\documentclass[reqno, 12pt]{amsart}

\usepackage{mathrsfs}
\usepackage{amscd}
\usepackage{amsmath}
\usepackage{latexsym}
\usepackage{amsfonts}
\usepackage{amssymb}
\usepackage{amsthm}
\usepackage{graphicx}
\usepackage{hyperref}
\usepackage{makecell}
\usepackage{array}
\usepackage{booktabs}
\usepackage{multirow}
\usepackage{color,xcolor}

\usepackage{graphicx}
\usepackage{subfigure}

\newtheorem{theorem}{Theorem}[section]
\newtheorem{proposition}[theorem]{Proposition}
\newtheorem{corollary}[theorem]{Corollary}
\newtheorem{lemma}[theorem]{Lemma}

\newtheorem{remark}[theorem]{Remark}

\numberwithin{equation}{section}

\title[Inverse source problem]{%
    \textbf{Reconstruct the ambient noise source from the multi-frequency sparse correlation data}}%

\author[H. Gu]{Hao Gu}
\address{School of Mathematical Sciences, Zhejiang University, Hangzhou 310058, China}
\email{guhao@zju.edu.cn}

 \author[H. Guo]{Hongxia Guo}
 \address{School of Mathematical Sciences, and Institute of Mathematics and Interdisciplinary Sciences, Tianjin Normal University, Tianjin, 300387, China}
 \email{hxguo@tjnu.edu.cn}

\author[X. Xu]{Xiang Xu}
\address{School of Mathematical Sciences, Zhejiang University, Hangzhou 310058, China}
\email{xxu@zju.edu.cn}

 \author[Y. Zhao]{Yue Zhao}
\address{School of Mathematics and Statistics, and Key Lab NAA-MOE, Central China Normal University,
Wuhan 430079, China}
\email{zhaoy@ccnu.edu.cn}

\subjclass[2010]{35R30, 78A46.}
\keywords{inverse source problem,  stochastic partial differential equation, factorization method, multi-frequency correlations}

\begin{document}

\begin{abstract}
In this paper, we develop a novel multi-frequency factorization method to reconstruct the spatial support of the ambient noise source.
The proposed method only requires sparse correlation data and has low computational cost. Numerical experiments in two  and three
dimensions are presented to demonstrate the effectiveness of the proposed method.
\end{abstract}

\maketitle

%
%
%

\section{Introduction}
Passive imaging with ambient noise aims at extracting information about unknown media or reflectors by analyzing wave fields generated by uncontrolled noise sources. A central idea of this approach is that, assuming the ambient noise is randomly distributed in space and statistically stationary in time, the cross correlation of recorded signals converges to the Green's function of the underlying wave operator. This principle forms the foundation of classical passive imaging and has led to successful applications in helioseismology, seismology, and related fields; see, for example, \cite{BGP, G_SIAP, GP_imag, GP}.

Inverse source problem concerns determining the radiating source from measurements of the wave fields away from its support, which is an important research subject in inverse scattering theory \cite{AG, zhou, blz, bao2016inverse}. In passive imaging, the stationary uncontrolled noise source is usually assumed to be mean-zero with
covariance of the form $F(t-s)K(x)\delta(x-y)$.
Using the passive imaging techniques, the inverse problem of determining
the spatial support of the noise source from the boundary cross correlation was addressed by \cite{AG}.
The strategy is to build imaging functionals which utilize the empirical cross correlation to qualitatively locate the noise source. The passive imaging methodology
relies on the assumption that the source is stationary in time. Under this assumption, the temporal stationarity leads to a diagonalization of the noise covariance operator in the frequency domain, allowing the time-domain wave equation to be decomposed into independent single-frequency Helmholtz equations. Based on this decoupling, Helmholtz--Kirchhoff identities can be employed to develop backpropagation-based imaging methods. For quantitative numerical methods for reconstructing the white noise random
sources of time-harmonic acoustic waves, we refer the reader to \cite{zhou, bao2016inverse}.

In some practical scenarios, the noise sources may be active only during finite time intervals or exhibit time-dependent intensities. Hence, the source is not stationary.
In this work, we consider the acoustic wave equation in $\mathbb{R}^d$, $d=2,3$,
\begin{equation*}
\begin{cases}
\partial_t^2 U(x,t) - \Delta U(x,t) = S(x,t), & (x,t)\in \mathbb{R}^d\times (0,\infty),\\
U(x,0)=\partial_t U(x,0)=0, & x\in\mathbb{R}^d,
\end{cases}
\end{equation*}
where $S(x,t)$ denotes a random noise source.
We assume that $S(x,t)$ is a mean-zero stochastic process
with covariance function
\begin{equation}\label{eq:cov1}
C(S(x,t)S(y,s))
= F(t-s)\, K(x,t)\,K(x,s)\,\delta(x-y),
\end{equation}
where the temporal correlation function $F(t-s)$ depends only on the time lag. $K(x,t)$ is a deterministic function describing the time-space support and intensity of the source. We assume that $K(x,t)$ has a compact support  $D\times[t_1,t_2]$, where $D\subset\mathbb{R}^d$ is a bounded Lipschitz domain  such that $\mathbb R^d\backslash D$ is connected and $0<t_1<t_2<\infty$. The time interval $[t_1,t_2]$ represents the radiating period of the time-dependent source and $D$ characterizes the time-invariant shape and location.


The goal of this paper is to reconstruct the spatial support of the random source from the correlation data. It should be noticed that,
although the temporal correlation function $F(t-s)$ is stationary, the time dependence of $K(x, t)$ implies that the random source is, in general, not stationary in time. As a consequence, the classical passive imaging theory based on temporal stationarity and Helmholtz--Kirchhoff identities may not be directly applicable. On the other hand, in many practical applications, it is required that the measurement points are limited.
For instance, only  a sparse network of seismometers are available to detect the location of the uncontrolled
earthquake \cite{GP}. Such a requirement naturally leads to inverse problems with limited
measurement positions.
To the best of our knowledge, inverse source problems driven by temporally non-stationary random sources remain largely
unexplored in the existing literature.

To address the inverse source problem and fulfill the requirement of using only limited measurement points, we employ the multi-frequency based factorization method.
The multi-frequency data can be achieved by Fourier transforming the time-domain recordings.
Specifically, letting
\[
u(x,k)=\int_{0}^{\infty} U(x,t)e^{\mathrm{i}kt}\,\mathrm{d}t,
\qquad
f(x,k)=\int_{t_1}^{t_2} S(x,t)e^{\mathrm{i}kt}\,\mathrm{d}t.
\]
We have that $u(\cdot,k)$ satisfies the inhomogeneous Helmholtz equation
\begin{equation}\label{eq:helm}
-\Delta u(x,k) - k^2 u(x,k) = f(x,k), \quad x\in\mathbb{R}^d.
\end{equation}
Notice that the resulting random source is frequency-dependent, and
the non-stationary temporal behavior of the time-domain source is encoded in the frequency dependence of its second-order statistics.

Let $G_d(x, t)$ be the fundamental solution to the wave equation with initial condition $G_d(x, t)=0$ for all $t<0$, where
$G_2(x, t) = \frac{H(t-|x|)}{2\pi\sqrt{t^2 - |x|^2}}$ with $H$ denoting the Heaviside function, and $G_3(x, t) = \frac{\delta(t-|x|)}{4\pi|x|}$.
The Fourier transform of the time-dependent Green function is given by
\[
\hat{G}_d(x, k) = \int_0^\infty G_d(x, t)e^{\mathrm{i}kt}{\rm d}t,
\]
which has the explicit form $\hat{G}_2(x, k) = \frac{i}{4}H_0^{(1)}(k|x|)$ and $\hat{G}_3(x, k) = \frac{e^{\mathrm{i}k|x|}}{4\pi|x|}$. Here $H_0^{(1)}$ is the Hankel function of the first kind
of order zero.
Therefore, the solution $u$ to the Helmholtz equation can be represented by
$$u(x,k) = \int_{\mathbb R^d}\hat{G}_d(x-y, k)f(y, k){\rm d}y.$$
Denoting $\mathbb{S}^{d-1}:=\{x\in\mathbb R^d: |x|=1\}$,
due to the asymptotic behavior of the Green function, the solution satisfies the Sommerfeld radiation condition
\begin{align}\label{src}
\lim_{r\to\infty}r^{\frac{d-1}{2}}(\partial_r u - iku) = 0, \quad r=|x|,
\end{align}
which holds uniformly in all directions $\hat{x} = x/|x|\in\mathbb S^{d-1}$.  Further,
$u$ admits the asymptotic behavior at infinity
\begin{align*}
u(x,k) = \frac{e^{\mathrm{i}k|x|}}{|x|^{\frac{d-1}{2}}}\left\{u^\infty(\hat{x},k) + \mathcal{O}(r^{-\frac{d+1}{2}})\right\}, \quad \text{as}\,\,|x|\to\infty.
\end{align*}
Here $u^\infty(\cdot,k)\in C^\infty(\mathbb S^{d-1})$ is the far-field pattern associated with the scattering problem \eqref{eq:helm}--\eqref{src} with explicit form
\begin{equation}\label{eq:far}
u^\infty(\hat{x},k)
=\int_{\mathbb{R}^d} e^{-\mathrm{i}k\hat{x}\cdot y} f(y,k)\,\mathrm{d}y=\int_{D}e^{-{\rm i} k\hat x\cdot y}\int_{t_1}^{t_2} S(y,t)e^{{\rm i}kt} {\rm d}t {\rm d}y.
\end{equation}
Under the setting in the frequency domain, the inverse problem is to determine the spatial support of $K(x,t)$, or equivalently the support of the random source $S(x,t)$, from multi-frequency far-field random data
\[
\{u^\infty(\hat{x}_j,k): k\in(k_{\min},k_{\max}),\; j=1,\dots,J\},
\]
where $\hat{x}_j\in\mathbb{S}^{d-1}$ are finitely many, possibly sparse, observation directions.

The factorization method was originally introduced by Kirsch
in the context of deterministic time-harmonic inverse scattering problems \cite{Kir98, Kir}.
Its multi-static formulation, based on far-field operators constructed from multi-incident
and multi-receiver data, provides a powerful non-iterative tool for reconstructing
geometric information about unknown scatterers or sources.
A multi-frequency factorization method
was proposed by Griesmaier \cite{Roland2017}, which allows one to recover the $\Theta-$convex polygon of the support from the multi-frequency far-field data over sparse observation directions.
However, in most existing literature, it is required that the scatterers are frequency-independent.
Recently, a novel factorization method has been developed in \cite{GH2024} which is able to deal with inverse source problems in the time domain.
Motivated by \cite{GH2024},  to locate the random source we aim to develop a multi-frequency factorization method without knowing the temporal correlation function $F$. Compared with its deterministic counterpart, the current inverse random source problem addresses a fundamentally different inverse problem with  space-time random sources and  exploits the expectation of the correlated
far-field pattern. The reconstruction is therefore performed at the covariance level rather than on single realizations, which requires a new operator formulation and a nontrivial extension
of the deterministic factorization framework.

To implement the factorization method, a far-field operator is constructed from the expectation of correlated far-field patterns,
which encodes the second-order statistical information of the random source.
Then we show that this correlation-based far-field operator
admits a symmetric factorization and satisfies a suitable range identity,
thereby enabling the reconstruction of the spatial support of the source.
A notable advantage of the associated multi-frequency factorization framework is that it requires measurements from only finitely many observation directions, in contrast to the passive imaging method used in \cite{AG} which relies on full-aperture boundary measurements.
Indeed, for a fixed observation direction,
we derive a criterion for characterizing the smallest strip
containing the source support and perpendicular to the observation direction.
Therefore, by combining data from several observation directions,
a convex polygonal approximation of the support can be recovered.
Our proposed imaging algorithm is non-iterative, computationally efficient and well-suited for sparse measurement configurations.


The remainder of this paper is organized as follows.
Section~2 introduces the far-field operator, presents its decomposition, and constructs the corresponding indicator function.
Section~3 validates the theoretical findings through numerical experiments in both two and three dimensions.
The factorization method using near-field random data is discussed in the Appendix.

\section{Factorization for far-field case}

We propose the factorization method based on the random multi-frequency far-field pattern.
As the noise source $S(x,t)$ is a mean-zero stochastic process, the far-field pattern $\eqref{eq:far}$ is also a mean-zero random field. Thus,
for each fixed observation direction $\hat x\in \mathbb{S}^{d-1}$, we introduce the covariance function
\begin{equation}\label{eq:cov}
     {C}(\hat x,\tau, k)={C}(u^{\infty}(\hat x, \tau+k), {u^{\infty}(\hat x, k)}):=\mathbb{E}(u^{\infty}(\hat x, \tau+k)\overline{u^{\infty}(\hat x, k)}),
\end{equation}
{which captures the accessible statistical information contained in the random multi-frequency far-field data.} Substituting the expression of far-field pattern \eqref{eq:far} into \eqref{eq:cov} yields
\begin{eqnarray}\label{eq:cov-g}
 &&{C}(\hat x,\tau, k)=\mathbb{E}(u^{\infty}(\hat x, \tau+k)\overline{u^{\infty}(\hat x, k)})\nonumber\\
&=&\int_{D} \int_{t_1}^{t_2}\int_{t_1}^{t_2} F(t-s)K(y,t)K(y,s)e^{{\rm i}(k+\tau)t}e^{-{\rm i} ks} \,{\rm d} t \,{\rm d}s \, e^{-{\rm i}\tau\hat x\cdot y}{\rm d}y\nonumber\\
&=&\int_{D} g(y,\tau,k)\, e^{-{\rm i}\tau\hat x\cdot y}{\rm d}y,
\end{eqnarray}
where the function $g(\cdot,\tau,k)$ is given by
\[
g(y,\tau,k)=\int_{t_1}^{t_2}\int_{t_1}^{t_2} F(t-s)K(y,t)K(y,s)e^{{\rm i}(k+\tau)t}e^{-{\rm i} ks} {\rm d}t {\rm d}s .
\]
The identity \eqref{eq:cov-g} provides the basis for defining a far-field correlation operator and developing a factorization framework for the inverse problem.
We are concerned with identifying the position and shape of the spatial  support $D$ of $K(x,t)$ from the data set of the covariance of the multi-frequency far-field data at sparse observation directions, which is
$$\{C(\hat x_j, \tau, k): \mbox{some fixed} \;k>0, \tau\in [\tau_{\min},\tau_{\max}],  j=1,2,...,J\}$$ for some $J\in \mathbb N$.

\subsection{Definition of far-field correlation operator and factorization}
Based on the covariance representation \eqref{eq:cov-g} and  the multi-frequency far-field operator proposed by \cite{Roland2017,GH2024}, we introduce the central frequency $\tau_c$ and half of the bandwidth of the given data as
\[
\tau_c:=\frac{\tau_{\min}+\tau_{\max}}{2},\quad \tau_w=\frac{\tau_{\max}-\tau_{\min}}{2}.
\]
For a given reference frequency $k=k_0>0$, we define a far-field correlation operator
\begin{align}\label{FO}
\mathcal F: L^2(0,\tau_w)\rightarrow L^2(0,\tau_w),
\end{align}
which is given by
\begin{eqnarray*}
    &&(\mathcal{F}\phi)(\tau):=(\mathcal{F}^{(\hat{x},k_0)}\phi)(\tau)   \\
    &=&\int_{0}^{\tau_w} C(\hat{x}, \tau_c+\tau-\gamma,k_0)\,\phi(\gamma)\,\mathrm{d}\gamma, \quad {\tau \in (0,\tau_w)}, \\
     &=&\int_{0}^{\tau_w} \int_D g(y,\tau_c+\tau-\gamma,k_0 )  e^{-{\rm i}(\tau_c+\tau-\gamma)\hat x\cdot y}\,\phi(\gamma){\rm d}y\,\mathrm{d}\gamma, \\
    &=& \int_{0}^{\tau_w } \int_{D} \int_{t_1}^{t_2}\int_{t_1}^{t_2} F(t-s)K(y,t)K(y,s)e^{{\rm i}(k_0+\tau_c+\tau-\gamma)t}e^{-{\rm i} k_0s} \, {\rm d} t {\rm d}s\, e^{-{\rm i}(\tau_c+\tau-\gamma)\hat x\cdot y}{\rm d}y\,\phi(\gamma)\,{\rm d}\gamma.
\end{eqnarray*}

We proceed to investigate the symmetry properties of the far-field operator $\mathcal F$. Under the covariance-based formulation introduced above, the operator can be decomposed in a symmetric manner, which reveals its intrinsic self-adjoint structure and enables a rigorous factorization analysis. We state and prove a factorization result for the far-field operator in the following proposition.
\begin{proposition}\label{Fac-F}
It holds $\mathcal{F}=\mathcal{LTL^*}$, where $\mathcal{L}=\mathcal{L}_D^{(\hat{x},k_0)}:  L^2(D\times[t_1,t_2])\rightarrow  L^2(0,\tau_w)$ is defined by
\begin{eqnarray} \label{eqn:L}
    (\mathcal{L}u)(\tau)=\int_{t_1}^{t_2}\int_D e^{-{\rm i} \tau (\hat{x}\cdot y-t)} u(y,t)\,\mathrm{d}y \mathrm{d}t,\qquad \tau\in (0, \tau_w ),
\end{eqnarray}
for all $u\in L^2(D\times[t_1,t_2])$, and
$\mathcal{T}: L^2(D\times[t_1,t_2]) \rightarrow L^2(D\times[t_1,t_2])$ is the following  multiplication operator
\begin{eqnarray}
    (\mathcal{T}\psi)(y,t):=\int_{t_1}^{t_2} F(t-s)K(y,t)K(y,s)e^{{\rm i} ((k_0+\tau_c) t-k_0 s)} \, {\rm d}s\, e^{-\mathrm{i}\tau_c \hat x \cdot y}\, \psi(y,t).   \nonumber
\end{eqnarray}
Furthermore, the operator $\mathcal L: L^2(D\times[t_1,t_2])\rightarrow L^2(0,\tau_w)$ is compact with a dense range.
\end{proposition}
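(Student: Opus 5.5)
First compute the adjoint of $\mathcal L$. For $\phi\in L^2(0,\tau_w)$ and $u\in L^2(D\times[t_1,t_2])$, Fubini gives
\[
(\mathcal L^*\phi)(y,t)=\int_0^{\tau_w} e^{{\rm i}\gamma(\hat x\cdot y-t)}\phi(\gamma)\,{\rm d}\gamma .
\]
Next apply the multiplication operator $\mathcal T$. Then apply $\mathcal L$, with the $(y,t)$-integration placed inside. Write $m(y,t)=\int_{t_1}^{t_2}F(t-s)K(y,t)K(y,s)e^{{\rm i}((k_0+\tau_c)t-k_0s)}\,{\rm d}s$. This gives
\[
(\mathcal{LTL}^*\phi)(\tau)=\int_0^{\tau_w}\int_{t_1}^{t_2}\int_D e^{-{\rm i}\tau(\hat x\cdot y-t)}\,m(y,t)\,e^{-{\rm i}\tau_c\hat x\cdot y}\,e^{{\rm i}\gamma(\hat x\cdot y-t)}\,{\rm d}y\,{\rm d}t\,\phi(\gamma)\,{\rm d}\gamma .
\]
Now collect the exponentials. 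In $y$ they combine to $e^{-{\rm i}(\tau_c+\tau-\gamma)\hat x\cdot y}$. In $t$ they combine to $e^{{\rm i}(k_0+\tau_c+\tau-\gamma)t}e^{-{\rm i}k_0 s}$. This matches exactly the last line of the definition of $\mathcal F$, so $\mathcal F=\mathcal{LTL}^*$.

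To justify the interchanges and the boundedness of $\mathcal T$, I will assume $F$ is bounded (or at least locally integrable) and $K\in L^\infty$. Then $m\in L^\infty(D\times[t_1,t_2])$, so $\mathcal T$ is a bounded multiplication operator. All integrands are integrable on bounded sets, so Fubini applies.

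For compactness, observe that $\mathcal L$ is an integral operator. Its kernel $e^{-{\rm i}\tau(\hat x\cdot y-t)}$ is continuous and bounded on the bounded set $(0,\tau_w)\times D\times[t_1,t_2]$. It therefore lies in $L^2$ of the product, so $\mathcal L$ is Hilbert--Schmidt and hence compact.

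Density of the range is the main step. It is equivalent to injectivity of $\mathcal L^*$. Suppose $\int_0^{\tau_w}e^{{\rm i}\gamma(\hat x\cdot y-t)}\phi(\gamma)\,{\rm d}\gamma=0$ for a.e.\ $(y,t)\in D\times[t_1,t_2]$. By continuity in $(y,t)$, this holds on the closure. The map $(y,t)\mapsto \hat x\cdot y-t$ sends the open set $D\times(t_1,t_2)$ onto an open interval $I\subset\mathbb R$.

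Extend $\phi$ by zero outside $(0,\tau_w)$. Its Fourier transform $\Phi(z)=\int_0^{\tau_w}e^{{\rm i}\gamma z}\phi(\gamma)\,{\rm d}\gamma$ is entire in $z$, by the Paley--Wiener theorem, since $\phi$ has compact support. Because $\Phi$ vanishes on the interval $I$, the identity theorem gives $\Phi\equiv0$. Fourier injectivity on $L^2(\mathbb R)$ then gives $\phi=0$.

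The only delicate point is making sure $I$ is a nondegenerate interval. This holds because $t$ ranges over $(t_1,t_2)$ with $t_1<t_2$, so $\hat x\cdot y-t$ varies over an open set regardless of $D$.
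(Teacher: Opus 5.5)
Your proposal is correct and follows the paper's proof. You use the same adjoint formula for $\mathcal L^*$ and the same collection of exponentials to identify $\mathcal{LTL}^*$ with $\mathcal F$. For compactness and dense range the paper only cites \cite[Lemma 2.1]{GH2024}, whereas you give the standard self-contained argument: a Hilbert--Schmidt kernel, plus injectivity of $\mathcal L^*$ via analyticity of the Fourier--Laplace transform of $\phi$ on the nondegenerate interval swept out by $\hat x\cdot y-t$. Your boundedness assumption on $F$ and $K$ makes explicit what the paper tacitly needs for $\mathcal T$ to be bounded.
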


\begin{proof}
 We first show that the adjoint operator $\mathcal{L}^*: L^2(0,\tau_w) \rightarrow L^2(D\times[t_1,t_2])$ of $\mathcal{L}$ can be
 expressed by
 \begin{eqnarray}\label{eqn:L*}
    (\mathcal{L^*}\phi)(y,t):=\int_0^{\tau_w} e^{{\rm i} \gamma (\hat{x}\cdot y-t)} \phi(\gamma)\,{\rm d}\gamma.
\end{eqnarray}
Indeed, for $\psi\in L^2(D\times[t_1,t_2])$ and $\phi \in L^2(0,\tau_w)$, it holds that
\begin{align*}
\langle \mathcal Lu, \phi \rangle_{L^2(0,\tau_w)}&= \int_{0}^{\tau_w}\left(  \int_{t_1}^{t_2}\int_D e^{-{\rm i} \gamma (\hat{x}\cdot y-t)} u(y, t)\,{\rm d}y\,{\rm d}t\right)\overline{\phi(\gamma)} {\rm d}\gamma \\
&= \int_{t_1}^{t_2} \int_D u(y, t) \left(\int_{0}^{\tau_w}  \overline{ e^{{\rm i}\gamma(\hat{x}\cdot y-t)}\,\phi(\gamma)} {\rm d}\gamma\right)\,{\rm d}y\,{\rm d}t \\
&=\langle u, \mathcal{L}^* \phi\rangle_{L^2(D\times[t_1,t_2])},
\end{align*}
which implies \eqref{eqn:L*}.  By the definition of $\mathcal{T}$,
\begin{equation*}
(\mathcal{TL}^*\phi)(y,t):=\int_{t_1}^{t_2} F(t-s)K(y,t)K(y,s)e^{{\rm i} ((k_0 +\tau_c)t-k_0 s)} \, {\rm d}s\, e^{-\mathrm{i}\tau_c\hat x \cdot y}\int_0^{\tau_w} e^{{\rm i} \gamma (\hat{x}\cdot y-t)} \phi(\gamma)\,{\rm d}\gamma.
\end{equation*}
Hence, combining \eqref{eqn:L} and  the definition of far-field operator \eqref{FO} yields
\begin{align*}
&(\mathcal{LTL}^*\phi)(\tau)\\
=&\int_{t_1}^{t_2}\int_D e^{-{\rm i} \tau (\hat{x}\cdot y-t)} \left( \int_{t_1}^{t_2} F(t-s)K(y,t)K(y,s)e^{{\rm i} ((k_0 +\tau_c)t-k_0 s)} {\rm d}s\, e^{-\mathrm{i}\tau_c\hat x \cdot y}\int_0^{\tau_w} e^{{\rm i} \gamma (\hat{x}\cdot y-t)} \phi(\gamma)\,{\rm d}\gamma  \right)\,\mathrm{d}y \mathrm dt \\
=&\int_{0}^{{\tau_w} } \int_{D} \int_{t_1}^{t_2}\int_{t_1}^{t_2} F(t-s)K(y,t)K(y,s)e^{{\rm i}(k_0+\tau_c+\tau-\gamma)t}e^{-{\rm i} k_0 s} \, {\rm d} t {\rm d}s\, e^{-{\rm i}(\tau_c+\tau-\gamma)\hat x\cdot y}{\rm d}y\,\phi(\gamma)\,{\rm d}\gamma\\
=&(\mathcal{F}\phi)(\tau).
\end{align*}
The compactness and density properties of the operator $\mathcal L$
 are established following the arguments for \cite[Lemma 2.1]{GH2024}. The proof is complete.
\end{proof}

It is noteworthy that the operator $\mathcal L$ maps a time-spatial-dependent source function supported on $D\times[t_1,t_2]$ to the multiple-frequency far-field correlation data at the far-field observation direction $\hat x$. It plays the same role as the data-to-pattern operator for multi-static inverse scattering problems at a fixed energy \cite[Chapter 1.4]{Kir}.  To establish a rigorous connection between the range of the far-field operator $\mathcal{F}$ and that of $\mathcal{L}$, we introduce the self-adjoint and positive-definite operator $\mathcal{F}_{\#} := |\operatorname{Re}\mathcal{F}| + |\operatorname{Im}\mathcal{F}|$. Here, the real and imaginary parts are defined as $\operatorname{Re}\mathcal{F} = (\mathcal{F} + \mathcal{F}^*)/2$ and $\operatorname{Im}\mathcal{F} = (\mathcal{F} - \mathcal{F}^*)/2$, respectively, ensuring that the square root $\mathcal{F}_{\#}^{1/2}$ is well-defined.

To facilitate the subsequent analysis of the operator's properties, we introduce the following  hypotheses:
\begin{itemize}
\item Hypothesis (H1): The temporal correlation function $F(t)$ is continuous and satisfies $F(t)>0$ for all $t\geq0$.
    \item Hypothesis (H2): $|K(x,t)|>0$ a.e. on $ D\times[t_1, t_2]$ and  the nodal set of $K$ has the Lebesgue measure of zero. Furthermore, for almost every given $x \in D$, the mapping $t \mapsto K(x,t)$ does not change sign on the interval $[t_1, t_2]$.
    \item Hypothesis (H3): The wave number $k_0 \ge 0$ and the time window length $ t_2 - t_1$ satisfy the following inequality:$$k_0 (t_2 - t_1) \le \pi - \delta_0,$$where $\delta_0 \in (0, \pi]$ is a positive constant independent of time and space.
\end{itemize}

\begin{remark}
Hypothesis (H3) possesses a clear physical interpretation. For a wave with wave number $k_0$, the corresponding temporal period is given by $T = \frac{2\pi}{k_0}$. Therefore, the condition $k_0(t_2-t_1) < \pi$ is strictly equivalent to $(t_2-t_1) < \frac{T}{2}$. Physically, this signifies that the signal radiating period must be less than half of the wave period.
\end{remark}

The following proposition from \cite[Theorem 2.2]{GH2024} is useful in the subsequent analysis of the factorization.
\begin{proposition}\label{app:range}
Let $X$ and $Y$ be Hilbert spaces  and let $F: Y\rightarrow Y$, $L: X\rightarrow Y$, and $\mathcal{T}: X\rightarrow X$ be bounded linear operators such that $F=L\mathcal{T}L^*$. We make the following assumptions
\begin{itemize}
\item[(i)] $L$ is compact with dense range and thus $L^*$ is compact and one-to-one.
\item[(ii)] $\operatorname{Re} \mathcal{T}$ and $\operatorname{Im} \mathcal{T}$ are both one-to-one and the operator $\mathcal{T}_{\#}=|\operatorname{Re}\mathcal{T}| +|\operatorname{Im}\mathcal{T}|: X\rightarrow X$ is coercive, i.e., there exists $c>0$ with
\[
\big\langle \mathcal{T}_{\#}\, \varphi, \varphi\big\rangle_X\geq c\,||\varphi||_X^2\quad\mbox{for all}\quad \varphi\in X.
\]
\end{itemize}
Then the operator $F_{\#}$ is positive and  the ranges of $F_{\#}^{1/2}:Y\rightarrow Y$ and  $L: X\rightarrow Y$ coincide.
\end{proposition}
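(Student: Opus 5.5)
This statement is the abstract range identity quoted from \cite[Theorem 2.2]{GH2024}. I would prove it in the style of Kirsch's $F_\#$ range identity, in three steps: construct a factorization of $F_\#$, compare the two factorizations, and read off the range equality.

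\emph{Step 1: a factorization of $F_\#$.} From $F=L\mathcal T L^*$ we get
\[
\operatorname{Re}F=L(\operatorname{Re}\mathcal T)L^*,\qquad \operatorname{Im}F=L(\operatorname{Im}\mathcal T)L^*,
\]
and both are compact and self-adjoint because $L$ is compact. Take the spectral decomposition of $\operatorname{Re}F$ and $\operatorname{Im}F$. Using the polar decomposition $|\operatorname{Re}\mathcal T|=\operatorname{Re}\mathcal T\,\operatorname{sign}(\operatorname{Re}\mathcal T)$, one shows, as in Kirsch's lemma (Kirsch--Grinberg, Thm.~2.15), that
\[
|\operatorname{Re}F|=L\,R\,L^*\quad\text{and}\quad |\operatorname{Im}F|=L\,J\,L^*
\]
for some bounded operators $R$ and $J$ on $X$. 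The natural choice is $R=\operatorname{Re}\mathcal T$ composed with a sign-type operator built from the eigenprojections of $\operatorname{Re}F$.

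The key technical point is the following. The spectral projections of $\operatorname{Re}F$ onto its negative eigenspace have finite rank, or they are handled through the injectivity of $\operatorname{Re}\mathcal T$ and of $L^*$. Because of this, $R$ differs from $|\operatorname{Re}\mathcal T|$ only by a compact operator. The same holds for $J$ and $|\operatorname{Im}\mathcal T|$. Adding the two gives
\[
F_\#=L\,\mathcal T'\,L^*,\qquad \mathcal T'=\mathcal T_\#+\mathcal K,
\]
with $\mathcal K$ compact.

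\emph{Step 2: coercivity of $\mathcal T'$.} $\mathcal T'$ is self-adjoint. Since $F_\#\ge 0$, we have $\langle \mathcal T'L^*\psi,L^*\psi\rangle\ge 0$, and $L^*$ has dense range in $\overline{\mathcal R(L^*)}$. Moreover $\mathcal T'$ is a compact perturbation of the coercive operator $\mathcal T_\#$. A standard Fredholm contradiction argument then gives coercivity: assume a normalized sequence $\varphi_n$ with $\langle\mathcal T'\varphi_n,\varphi_n\rangle\to0$, extract a weakly convergent subsequence, and use the compactness of $\mathcal K$ together with the coercivity of $\mathcal T_\#$. This forces a nonzero limit $\varphi$ with $\mathcal T'\varphi=0$ in a suitable sense. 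Then use the injectivity of $\operatorname{Re}\mathcal T$ and $\operatorname{Im}\mathcal T$ to reach a contradiction. The conclusion is that $\mathcal T'$ is coercive on $X$.

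This is the main obstacle. The delicate part is showing that the compact correction cannot create a kernel, and this is exactly where the one-to-one assumptions in (ii) enter.

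\emph{Step 3: the range identity.} Once $F_\#=L\mathcal T'L^*$ with $\mathcal T'$ self-adjoint and coercive, $F_\#$ is positive: $\langle F_\#\psi,\psi\rangle\ge c\|L^*\psi\|^2>0$ for $\psi\ne0$, because $L^*$ is injective. Next write $\mathcal T'=(\mathcal T'^{1/2})^2$ with $\mathcal T'^{1/2}$ an isomorphism. Then
\[
F_\#=(L\mathcal T'^{1/2})(L\mathcal T'^{1/2})^*=F_\#^{1/2}(F_\#^{1/2})^*.
\]
The standard lemma that $AA^*=BB^*$ implies $\mathcal R(A)=\mathcal R(B)$ (Kirsch--Grinberg, Lemma~1.21) gives
\[
\mathcal R(F_\#^{1/2})=\mathcal R(L\mathcal T'^{1/2})=\mathcal R(L),
\]
where the last equality uses that $\mathcal T'^{1/2}$ is onto.
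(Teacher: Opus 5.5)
\paragraph{Verdict.} The statement as quoted is false, so your proof cannot close, and the gap lies in Steps 1 and 2. The paper does not prove this proposition; it only cites \cite[Theorem 2.2]{GH2024}.

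\paragraph{Counterexample.} Take $X=\mathbb C^2$, $Y=\mathbb C$, $L(a,b)=a+b$ and $\mathcal T=(1+\mathrm i)\,\mathrm{diag}(1,-1)$.
\begin{itemize}
\item $L$ is compact and onto, and $L^*y=(y,y)$ is injective.
\item $\operatorname{Re}\mathcal T=\mathrm{diag}(1,-1)$ is injective. $\operatorname{Im}\mathcal T=\mathrm{diag}(1,-1)$, or $\mathrm i\,\mathrm{diag}(1,-1)$ with the paper's convention, is also injective.
\item $\mathcal T_\#=2I$ is coercive.
\end{itemize}
Yet $F=L\mathcal TL^*=(1+\mathrm i)(1-1)=0$. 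So $F_\#=0$ is not positive, and $\operatorname{Range}(F_\#^{1/2})=\{0\}\neq\operatorname{Range}(L)$. An infinite-dimensional version uses $X=\ell^2\oplus\ell^2$, $Y=\ell^2$, $L(a,b)=L_1(a+b)$ with $L_1=\mathrm{diag}(1/n)$, and the same $\mathcal T$ acting blockwise.

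\paragraph{Where Step 1 fails.} The failure is at your ``key technical point'' that the negative spectral part of $\operatorname{Re}F$ is finite rank.
\begin{itemize}
\item In Kirsch--Grinberg's Theorem 2.15 this holds only because $\operatorname{Re}\mathcal T=\mathcal T_0+\mathcal C$ with $\mathcal T_0$ coercive and $\mathcal C$ compact. That makes $\langle\operatorname{Re}F\psi,\psi\rangle=\langle(\operatorname{Re}\mathcal T)L^*\psi,L^*\psi\rangle\ge0$ on a subspace of finite codimension.
\item No such structure is assumed here. $\operatorname{Re}\mathcal T$ may be genuinely indefinite, with infinite-dimensional positive and negative spectral subspaces. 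The real part of the paper's multiplier contains $\cos((k_0+\tau_c)t-k_0s-\tau_c\hat x\cdot y)$, which in general changes sign.
\item For indefinite self-adjoint $B$, the operators $|LBL^*|$ and $L|B|L^*$ need not agree even modulo compacts. The positive and negative parts of $B$ can cancel once sandwiched between $L$ and $L^*$.
\item In the $\ell^2$ example, $|\operatorname{Re}F|=0$ while $|\operatorname{Re}\mathcal T|=I$. For any compact $\mathcal K$ one computes $L(I+\mathcal K)L^*=L_1(2I+M)L_1^*$ with $M$ compact. This can vanish only if $M=-2I$, which is impossible in infinite dimensions.
\end{itemize}
Injectivity of $\operatorname{Re}\mathcal T$ and of $L^*$ does not replace the compact-perturbation structure.

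\paragraph{Where Step 2 fails.} Kirsch's contradiction argument relies on the sign condition $\operatorname{Im}\mathcal T\ge0$, which is absent here. It also only works on $\overline{\operatorname{Range}(L^*)}=\mathcal N(L)^\perp$, which is not all of $X$ unless $L$ is injective. Knowing separately that $\operatorname{Re}\mathcal T$ and $\operatorname{Im}\mathcal T$ are injective gives no control over the kernel of $R+J$.

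\paragraph{What would be true.} Suppose one additionally assumes that $\operatorname{Re}\mathcal T$ and $\operatorname{Im}\mathcal T$ (with the $1/(2\mathrm i)$ convention) each have a fixed sign. Then $|\operatorname{Re}F|=L|\operatorname{Re}\mathcal T|L^*$ and $|\operatorname{Im}F|=L|\operatorname{Im}\mathcal T|L^*$ hold exactly. Hence $F_\#=L\mathcal T_\#L^*$ with $\mathcal T_\#$ self-adjoint and coercive, and your Step 3 finishes the proof verbatim. Alternatively, one can impose the full Kirsch--Grinberg hypotheses. Under the hypotheses as stated, the proposition needs to be corrected rather than proved.
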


We are now in a position to establish a fundamental range identity between the far-field operator $\mathcal F$ and the data-to-pattern operator $\mathcal L$, which forms the analytical basis of the indicator functions introduced in the subsequent section.

\begin{theorem}\label{thm:range}
There holds the range identity
\[
\operatorname{Range}\big(\mathcal F_\#^{1/2}\big)
=
\operatorname{Range}(\mathcal L).
\]
\end{theorem}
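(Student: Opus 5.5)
The plan is to obtain the statement as a direct application of Proposition~\ref{app:range}. We take $X=L^2(D\times[t_1,t_2])$, $Y=L^2(0,\tau_w)$, $F=\mathcal F$, $L=\mathcal L$, and $\mathcal T$ as in Proposition~\ref{Fac-F}.

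Proposition~\ref{Fac-F} already supplies two ingredients. It gives the factorization $\mathcal F=\mathcal L\mathcal T\mathcal L^*$, and it gives that $\mathcal L$ is compact with dense range. Hence assumption (i) holds, and $\mathcal L^*$ is compact and injective. Everything therefore reduces to checking assumption (ii) for the middle operator $\mathcal T$.

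The key observation is that $\mathcal T$ is a genuine multiplication operator, $(\mathcal T\psi)(y,t)=m(y,t)\psi(y,t)$, with symbol
\[
m(y,t)=e^{-\mathrm i\tau_c\hat x\cdot y}\,e^{\mathrm i(k_0+\tau_c)t}\,K(y,t)\,I(y,t),\qquad I(y,t):=\int_{t_1}^{t_2}F(t-s)K(y,s)e^{-\mathrm i k_0 s}\,\mathrm ds .
\]
Consequently $\operatorname{Re}\mathcal T$ and $\operatorname{Im}\mathcal T$ are multiplication by $\operatorname{Re}m$ and $\operatorname{Im}m$. Moreover $\mathcal T_\#$ is multiplication by $|\operatorname{Re}m|+|\operatorname{Im}m|$, which is at least $|m|$.

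I would therefore first bound $|I(y,t)|$ from below using (H1)--(H3). Multiply $I$ by the unimodular factor $e^{\mathrm i k_0(t_1+t_2)/2}$. The phase $k_0\big(\tfrac{t_1+t_2}{2}-s\big)$ then lies in $[-\tfrac{\pi-\delta_0}{2},\tfrac{\pi-\delta_0}{2}]$ for $s\in[t_1,t_2]$, by (H3). By (H2), $K(y,\cdot)$ has a fixed sign, and by (H1), $F>0$. Taking real parts then gives
\[
|I(y,t)|\ \ge\ \sin(\delta_0/2)\int_{t_1}^{t_2}F(t-s)\,|K(y,s)|\,\mathrm ds
\ \ge\ \sin(\delta_0/2)\,\min_{|r|\le t_2-t_1}F(r)\,\|K(y,\cdot)\|_{L^1(t_1,t_2)} .
\]
Here the minimum of $F$ is positive by continuity and (H1) on a compact interval. (The hypothesis states positivity for $t\ge0$; for negative lags I would use the evenness of a real covariance, $F(-r)=F(r)$.) Combining this with $|K|>0$ a.e. gives $m\neq0$ a.e., and hence a positive lower bound for $|m|$. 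That lower bound yields coercivity of $\mathcal T_\#$:
\[
\langle\mathcal T_\#\varphi,\varphi\rangle\ \ge\ \operatorname*{ess\,inf}|m|\;\|\varphi\|^2 .
\]

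Injectivity of $\operatorname{Re}\mathcal T$ and $\operatorname{Im}\mathcal T$ amounts to showing that $\{\operatorname{Re}m=0\}$ and $\{\operatorname{Im}m=0\}$ are Lebesgue null sets. Write $m=|m|e^{\mathrm i\theta(y,t)}$, where
\[
\theta(y,t)=(k_0+\tau_c)t-\tau_c\hat x\cdot y+\arg I(y,t)+\arg K(y,t).
\]
These sets are contained in the nodal set of $K$ (null by (H2)) together with the level sets $\{\theta\in\tfrac\pi2\mathbb Z\}$. I would show the latter are null by noting that, for fixed $t$ and $y$ moving along $\hat x$, the term $-\tau_c\hat x\cdot y$ varies linearly. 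Applying Fubini along lines parallel to $\hat x$, the level sets then meet almost every such line in a null set. This needs $\tau_c\neq0$ and some control of $\arg I$ in $y$; I would try to get that control from continuity of $F$.

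The main obstacle is the coercivity step. It requires $\operatorname*{ess\,inf}|m|>0$, which in turn needs $|K|$ bounded away from zero on $D\times[t_1,t_2]$, whereas (H2) literally only gives $|K|>0$ a.e. I would handle this by reading (H2) as providing a uniform positive lower bound for $|K|$ (together with boundedness of $K$, so that $\mathcal T$ is bounded). With that reading, the lower bound for $|I|$ above makes $|m|\ge c>0$, and Proposition~\ref{app:range} yields $\operatorname{Range}(\mathcal F_\#^{1/2})=\operatorname{Range}(\mathcal L)$.
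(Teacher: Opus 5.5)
\textbf{Gap: the proposal strengthens (H2).} Your coercivity estimate is the one the paper uses: the shift by $s_m=(t_1+t_2)/2$, $\cos\ge\sin(\delta_0/2)$ from (H3), the fixed sign of $K(y,\cdot)$ from (H2), $F\ge F_{\min}$ from (H1), and $|\operatorname{Re}m|+|\operatorname{Im}m|\ge|m|$. Your use of the evenness of $F$ for negative lags also repairs a slip in the paper, which only uses $F$ on $[0,t_2-t_1]$. The gap is the step you flag yourself. (H2) gives only $|K|>0$ a.e., so $\operatorname{ess\,inf}|m|$ can be $0$ and $\mathcal T_\#$ need not be coercive on $L^2(D\times[t_1,t_2])$. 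Re-reading (H2) as a uniform lower bound does not close this gap. It proves the theorem under a strictly stronger hypothesis, one that excludes, for example, $K=(x_1^2+x_2^2)(t+1)$ and $K=x_1(t+1)$ from the numerical section.

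\textbf{How the paper handles it.} The missing idea is the paper's exhaustion argument, which has four steps.
\begin{enumerate}
\item Remove an $\epsilon$-neighbourhood $Y_\epsilon$ of the nodal set of $K$, so that $|K|\ge c_\epsilon>0$ on $(D\setminus\overline{Y_\epsilon})\times[t_1,t_2]$.
\item Factor the truncated operator as $\mathcal F_\epsilon=\mathcal L_\epsilon\mathcal T_\epsilon\mathcal L_\epsilon^*$, exactly as in Proposition~\ref{Fac-F}.
\item Run your estimate there to get $\langle\mathcal T_{\epsilon,\#}\psi,\psi\rangle\ge F_{\min}c_\epsilon^2\sin(\delta_0/2)(t_2-t_1)\|\psi\|^2$. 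Proposition~\ref{app:range} then gives $\operatorname{Range}(\mathcal F_{\epsilon,\#}^{1/2})=\operatorname{Range}(\mathcal L_\epsilon)$.
\item Let $\epsilon\to0$, using $\|\mathcal F_\epsilon-\mathcal F\|\to0$.
\end{enumerate}

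\textbf{Two caveats.} First, the paper's final limit step is only asserted, by appeal to an outside argument, not proved. Norm convergence of $\mathcal F_\epsilon$ does not by itself carry an equality of non-closed ranges over to the limit. If you follow this route you must supply that argument; otherwise you should state the uniform bound on $|K|$ openly as an extra hypothesis.

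Second, your injectivity argument for $\operatorname{Re}\mathcal T$ and $\operatorname{Im}\mathcal T$ is not yet a proof.
\begin{itemize}
\item The Fubini argument along lines parallel to $\hat x$ needs $\tau_c\neq0$. This is not assumed, and the numerical section even works with $\tau_c=0$.
\item Continuity of $F$ cannot give you control of $\arg I$ in $y$. The $y$-dependence of $I$ and of $\arg K$ enters only through $K(y,\cdot)$, about which nothing beyond measurability is assumed.
\end{itemize}
The paper itself verifies only coercivity before invoking Proposition~\ref{app:range} and never checks this injectivity hypothesis. On this point you are more careful than the paper, but the step remains open in both.
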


\begin{proof}
    We introduce the nodal set of $K$ and its $\epsilon-$neighborhood by
    \[Y:=\{x\in \overline D: K(x,t)=0\},\; Y_\epsilon:=\bigcup_{y\in Y} B_\epsilon(y,t),\]
    where $\epsilon>0$ is sufficiently small. We have $D\backslash \overline Y_{\epsilon}\neq\emptyset$ by the assumption $|K(x,t)|>0, (x,t)\in\overline D\times[t_1,t_2]$ almost surely and $|K(y,t)|\ge c_\epsilon >0$ for all $(y,t)\in D\backslash \overline Y_{\epsilon}\times[t_1, t_2]$.
Now we define the operator $\mathcal{F}_\epsilon: L^2(0,\tau_w)\rightarrow L^2(0,\tau_w)$ in the same way as $\mathcal F$ but over the domain $D\backslash \overline Y_{\epsilon}$ by
\begin{eqnarray*}
    &&(\mathcal{F}_{\epsilon}\phi)(\tau)\\
    &=&
    \int_{0}^{\tau_w} \int_{D\backslash\overline Y_{\epsilon}} \int_{t_1}^{t_2}\int_{t_1}^{t_2} F(t-s)K(y,t)K(y,s)e^{{\rm i}(k_0+\tau_c+\tau-\gamma)t}e^{-{\rm i} ks} \, {\rm d} t {\rm d}s\, e^{-{\rm i}(\tau_c+\tau-\gamma)\hat x\cdot y}{\rm d}y\,\phi(\gamma)\,{\rm d}\gamma.
\end{eqnarray*}
With this definition, $\mathcal F_\epsilon$ also admits the following symmetric factorization analogous to Proposition \ref{Fac-F}. Thus, we have $\mathcal{F}_\epsilon=\mathcal{L_\epsilon T_\epsilon L_\epsilon^*}$, where $\mathcal{L}_\epsilon:  L^2((D\backslash \overline Y_\epsilon )\times[t_1,t_2])\rightarrow  L^2(0,\tau_w)$ is defined by
\begin{eqnarray*}
    (\mathcal{L}_\epsilon u)(\tau)=\int_{t_1}^{t_2}\int_{D\backslash \overline Y_\epsilon } e^{-{\rm i} \tau (\hat{x}\cdot y-t)} u(y,t)\,\mathrm{d}y \mathrm{d}t,\qquad \tau\in (0, \tau_w),
\end{eqnarray*}
for all $u\in L^2((D\backslash \overline Y_\epsilon )\times[t_1,t_2])$, and
$\mathcal{T}_\epsilon: L^2((D\backslash \overline Y_\epsilon )\times[t_1,t_2]) \rightarrow L^2((D\backslash \overline Y_\epsilon )\times[t_1,t_2])$ is a  multiplication operator defined by
\begin{eqnarray}
    (\mathcal{T}_\epsilon \psi)(y,t):=\int_{t_1}^{t_2} F(t-s)K(y,t)K(y,s)e^{{\rm i} ((k_0 +\tau_c)t-k_0 s)} \, {\rm d}s\, e^{-\mathrm{i}\tau_c\hat x \cdot y}\, \psi(y,t),   \nonumber
\end{eqnarray}
which is restricted to $L^2((D\backslash \overline Y_\epsilon )\times[t_1,t_2])$.

We claim that the operator $\mathcal T_{\epsilon,\#}$ is coercive for any $\epsilon>0$. In fact,  the real and imaginary parts of  $\mathcal T_\epsilon$ are given by
\begin{align*}
\operatorname{Re}(\mathcal T_\epsilon) \psi(y,t) &= \int_{t_1}^{t_2} F(t-s)\, K(y,t)\, K(y,s)\, \cos((k_0+\tau_c)t-k_0s-\tau_c \hat x \cdot y) \, \mathrm{d}s \; \psi(y,t), \\
\operatorname{Im}(\mathcal T_\epsilon) \psi(y,t) &= \int_{t_1}^{t_2} F(t-s)\, K(y,t)\, K(y,s)\, \sin((k_0+\tau_c)t-k_0s-\tau_c \hat x \cdot y) \, \mathrm{d}s \; \psi(y,t).
\end{align*}
Hence, for any $\psi \in L^2((D \setminus \overline{Y}_\epsilon)\times [t_1,t_2])$,
\[
\begin{aligned}
\langle \mathcal T_{\epsilon,\#}\psi,\psi\rangle
&= \int_{D \setminus \overline{Y}_\epsilon} \int_{t_1}^{t_2}
\Big( |\operatorname{Re}(\mathcal T_\epsilon)(y,t)|
      + |\operatorname{Im}(\mathcal T_\epsilon)(y,t)| \Big)
      |\psi(y,t)|^2 \, \mathrm{d}t\,\mathrm{d}y .
\end{aligned}
\]
Recalling that $\mathcal{T}_\epsilon$ acts as a multiplication operator, and applying the triangle-type inequality $|a| + |b| \ge |a + \mathrm{i}b|$ for $a, b \in \mathbb{R}$, we obtain:$$|\operatorname{Re}(\mathcal T_\epsilon)(y,t)| + |\operatorname{Im}(\mathcal T_\epsilon)(y,t)| \ge |\mathcal T_\epsilon(y,t)|.
$$
By Hypothesis (H2), $K(y,s)$ does not change sign for $s \in [t_1, t_2]$,  we obtain
\[
\begin{aligned}|\mathcal T_\epsilon(y,t)| &= |K(y,t)| \cdot \left| \int_{t_1}^{t_2} F(t-s)K(y,s)e^{-{\rm i} k_0 s} \, {\rm d}s \right|\\
& = |K(y,t)| \cdot \left| \int_{t_1}^{t_2} F(t-s)|K(y,s)|e^{-{\rm i} k_0 s} \, {\rm d}s \right| \\
&\ge c_\epsilon \left| \int_{t_1}^{t_2} F(t-s)|K(y,s)|e^{-{\rm i} k_0 s} \, {\rm d}s \right|,\\
&= c_\epsilon \left| \int_{t_1}^{t_2} F(t-s)|K(y,s)|e^{-{\rm i} k_0( s-s_m)} \, {\rm d}s \right|, \\
&\ge  c_\epsilon \left| \int_{t_1}^{t_2} F(t-s)|K(y,s)|\cos k_0( s-s_m) \, {\rm d}s \right|,
 \\
&\ge  c_\epsilon  \int_{t_1}^{t_2} F(t-s)|K(y,s)|\cos k_0( s-s_m) \, {\rm d}s ,
\end{aligned}
\]
where  $s_m = \frac{t_1 + t_2}{2}$, and $|e^{{\rm i} ((k_0 +\tau_c)t - \tau_c \hat x \cdot y)}| = 1$ and $|e^{\mathrm i s_m}|=1$ are used.
By Hypothesis (H3), it holds that $k_0 |s - s_m| \le k_0 \frac{t_2 - t_1}{2} \le \frac{\pi - \delta_0}{2} = \frac{\pi}{2} - \frac{\delta_0}{2}.$ Furthermore, we obtain a strictly positive lower bound for $\cos k_0(s-s_m)$:
$$\cos(k_0(s - s_m)) \ge \cos\left(\frac{\pi}{2} - \frac{\delta_0}{2}\right) = \sin\left(\frac{\delta_0}{2}\right) > 0.$$
According to Hypothesis (H1), $F(t-s) \ge F_{\min} > 0$ for all $t-s \in [0, t_2 - t_1]$. Consequently, it follows that
$$\int_{t_1}^{t_2} F(t-s)|K(y,s)| \cos(k_0(s - s_m)) \, {\rm d}s \ge \int_{t_1}^{t_2} F_{\min} \cdot c_\epsilon \cdot \sin\left(\frac{\delta_0}{2}\right) \, {\rm d}s = F_{\min} c_\epsilon \sin\left(\frac{\delta_0}{2}\right) (t_2 - t_1).$$
Thus, we have the uniform lower bound
\[
|\operatorname{Re}(\mathcal T_\epsilon)(y,t)|
+|\operatorname{Im}(\mathcal T_\epsilon)(y,t)|
\ge F_{\min} c_\epsilon^2 \sin\left(\frac{\delta_0}{2}\right) (t_2 - t_1)
=: c_0 > 0.
\]
Substituting this estimate into the quadratic form yields
\[
\begin{aligned}
\langle \mathcal T_{\epsilon,\#}\psi,\psi\rangle
&\ge c_0
\int_{D \setminus \overline{Y}_\epsilon} \int_{t_1}^{t_2}
|\psi(y,t)|^2 \, \mathrm{d}t\,\mathrm{d}y \\
&= c_0 \|\psi\|^2_{L^2((D \setminus \overline{Y}_\epsilon)\times [t_1,t_2])}.
\end{aligned}
\]

\noindent Applying Proposition \ref{app:range},  we obtain the relation
\begin{equation}\label{range-epsilon}
  \operatorname{Range}\big(\mathcal F_{\epsilon,\#}^{1/2}\big)
=\operatorname{Range}(\mathcal L_\epsilon),
\end{equation}
Inspired by the idea of \cite{Ma2025}, we  have that $\mathcal F_\epsilon : L^2(0,\tau_w) \to L^2(0,\tau_w)$
converges to $\mathcal F: L^2(0,\tau_w) \to L^2(0,\tau_w)$ in the sense of  operator norm as $\epsilon \to 0$. Furthermore, there holds
\begin{equation} \label{range-fl}
\operatorname{Range}\big(\mathcal F_{\epsilon,\#}^{1/2}\big)
\to
\operatorname{Range}\big(\mathcal F_\#^{1/2}\big),
\qquad
\operatorname{Range}(\mathcal L_\epsilon)
\to
\operatorname{Range}(\mathcal L).
\end{equation}
Combining \eqref{range-epsilon} and \eqref{range-fl}  yields the asserted range identity. The proof is complete.
\end{proof}

To translate the abstract range characterization into geometric information, we introduce, for a given observation direction $\hat x \in \mathbb S^{d-1}$, the strip
\[
K_D^{(\hat x)}
:=
\bigl\{
y \in \mathbb R^d :
\inf_{z\in D} (\hat x \cdot z)
<
\hat x \cdot y
<
\sup_{z\in D} (\hat x \cdot z)
\bigr\}.
\]
Geometrically, $K_D^{(\hat x)}$ is the smallest strip containing the support $D$ and orthogonal to the observation direction $\hat x$.
For a given sampling point $y \in \mathbb R^d$, we further define the test function
\begin{equation}\label{eq:test}
\phi_{y,\epsilon}^{(\hat x)}(\tau)
=
\frac{1}{(t_2-t_1)\,|B_\epsilon(y)|}
\int_{t_1}^{t_2}
\int_{B_\epsilon(y)}
e^{-\mathrm{i}\tau\hat x\cdot z}
\,e^{\mathrm{i}\tau t}
\,\mathrm{d}z\,\mathrm{d}t,
\qquad
\tau \in (0,\tau_w),
\end{equation}
where $B_\epsilon(y)$ denotes the ball of radius $\epsilon$ centered at $y$.
This test function will be used to probe the range of $\mathcal L$ and to characterize the location of the source support.

\subsection{Indicator functions and reconstruction of the $\Theta$-convex hull}

By Picard's Theorem and Theorem~\ref{thm:range}, we introduce an indicator function based on the
spectral decomposition of the multi-frequency far-field operator.
Since the underlying range characterization follows directly from the established
factorization framework, we state the corresponding characterization theorem.

For a fixed observation direction $\hat x \in \mathbb S^{d-1}$, we define
\begin{equation}\label{eq:In}
 \mathcal I^{(\hat x)}(y)
 :=
\left[
\sum_{n=1}^{\infty}
\frac{
\bigl|\langle \phi^{(\hat x)}_{y,\epsilon}, \psi^{(\hat x)}_n\rangle_{L^2(0,\tau_w)}\bigr|^2
}{
|\lambda_n^{(\hat x)}|
}
\right]^{-1},
\qquad y \in \mathbb R^d,
\end{equation}
where $\{(\lambda_n^{(\hat x)},\psi^{(\hat x)}_n)\}_{n\ge1}$ denotes the eigensystem of the
self-adjoint operator $\mathcal F^{(\hat x)}_\#$, and
$\phi^{(\hat x)}_{y,\epsilon}$ is the test function centered at the sampling point $y$.

The indicator function $\mathcal I^{(\hat x)}$ provides a computational criterion
for determining whether a sampling point belongs to $K_D^{(\hat x)}$.
This criterion is based on whether the associated test function
$\phi_{y,\epsilon}^{(\hat x)}$ belongs to the range of the data-to-pattern operator
$\mathcal L^{(\hat x)}$, which encodes the geometric information of the source support. Specifically, we have the following result.

\begin{lemma}\label{lem:test}
(\cite{GH2024})  Let $\hat x \in \mathbb S^{d-1}$ be a fixed observation direction. Then the following
assertions hold:
\begin{enumerate}
\item
If $y \in K_D^{(\hat x)}$, there exists $\epsilon_0>0$ such that
\[
\phi^{(\hat x)}_{y,\epsilon} \in \mathrm{Range}\bigl(\mathcal L^{(\hat x)}\bigr)
\quad \text{for all } \epsilon\in(0,\epsilon_0).
\]
\item
If $y \notin K_D^{(\hat x)}$, then
\[
\phi^{(\hat x)}_{y,\epsilon} \notin \mathrm{Range}\bigl(\mathcal L^{(\hat x)}\bigr)
\quad \text{for all } \epsilon>0.
\]
\end{enumerate}
\end{lemma}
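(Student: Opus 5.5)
The whole argument rests on one reduction. Both $\mathcal L^{(\hat x)}$ and $\phi^{(\hat x)}_{y,\epsilon}$ depend on space–time points only through the scalar $r:=\hat x\cdot z-t$. Indeed,
\[
\phi^{(\hat x)}_{y,\epsilon}=\mathcal L\big(\chi_{B_\epsilon(y)\times[t_1,t_2]}/((t_2-t_1)|B_\epsilon(y)|)\big),
\]
in the sense of the formal integral kernel. This is a genuine element of $\mathrm{Range}(\mathcal L)$ exactly when the density can be placed inside $D\times[t_1,t_2]$.

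Write both sides as Fourier transforms on the line. Let $\Phi(\tau)=\int_{\mathbb R} e^{-\mathrm i\tau r}h(r)\,\mathrm dr$, where $h$ is the push-forward of the normalized indicator of $B_\epsilon(y)\times[t_1,t_2]$ under $(z,t)\mapsto \hat x\cdot z-t$. Then $h\ge0$, $h$ is continuous and compactly supported, and
\[
\operatorname{supp}h=[\hat x\cdot y-\epsilon-t_2,\ \hat x\cdot y+\epsilon-t_1].
\]
Similarly, for $u\in L^2(D\times[t_1,t_2])$ we have $\mathcal Lu=\widehat{P u}|_{(0,\tau_w)}$, where $(Pu)(r)$ is the integral of $u$ over the slab $\{\hat x\cdot z-t=r\}$. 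This $Pu$ lies in $L^2$ and is supported in
\[
I_D:=[\inf_D\hat x\cdot z-t_2,\ \sup_D\hat x\cdot z-t_1].
\]

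\textbf{Part (1).} Let $y\in K_D^{(\hat x)}$ and choose $\epsilon_0$ so that the interval $[\hat x\cdot y-\epsilon,\hat x\cdot y+\epsilon]$ lies in the open projection interval $(\inf_D\hat x\cdot z,\sup_D\hat x\cdot z)$. This works because the projection of the open set $D$ is an open interval containing the projection of $y$. Then $\operatorname{supp}h\subset I_D$, so we need some $u$ with $Pu=h$. Fix a measurable family of points $z(s)\in D$ with $\hat x\cdot z(s)=s$, and take $u(z,t)=h(\hat x\cdot z-t)\,\rho_{\hat x\cdot z}(z_\perp)$. Here $\rho_s$ is a normalized bump in the transverse variables on a small ball around $z(s)$ inside the cross-section $D\cap\{\hat x\cdot z=s\}$. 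Using $t$ as the free variable does not work directly: for fixed $r$ the admissible pairs $(s,t)$ with $s-t=r$ must satisfy $s\in$ projection of $D$ and $t\in[t_1,t_2]$, and a good choice is needed. So I would set $u(z,t)=h(\hat x\cdot z-t)\,w(z)/m(\hat x\cdot z-t)$, with $w\ge0$ a smooth bump in $D$ and $m(r)=\int\!\!\int w(z)\,\chi_{\{\hat x\cdot z-t=r\}}$. One checks $m$ is bounded below on $\operatorname{supp}h$ once $w$ is positive on a neighbourhood of the needed slabs, which follows from the strict inclusion. Boundedness of $u$ then gives $u\in L^2$, and $\mathcal Lu=\phi$.

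\textbf{Part (2).} Let $y\notin K_D^{(\hat x)}$, say $\hat x\cdot y\ge\sup_D\hat x\cdot z$. Suppose $\phi_{y,\epsilon}=\mathcal Lu$. Both $\Phi$ and $\widehat{Pu}$ are entire functions, being Fourier transforms of compactly supported $L^1$ functions, and they agree on $(0,\tau_w)$. By analytic continuation they agree on $\mathbb R$, and Fourier injectivity gives $h=Pu$. But $h>0$ on a neighbourhood of its right endpoint $\hat x\cdot y+\epsilon-t_1>\sup_D\hat x\cdot z-t_1$, while $Pu$ vanishes there. This is a contradiction. The case $\hat x\cdot y\le\inf_D\hat x\cdot z$ is symmetric, using the left endpoint. (Paley–Wiener gives the same conclusion through support of the inverse transform.)

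\textbf{Main obstacle.} The main difficulty is the explicit construction in Part (1). One must produce an $L^2$ preimage whose slab integrals reproduce $h$ exactly. The key point is that $m$ stays bounded away from zero on $\operatorname{supp}h$ near the ends of $I_D$. This is where the restriction to $\epsilon<\epsilon_0$ and the openness of $D$ are used.
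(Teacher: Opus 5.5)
Your proof is correct. Your part (2) is the standard argument: both sides are restrictions to $(0,\tau_w)$ of entire functions of $\tau$, so they agree on $\mathbb R$, Fourier injectivity gives $h=Pu$, and $h>0$ just beyond the endpoint of $I_D$, where $Pu$ vanishes.

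For part (1) you take a different route. The paper gives no proof of its own beyond the citation. The argument usually used for this lemma is a one-line translation trick rather than a construction. Since $\phi^{(\hat x)}_{y,\epsilon}$ depends on $y$ only through $\hat x\cdot y$, one picks $\tilde y\in D$ with $\hat x\cdot\tilde y=\hat x\cdot y$; this is possible because the projection of the connected open set $D$ is the open interval $(\inf_D\hat x\cdot z,\sup_D\hat x\cdot z)$. One then shrinks $\epsilon$ until $B_\epsilon(\tilde y)\subset D$, and notes that $\phi^{(\hat x)}_{y,\epsilon}=\phi^{(\hat x)}_{\tilde y,\epsilon}=\mathcal L\bigl(\chi_{B_\epsilon(\tilde y)\times[t_1,t_2]}/((t_2-t_1)|B_\epsilon(\tilde y)|)\bigr)$.

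Your normalisation $u=h(\hat x\cdot z-t)\,w(z)/m(\hat x\cdot z-t)$ costs more but proves more. Here $m(r)=\int_{r+t_1}^{r+t_2}W(s)\,\mathrm ds$, with $W$ the slice integral of $w$. This $m$ is continuous and can be made positive on any compact subset of the interior of $I_D$, so every bounded $h$ compactly supported there is attained. Two things follow:
\begin{itemize}
\item You get the explicit threshold $\epsilon_0=\min\{\hat x\cdot y-\inf_D\hat x\cdot z,\ \sup_D\hat x\cdot z-\hat x\cdot y\}$. The translation trick instead needs a ball of radius $\epsilon$ inside $D$ centred on the hyperplane $\{\hat x\cdot z=\hat x\cdot y\}$, which is much more restrictive for thin $D$.
\item Your argument does not rely on the test function being a ball average.
\end{itemize}

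In the write-up, make two changes:
\begin{itemize}
\item Drop the abandoned first attempt with $z(s)$ and $\rho_s$.
\item Make the choice of $w$ explicit. Cover $[\hat x\cdot y-\epsilon-\eta,\hat x\cdot y+\epsilon+\eta]\subset(\inf_D\hat x\cdot z,\sup_D\hat x\cdot z)$ by the projections of finitely many balls in $D$, and let $w$ be a sum of bumps positive on those balls. This is exactly where connectedness of $D$ is used.
\end{itemize}
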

Furthermore, we have the following computational criterion:
\begin{theorem}\label{thm:indicator}
Let $\hat x \in \mathbb S^{d-1}$ be a fixed observation direction. Then the indicator function $\mathcal I^{(\hat x)}$ satisfies
\[
\mathcal I^{(\hat x)}(y) > 0 \quad \text{if}\quad  y \in \overline{K_D^{(\hat x)}},
\qquad
\mathcal I^{(\hat x)}(y) = 0 \quad \text{if} \quad y \notin \overline{K_D^{(\hat x)}}.
\]
\end{theorem}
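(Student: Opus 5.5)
Our plan combines Picard's criterion with the range identity of Theorem~\ref{thm:range} and the range characterization of Lemma~\ref{lem:test}. Since $\mathcal F_\#=\mathcal F^{(\hat x)}_\#$ is compact, self-adjoint and positive (Proposition~\ref{app:range}), it has an eigensystem $\{(\lambda_n^{(\hat x)},\psi_n^{(\hat x)})\}$ with $\lambda_n^{(\hat x)}>0$. The positivity also makes $\mathcal F_\#$ injective, so $\{\psi_n^{(\hat x)}\}$ is a complete orthonormal basis of $L^2(0,\tau_w)$. The square root $\mathcal F_\#^{1/2}$ has the eigensystem $\{(\sqrt{\lambda_n^{(\hat x)}},\psi_n^{(\hat x)})\}$. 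Picard's theorem therefore gives, for any $\phi\in L^2(0,\tau_w)$,
\[
\phi\in\operatorname{Range}\big(\mathcal F_\#^{1/2}\big)
\iff
\sum_{n=1}^\infty\frac{|\langle\phi,\psi_n^{(\hat x)}\rangle|^2}{|\lambda_n^{(\hat x)}|}<\infty .
\]
By Theorem~\ref{thm:range}, the left-hand side is equivalent to $\phi\in\operatorname{Range}(\mathcal L^{(\hat x)})$. Hence, with the convention $1/\infty=0$,
\[
\mathcal I^{(\hat x)}(y)>0 \iff \phi^{(\hat x)}_{y,\epsilon}\in\operatorname{Range}(\mathcal L^{(\hat x)}).
\]

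Next we apply Lemma~\ref{lem:test}. If $y\notin\overline{K_D^{(\hat x)}}$, then in particular $y\notin K_D^{(\hat x)}$, so part~(2) gives $\phi^{(\hat x)}_{y,\epsilon}\notin\operatorname{Range}(\mathcal L^{(\hat x)})$ for every $\epsilon>0$. The series then diverges and $\mathcal I^{(\hat x)}(y)=0$. If $y\in K_D^{(\hat x)}$, part~(1) gives $\phi^{(\hat x)}_{y,\epsilon}\in\operatorname{Range}(\mathcal L^{(\hat x)})$ for $\epsilon$ small. The series is then finite and $\mathcal I^{(\hat x)}(y)>0$. Throughout we interpret the indicator with $\epsilon$ below the threshold $\epsilon_0$, as in Lemma~\ref{lem:test}.

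The main obstacle is the boundary of the strip, i.e.\ the two hyperplanes $\hat x\cdot y=\inf_{z\in D}\hat x\cdot z$ and $\hat x\cdot y=\sup_{z\in D}\hat x\cdot z$. These lie in $\overline{K_D^{(\hat x)}}$ but not in the open strip, so part~(2) of the lemma does not apply and part~(1) does not directly apply either. For such $y$ we would return to the construction behind Lemma~\ref{lem:test}. The test function depends on $z\in B_\epsilon(y)$ only through $\hat x\cdot z$, so $\phi^{(\hat x)}_{y,\epsilon}$ is an average of $e^{-\mathrm i\tau(\hat x\cdot z-t)}$ over a slab of values of $\hat x\cdot z$. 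One then tries to write it as $\mathcal L u$ for some $u$ supported where the projection $\hat x\cdot D$ overlaps this slab. This requires changing the time variable to absorb the shift in $\hat x\cdot z$ inside the window $[t_1,t_2]$. Since $\hat x\cdot y$ is an endpoint of the projection interval, $B_\epsilon(y)$ meets the open strip, and we would try to absorb the part of the slab lying outside via the $t$-integration.

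If this boundary argument fails, the fallback is to prove the statement with $K_D^{(\hat x)}$ in place of its closure in the first case, and to note that the boundary is a null set, irrelevant for the imaging. The claim ``$=0$ outside the closure'' is unaffected.
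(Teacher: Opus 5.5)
\textbf{Where the proposal breaks down: the boundary hyperplanes of the strip.}
Your chain (Picard's criterion for $\mathcal F_\#^{1/2}$, then Theorem~\ref{thm:range}, then Lemma~\ref{lem:test}) is the paper's own argument. It correctly gives $\mathcal I^{(\hat x)}(y)>0$ on the open strip (for $\epsilon<\epsilon_0(y)$) and $\mathcal I^{(\hat x)}(y)=0$ off the closed strip.

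The gap is the two hyperplanes $\hat x\cdot y=a$ and $\hat x\cdot y=b$, where $a:=\inf_{z\in D}\hat x\cdot z$ and $b:=\sup_{z\in D}\hat x\cdot z$. You say part~(2) of Lemma~\ref{lem:test} does not apply there, but it does. $K_D^{(\hat x)}$ is defined by strict inequalities, so such $y$ satisfy $y\notin K_D^{(\hat x)}$. Part~(2) together with your own equivalence then gives $\mathcal I^{(\hat x)}(y)=0$ there, for every $\epsilon>0$. Hence the first assertion, as stated with the closure, is false on $\partial K_D^{(\hat x)}$, and your plan to absorb the overhang through the $t$-integration must fail.

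You can see this directly by setting $s=\hat x\cdot z-t$:
\begin{itemize}
\item Every $\mathcal L u$ equals $\int h(s)e^{-\mathrm{i}\tau s}\,\mathrm{d}s$ for some $h\in L^1$ supported in $[a-t_2,\,b-t_1]$, because $\hat x\cdot z<b$ and $t\ge t_1$.
\item The test function is $\phi^{(\hat x)}_{y,\epsilon}=\int\rho(s)e^{-\mathrm{i}\tau s}\,\mathrm{d}s$, where $\rho$ is the density of $\hat x\cdot z-t$ for $z$ uniform on $B_\epsilon(y)$ and $t$ uniform on $[t_1,t_2]$.
\item If $\hat x\cdot y=b$, then $\rho>0$ on $(b-t_1,\,b+\epsilon-t_1)$.
\item Both functions are entire in $\tau$, so equality on $(0,\tau_w)$ would force $\rho=h$ a.e., which is impossible.
\end{itemize}
No choice of time profile can place mass above $s=b-t_1$.

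\textbf{What is actually provable.}
The paper's proof has the same hole: it invokes Lemma~\ref{lem:test}(1), which covers only the open strip, to claim positivity on $\overline{K_D^{(\hat x)}}$. What is true is your fallback: $\mathcal I^{(\hat x)}(y)>0$ for $y\in K_D^{(\hat x)}$ and $\mathcal I^{(\hat x)}(y)=0$ for $y\notin K_D^{(\hat x)}$. This is also what the combined result for $\Theta_D=\bigcap_j K_D^{(\hat x_j)}$, an intersection of open strips, actually uses. Present it as a correction of the statement, not as setting aside a null set on which the claim might still hold.

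Note also that $\epsilon_0(y)\to 0$ as $y$ approaches $\partial K_D^{(\hat x)}$. By the same Fourier-support argument, for any fixed $\epsilon$ the indicator vanishes wherever $\hat x\cdot y$ lies strictly within $\epsilon$ of $a$ or $b$. So ``$\epsilon$ below the threshold $\epsilon_0$'' must mean an $\epsilon$ chosen separately for each $y$, not a single $\epsilon$ for the whole strip.
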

\begin{proof}
    Applying Picard's Theorem (see, e.g.,\cite[Theorem 4.8]{CK2019}) gives $[\mathcal I^{(\hat x)}(y)]^{-1}<\infty$ if and only if $\phi^{(\hat x)}_{y,\epsilon} \in {\mathrm{Range}(\mathcal F_\#^{1/2})}$. By range identity ${\operatorname{Range}\big(\mathcal F_\#^{1/2}\big)}={\operatorname{Range}(\mathcal L)}$ in Theorem~\ref{thm:range}, we know $[\mathcal I^{(\hat x)}(y)]^{-1}<\infty$ if $\phi^{(\hat x)}_{y,\epsilon}\in {\mathrm{Range}(\mathcal L)}$. Combining Lemma~\ref{lem:test} and the above analysis yields $\mathcal I^{(\hat x)}(y) > 0$ if $y \in \overline{K_D^{(\hat x)}}.$ Similarly,   if $y \notin \overline{K_D^{(\hat x)}},$ $\phi^{(\hat x)}_{y,\epsilon} \notin {\mathrm{Range}(\mathcal L)}$ and $[\mathcal I^{(\hat x)}(y)]^{-1}=\infty$, then we have $\mathcal I^{(\hat x)}(y) = 0$.
\end{proof}

Theorem~\ref{thm:indicator} shows that, for a single observation direction,
the indicator function $\mathcal I^{(\hat x)}$ yields an exact characterization
of the strip $K_D^{(\hat x)}$ determined by the multi-frequency far-field correlation data.

\medskip
In practical applications, the wave fields are measured at only finitely many observation directions
$\{\hat x_j\}_{j=1}^J$.
To incorporate all available data, we define the combined indicator function
\begin{equation*}
\mathcal I(y)
:=
\left[
\sum_{j=1}^{J}
\frac{1}{\mathcal I^{(\hat x_j)}(y)}
\right]^{-1}
=
\left[
\sum_{j=1}^{J}
\sum_{n=1}^{\infty}
\frac{
\bigl|\langle \phi^{(\hat x_j)}_{y,\epsilon}, \psi^{(\hat x_j)}_n\rangle_{L^2(0,\tau_w)}\bigr|^2
}{
|\lambda_n^{(\hat x_j)}|
}
\right]^{-1},
\qquad y \in \mathbb R^d .
\end{equation*}
Correspondingly, we define the $\Theta$-convex hull of $D$ associated with the observation directions
$\{\hat x_j\}_{j=1}^J$ by
\begin{equation*}
\Theta_D
:=
\bigcap_{j=1}^{J}
K_D^{(\hat x_j)} .
\end{equation*}

Geometrically, $\Theta_D$ is the smallest convex set obtained as the intersection of all strips determined by the given observation directions.
By Theorem~\ref{thm:indicator}, each strip $K_D^{(\hat x_j)}$ can be uniquely determined from the multi-frequency far-field correlation data, and hence their intersection $\Theta_D$ is uniquely determined as well. In summary, we have the following result.

\begin{theorem}
The combined indicator function $\mathcal I$ satisfies
\[
\mathcal I(y) > 0 \quad \text{if } y \in \Theta_D,
\qquad
\mathcal I(y) = 0 \quad \text{if } y \notin \Theta_D.
\]
\end{theorem}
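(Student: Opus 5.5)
The plan is to reduce the statement to Theorem~\ref{thm:indicator}, applied to each observation direction $\hat x_j$ separately. The link between the two is the elementary identity
\[
\frac{1}{\mathcal I(y)}=\sum_{j=1}^J \frac{1}{\mathcal I^{(\hat x_j)}(y)} \in [0,\infty],
\]
which is immediate from the definitions. Each summand is a nonnegative (possibly infinite) series, so the sum is well defined in $[0,\infty]$. We adopt the conventions $1/0=\infty$ and $1/\infty=0$, exactly as in the proof of Theorem~\ref{thm:indicator}. With these conventions, $\mathcal I(y)>0$ holds if and only if every term $[\mathcal I^{(\hat x_j)}(y)]^{-1}$ is finite, and $\mathcal I(y)=0$ holds if and only if at least one such term is infinite. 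All summands are nonnegative, so no cancellation can occur; this is the only point where positivity of the terms is used.

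\emph{First case: $y\in\Theta_D$.} Then $y\in K_D^{(\hat x_j)}$ for every $j=1,\dots,J$. For each $j$, Lemma~\ref{lem:test}(1) gives $\epsilon_j>0$ with $\phi^{(\hat x_j)}_{y,\epsilon}\in\mathrm{Range}(\mathcal L^{(\hat x_j)})$ for all $\epsilon<\epsilon_j$. Taking $\epsilon<\min_j\epsilon_j$ makes this hold for all $j$ simultaneously. Finiteness of the minimum is where the finiteness of $J$ is essential. Theorem~\ref{thm:range} then gives $\phi^{(\hat x_j)}_{y,\epsilon}\in\mathrm{Range}\big((\mathcal F^{(\hat x_j)}_\#)^{1/2}\big)$. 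By Picard's theorem, as in the proof of Theorem~\ref{thm:indicator}, each series $[\mathcal I^{(\hat x_j)}(y)]^{-1}$ is therefore finite. A finite sum of finite nonnegative numbers is finite, so $\mathcal I(y)>0$.

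\emph{Second case: $y\notin\Theta_D$.} Then there is some index $j_0$ with $y\notin K_D^{(\hat x_{j_0})}$. If in fact $y\notin\overline{K_D^{(\hat x_{j_0})}}$, then Theorem~\ref{thm:indicator} (or Lemma~\ref{lem:test}(2) together with Picard) gives $[\mathcal I^{(\hat x_{j_0})}(y)]^{-1}=\infty$, hence the whole sum is infinite and $\mathcal I(y)=0$.

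The main obstacle is the boundary set. Since $\Theta_D$ is an intersection of open strips, a point $y$ can lie on $\partial K_D^{(\hat x_{j_0})}$, and there Theorem~\ref{thm:indicator} asserts positivity rather than vanishing. I would handle this in one of two ways. The first option is to invoke Lemma~\ref{lem:test}(2) directly: it is stated for all $y\notin K_D^{(\hat x)}$, boundary points included, and yields $\phi^{(\hat x_{j_0})}_{y,\epsilon}\notin\mathrm{Range}(\mathcal L^{(\hat x_{j_0})})$ for every $\epsilon>0$. The range identity and Picard's theorem then give $[\mathcal I^{(\hat x_{j_0})}(y)]^{-1}=\infty$, so the argument goes through at boundary points as well. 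The second option, if one insists on consistency with Theorem~\ref{thm:indicator}, is to interpret the statement up to the null set $\partial\Theta_D$, i.e.\ with $\Theta_D$ replaced by its closure in the first case. I would take the first option and note the boundary subtlety explicitly.
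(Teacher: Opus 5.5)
Your argument is correct and follows the paper's route. The paper likewise reduces the combined indicator to the single-direction criteria through $1/\mathcal I=\sum_{j}1/\mathcal I^{(\hat x_j)}$ and cites Theorem~\ref{thm:indicator}. Your version is more careful on two points the paper glosses over. First, Theorem~\ref{thm:indicator} is stated with closures, so citing it alone would assert $\mathcal I^{(\hat x_{j_0})}(y)>0$ on $\partial K_D^{(\hat x_{j_0})}$ and cannot give $\mathcal I(y)=0$ there; your direct use of Lemma~\ref{lem:test}(2), which does hold at such boundary points for every $\epsilon>0$, does give it. Second, your choice of a common $\epsilon<\min_j\epsilon_j$ makes explicit that the positivity claim only holds for $\epsilon$ small depending on $y$.
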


To conclude this section, we point out that the factorization with multi-frequency far-field data can be carried over to the case of using near-field data.
We discuss this extension in the Appendix.

\section{Numerical examples}
In this section, we present numerical experiments in both two and three
dimensions to demonstrate the effectiveness of the proposed
factorization-based method for identifying the support of the noise source
from multi-frequency far-field correlation data.
All numerical examples are shown to be consistent with the theoretical findings
in previous sections.

For a fixed observation direction $\hat{x}$ and a reference wavenumber $k_0$,
we consider the continuous far-field operator introduced in the theoretical
analysis, which acts on the frequency variable through an integral formulation.
In the numerical implementation, the frequency interval $(0,\tau_w)$ is
uniformly discretized as
\[
\omega_n = (n-0.5)\Delta\tau, \quad n = 1,\ldots,N,
\]
with step size $\Delta\tau = \tau_w/N$.
Applying a quadrature approximation to the continuous integral operator,
we obtain a finite-dimensional matrix representation
$\mathcal{F}^{(\hat{x},k_0)} \in \mathbb{C}^{N\times N}$, and the far-field correlation operator can be discretized by
\[
(\mathcal{F}^{(\hat{x},k_0)}\phi)(\tau_n)
\approx
\Delta\tau\,\sum_{m=1}^{N} C\!\left(\hat{x},\, \tau_c + \tau_n - \gamma_m,\, k_0\right) \phi(\gamma_m),
\]
where $\tau_n:=n\Delta \tau$ and $\gamma_m:=(m+0.5)\Delta\tau$, $m,n=1,\ldots,N$.
Equivalently, the resulting data matrix can be written in the explicit form
\[
\mathcal{F}^{(\hat{x},k_0)} := \Delta \tau
\begin{pmatrix}
C(\hat{x},\tau_c-\omega_1,k_0) & C(\hat{x},\tau_c-\omega_2,k_0) & \cdots & C(\hat{x},\tau_c-\omega_N,k_0) \\
C(\hat{x},\tau_c+\omega_1,k_0) & C(\hat{x},\tau_c-\omega_1,k_0) & \cdots & C(\hat{x},\tau_c-\omega_{N-1},k_0) \\
\vdots & \vdots & \ddots & \vdots \\
C(\hat{x},\tau_c+\omega_{N-1},k_0) & C(\hat{x},\tau_c+\omega_{N-2},k_0) & \cdots & C(\hat{x},\tau_c-\omega_1,k_0)
\end{pmatrix}.
\]

Let
\[
\bigl\{(\tilde{\lambda}_n^{(\hat{x},k_0)}, \phi_n^{(\hat{x},k_0)}) :
n = 1,\ldots,N \bigr\}
\]
denote an eigensystem of the discrete far-field data matrix
$\mathcal{F}^{(\hat{x},k_0)}$.
Based on this spectral decomposition, the associated factorization operator
admits the eigensystem
\[
\bigl\{(\lambda_n^{(\hat{x},k_0)}, \phi_n^{(\hat{x},k_0)}) :
n = 1,\ldots,N \bigr\},
\]
with eigenvalues given explicitly by
\[
\lambda_n^{(\hat{x},k_0)}
=
\bigl|
\operatorname{Re}(\tilde{\lambda}_n^{(\hat{x},k_0)})
\bigr|
+
\bigl|
\operatorname{Im}(\tilde{\lambda}_n^{(\hat{x},k_0)})
\bigr|.
\]
For any sampling point $y \in \mathbb{R}^{d-1}$, we define the discrete test
function vector $\tilde\phi_y^{(\hat{x})} \in \mathbb{C}^N$ by discretizing the
continuous test function \eqref{eq:test} with respect to the frequency variable.
The $n$-th component of the test vector is given by as $\epsilon\rightarrow0$
\[
\bigl(\phi_y^{(\hat{x})}\bigr)_n
=
\frac{1}{t_2-t_1}
\int_{t_1}^{t_2}
e^{-\mathrm{i}\tau_n\hat x\cdot y }
\, e^{\mathrm{i}\tau_n t}\,\mathrm{d}t,
\qquad n = 1,2,\ldots,N.
\]
Equivalently, the test vector can be written in the vector form
\[
\tilde\phi_y^{(\hat{x})}
=
\bigl(
{\phi}_y^{(\hat x)}(\tau_1),
{\phi}_y^{(\hat x)}(\tau_2),
\ldots,
{\phi}_y^{(\hat x)}(\tau_N)
\bigr)^\top.
\]
Then, substituting the eigensystem of $\mathcal{F}^{(\hat x, k_0)}_\#$ and the discretized test function into the indicator function \eqref{eq:In}, we can compute the truncated indicator function numerically.

Noting that the time-dependent source  $S(x,t)$ is real valued, we have $u^\infty (\hat  x, - k) = \overline{u^\infty (\hat x, k)}$ for all $k > 0$. We suppose $\tau_{\min}=0$, then the bandwidth of frequency can be extended from $(0,\tau_{\max})$ to $(-\tau_{\max},\tau_{\max})$ by $u^{\infty}(\hat x, -k)=\overline {u^{\infty}(\hat x, k)}$, thus one deduces from these new measurement data with $\tau_{\min}=-\tau_{\max}$ that $\tau_c=0$ and $\tau_w=\tau_{\max}$. Therefore, we obtain
\begin{eqnarray*}
    &&C(\hat x,  -\tau, k)\\
    &=&\int_D e^{\mathrm{i}\tau \hat x\cdot y}\int_{t_1}^{t_2}  \int_{t_1}^{t_2}  F(t-s)K(y,t)K(y,s) e^{{\rm i}(k-\tau)t}  e^{-{\rm i} ks}\mathrm{d} s\,\mathrm{d} t\,\mathrm{d} y  \nonumber\\
    &=&\overline {\int_D \overline{e^{\mathrm{i}\tau \hat x\cdot y}}\int_{t_1}^{t_2}  \int_{t_1}^{t_2}  F(t-s)K(y,t)K(y,s) \overline{e^{{\rm i}(k-\tau)t}}  \,\overline{e^{-{\rm i} ks}}\mathrm{d}s\,\mathrm{d}t\,\mathrm{d}y } \nonumber\\
    &=&\overline {\int_D {e^{-\mathrm{i}\tau \hat x\cdot y}}\int_{t_1}^{t_2}  \int_{t_1}^{t_2}  F(t-s)K(y,t)K(y,s) {e^{{\rm i}(\tau-k)t}}  e^{-{\rm i} (-k)s}\mathrm{d}s\,\mathrm{d}t\,\mathrm{d}y } \nonumber\\
     &=& \overline{\mathbb E [u^{\infty}(\hat x,\tau-k) \overline{u^{\infty}(\hat x, -k)}]}\\
     &=& C(\hat x, \tau,-k)= \overline{\mathbb E [u^{\infty}(\hat x,\tau-k) u^{\infty}(\hat x, k)]}
\end{eqnarray*}

In the following numerical experiments, the random source is modeled as $S(x,t) = \mathbb{G}(t)K(x,t)\dot W(x)$. Here we assume that $\mathbb{G}(t)$ is a mean-zero Gaussian process with covariance $F(t-s)=e^{-\frac{(t-s)^2}{0.1^2}}$, and $\dot W(x)$ is spatial white noise which can be interpreted in the distributional sense as the derivative of a Brownian sheet. It is straightforward to verify that the covariance function of $S(x,t)$ is consistent with the theoretical setting (\ref{eq:cov1}).
For the discretization of the random source, we introduce a regular grid of points $\{x_i\}_{i=1}^{N_x}$, $\{t_j\}_{j=1}^{N_t}$ with grid step $h_x$, $h_t$ covering the spatial and temporal support of $K$, respectively. Then Gaussian process $\mathbb{G}(t)$ can be discretized by a joint Gaussian distribution and spatial white noise $\dot W(x)$ can be approximated by a piecewise function with Gaussian variables $$\dot W(x)= \sum_{i=1}^{N_x} |K_i|^{-1}\chi_i(x)\int_{K_i}{\rm d} W(x)={h_x}^{-\frac{d}{2}}\sum_{i=1}^{N_x}\chi_i(x)Z_{i},$$
where $K_i$ represents a cell centered at $x_i$ with side length $h_x$, $\chi_i$ denotes the indicator function of $K_i$, and $Z_{i}$ are independent Gaussian random variables with mean 0 and variance 1.
We employ the Monte Carlo method to evaluate the expectation of correlation far-field data $C(\hat x, \tau, k)$, and we use (\ref{eq:far}) to compute the far-field pattern for each sample. The number of Monte Carlo samples is set to 10000.
\subsection{Numerical examples in $\mathbb{R}^2$}
In this subsection, we present several two-dimensional numerical experiments.
The frequency bandwidth is chosen as
\[
\tau \in (0,2\pi), \qquad
\tau_n = \frac{n\pi}{10}, \quad n=1,2,\ldots,N_\tau,\quad N_\tau=20,
\]
with the reference wavenumber $k_0=\pi/20$ and frequency step
$\Delta\tau=\pi/10$.
The observation directions are uniformly distributed on the unit circle and
given by
\[
\hat x_n = (\cos\theta_n,\sin\theta_n), \qquad
\theta_n = \frac{(n-1)\pi}{N_{\hat x}}, \quad n=1,2,\ldots,N_{\hat x},
\]
where $N_{\hat x}=12$.
For each observation direction, multi-frequency far-field correlation data are
generated and assembled to form the discrete far-field operator used in the
factorization algorithm.
The sampling region is taken as a square domain containing the support of the
source, which is uniformly discretized into a Cartesian grid.
At each sampling point, the indicator function associated with the
factorization method is evaluated.
In all examples below, the reconstructed support is visualized by plotting the
 indicator function and the boundary of the source support is highlighted by pink solid line, where higher values indicate a higher
likelihood that the sampling point belongs to the source support.

\medskip
\noindent
\textbf{Example 1 (Disk-shaped support).}
In the first example, the noise source $S(x,t)$ is supported in a disk,
\[
D=\{(x_1,x_2)\in\mathbb{R}^2:\ x_1^2+x_2^2\le 1\}.
\]
Figure~\ref{fig:2-exa1} shows the reconstruction results  using far-field data from one or 12 observation directions for a disk-shaped source support with $K(x,t)=t+1$ and different radiating time period $[0, t_2]$.
The smallest strips $K_D^{(\hat x)}$  and circular shape of the source support are clearly identified, and the boundary of the disk is well resolved.
This example demonstrates that the proposed method is capable of accurately
recovering simple convex geometries from a limited number of observation
directions and frequencies. We also observe that, as the radiation period of the noise signal increases,
the reconstruction quality slightly deteriorates, leading to a mild loss of
accuracy in both the shape and the location of the identified support.
Nevertheless, the overall geometry of the disk remains clearly recognizable.

\medskip
\noindent
\textbf{Example 2 (Kite-shaped support).}
In the second example, the source is supported in a kite-shaped domain defined
by the parametric curve
\[
x(\theta)
= (x_1,x_2)
+ \bigl(
r\cos\theta + a r (\cos 2\theta - 1),
\;
b r \sin\theta
\bigr),
\qquad \theta\in[0,2\pi],
\]
with parameters $a=0.65$, $b=1.5$, and $r=1$.

The reconstructed images using far-field data from 12 observation directions for a kite-shaped source support with different $K(x,t)$ and  radiating time period $[0, 0.1]$ are shown in Figure~\ref{fig:2-exa2}.
For subfigures (a), (c), and (e), the center of the kite-shaped support is located
at the origin, whereas in subfigures (b), (d), and (k) the support is shifted away
from the origin.
It can be observed from subfigures (a), (d), (e), and (k) that the proposed
factorization-based method is able to accurately recover the location of the
kite-shaped support as well as its convex hull, even in the presence of strong
non-convexity and sharp geometric features.
In contrast, for subfigures (b) and (c), only partial information on the support
boundary is obtained.
In these cases, the imaging functional mainly highlights segments of the support
edges, while the interior region is not fully reconstructed.
This phenomenon can be explained by the relatively weak effective signal
intensity associated with these configurations.
From an operator-theoretic point of view, the corresponding multi-frequency
far-field correlation operator exhibits faster spectral decay, which reduces
the contribution of informative singular components and consequently limits
the reconstruction capability of the factorization method.
Overall, this example demonstrates that the proposed approach can reliably
recover both the position and the convex hull of a kite-shaped support in most
configurations, while also illustrating the intrinsic dependence of the
reconstruction quality on the effective strength of the measured data.

\medskip
\noindent
\textbf{Example 3 (Rounded square support).}
The third example considers a rounded square-shaped support given by
\[
x(\theta)
= (x_1,x_2)
+ r(\cos^3\theta+\cos\theta,\ \sin^3\theta+\sin\theta),
\qquad \theta\in[0,2\pi],
\]
with $r=0.8$.
The reconstruction result is shown in Figure~\ref{fig:2-exa3}.
Although the boundary contains flat regions and rounded corners, the proposed
method still yields a stable and accurate identification of the overall shape.
However, it can be observed that the reconstruction near the four rounded
corners is less accurate. This phenomenon can be attributed to the limited angular coverage and the
relatively small number of observation directions.

\medskip
\noindent
\textbf{Example 4 (Elliptic support).}
In the final example, the noise source is supported in an ellipse described by
\[
x(\theta)
= (x_1,x_2) + (a\cos\theta,\ b\sin\theta),
\qquad \theta\in[0,2\pi],
\]
where $a=1.5$, and $b=0.8$.
Figure~\ref{fig:2-exa4} presents the corresponding reconstruction.
The elongation and orientation of the ellipse are clearly recovered, further
confirming the capability of the method to distinguish different aspect ratios
of source supports. It is worth noting that the reconstruction quality in this example is very
similar to that observed for the kite-shaped support in Example~2.
This is consistent with the fact that the signal intensity functions are
constructed in a comparable manner in both cases, leading to multi-frequency
data of similar effective strength.
As a result, the proposed method exhibits nearly identical reconstruction
performance for these two geometrically different source supports.


\subsection{Numerical examples in $\mathbb{R}^3$}

In this subsection, we present three-dimensional numerical experiments to
further validate the effectiveness of the proposed multi-frequency
factorization method.
Compared with the two-dimensional case, the three-dimensional setting poses
additional challenges due to the increased complexity of the geometry and the
limited-view nature of practical observations.

The frequency bandwidth is chosen in the same way as that used in the two-dimensional setting.
In each numerical experiment, multi-frequency data are computed for the
following three sets of observation directions:
\begin{itemize}
  \item The first set consists of three canonical directions,
  \[
  \hat x=\{(1,0,0),(0,1,0),(0,0,1)\}.
  \]
  \item The second set contains $10$ observation directions uniformly
  distributed on the upper hemisphere.
  \item The third set contains $20$ observation directions uniformly
  distributed on the upper hemisphere.
\end{itemize}
These three configurations are used to illustrate the influence of the number
of observation directions on the reconstruction quality.
The sampling region is chosen as a cubic domain containing the support of the
source and discretized by a uniform three-dimensional grid.
For each sampling point, the indicator function associated with the
factorization method is evaluated for visualization. The reconstructed and original source supports is further projected onto the coordinate planes $y_1 y_2$, $y_1 y_3$, and $y_2 y_3$, where the corresponding projection boundaries are delineated by red curves to facilitate comparison.

\medskip
\noindent
\textbf{Example 1 (Cube-shaped support).}
In the first example, the noise source $S(x,t)$ is supported in a cube (see Figure~\ref{fig:3-cube-ma}(a)),
\[
D=\{(x_1,x_2,x_3)\in\mathbb{R}^3:\ |x_1|\le 1,\ |x_2|\le 1,\ |x_3|\le 1\}.
\]
The reconstruction results for different sets of observation directions are
shown in Figure~\ref{fig:3-cube-1a} and ~\ref{fig:3-cube-ma}.
Figure~\ref{fig:3-cube-ma} illustrates the reconstructions of $K_D^{(\hat x)}$ using far-field data under different observation directions.
When data from a single observation direction $\hat x$ are employed, the reconstructed indicator clearly exhibits a slab-like structure, which is consistent with the projection of the true source support onto the coordinate planes.
In Figure~\ref{fig:3-cube-ma}(b), three carefully chosen observation directions are used, and the overall shape of the source support is well reconstructed.
In Figures~\ref{fig:3-cube-ma}(c) and~\ref{fig:3-cube-ma}(d), far-field data from $10$ and $20$ observation directions, respectively, uniformly distributed on the upper hemisphere are adopted.
In these cases, the location of the source support is accurately identified; however, the reconstructed shape appears smoother than the true cube.
This smoothing effect is mainly caused by measurement noise and data discretization, which tend to suppress high-frequency components and thus round off the sharp corners of the cube.

Figures~\ref{fig:3-cube-1b} and~\ref{fig:3-cube-mb} present the reconstruction results for the same cube-shaped source support as in Figures~5 and~6, except that the cube is no longer centered at the origin.
Figure~\ref{fig:3-cube-1b}shows the reconstruction obtained from a single observation direction, while Figure~\ref{fig:3-cube-mb} corresponds to multiple observation directions.
Compared with Figures~5 and~6, the spatial location of the translated source support is still accurately identified; however, for multiple uniformly distributed observation directions, the reconstruction of the shape becomes less accurate, with the sharp corners appearing smoother.
This effect is mainly due to the loss of geometric symmetry and the presence of noise, which reduce the sensitivity to high-frequency shape features.

It is observed that, after translating the cube away from the origin, the reconstruction obtained from multiple uniformly distributed observation directions exhibits a degradation in shape recovery, while the location of the source support remains accurately identified.
This phenomenon can be attributed to the loss of geometric symmetry and the limited observation aperture.
For an off-centered target, the uniformly distributed observation directions on the upper hemisphere no longer provide balanced angular coverage of the source support, which leads to a reduced sensitivity to high-frequency geometric features, such as sharp edges and corners.
Moreover, in the presence of noise and finite bandwidth, the multi-directional data tend to suppress high-frequency components, resulting in a smoothing effect on the reconstructed shape.

\medskip
\noindent
\textbf{Example 2 (Ball-shaped support).}
In the second example, the noise source is supported in a ball,
\[
D=\{(x_1,x_2,x_3)\in\mathbb{R}^3:\ x_1^2+x_2^2+x_3^2\le 1\}.
\]
Figures~\ref{fig:3-ball-ma} show the reconstruction results for a ball-shaped source support, where the true support is displayed in Figure~\ref{fig:3-ball-ma}~(a).
Figure~\ref{fig:3-ball-ma}~(b)-(d) present the reconstructions obtained from far-field data corresponding to $3$, $10$, and $20$ observation directions, respectively.
Figure~\ref{fig:3-ball-ma} considers a ball centered at the origin, while Figure~\ref{fig:3-ball-mb} illustrates the case with an off-origin center.
In all these configurations, the reconstructed supports are in good agreement with the true ones, and increasing the number of observation directions leads to more stable reconstructions.
Compared with the cube-shaped support, the ball-shaped support tends to exhibit slightly improved reconstruction quality, which may be attributed to its smooth boundary and the absence of sharp edges or corners, leading to a more favorable behavior of the inversion process.

\medskip
\noindent
\textbf{Example 3 (Ellipsoidal support).}
The third example considers an ellipsoidal support given by
\[
D=\{(x_1,x_2,x_3)\in\mathbb{R}^3:\ x_1^2+9x_2^2+4x_3^2\le 1\}.
\]
The reconstruction results are shown in Figure~\ref{fig:3-ellip-ma}.
Despite the anisotropic geometry of the ellipsoid, a satisfactory reconstruction
can already be achieved using data from a small number of observation directions.
As the number of observation directions increases, the elongation and orientation
of the ellipsoid are recovered with improved accuracy.
This example demonstrates that the proposed method is capable of resolving
anisotropic source supports in three dimensions, even with limited angular
information. Figures~\ref{fig:3-ellip-ma} and~\ref{fig:3-ellip-mb} show the reconstruction results for an ellipsoidal source support, corresponding to the same experimental settings as in Figures~\ref{fig:3-ball-ma} and~\ref{fig:3-ball-mb}.
The reconstructions exhibit similar behavior and comparable accuracy to the ball-shaped case.

\medskip
\noindent
The three-dimensional numerical experiments confirm that the proposed
multi-frequency factorization method remains effective in higher dimensions.
While limited observation data may lead to reduced resolution, incorporating a
larger number of observation directions substantially improves the accuracy
and stability of the reconstruction.
These results are in full agreement with the theoretical analysis and further
demonstrate the applicability of the method to practical three-dimensional
inverse source problems.

\begin{remark}
    In inverse source problems, the reconstruction of sharp geometric features
such as corners or cusps is intrinsically challenging.
This difficulty stems from the smoothing nature of the source-to-field
mapping, in which high-frequency spatial information associated with
non-smooth source supports is significantly attenuated in the measured
far-field data.
Under finite-frequency and limited-aperture measurements, such information
becomes particularly unstable, so that imaging methods tend to reliably
recover the location and convex hull of the source support, while finer
boundary details are naturally smoothed out.
\end{remark}

%
%

%
%
%
%
%
%
%
%
%
%

\begin{figure}
\centering
\subfigure[$t_2=0.1, \hat x=(1,0)$ ]{
\includegraphics[scale=0.22]{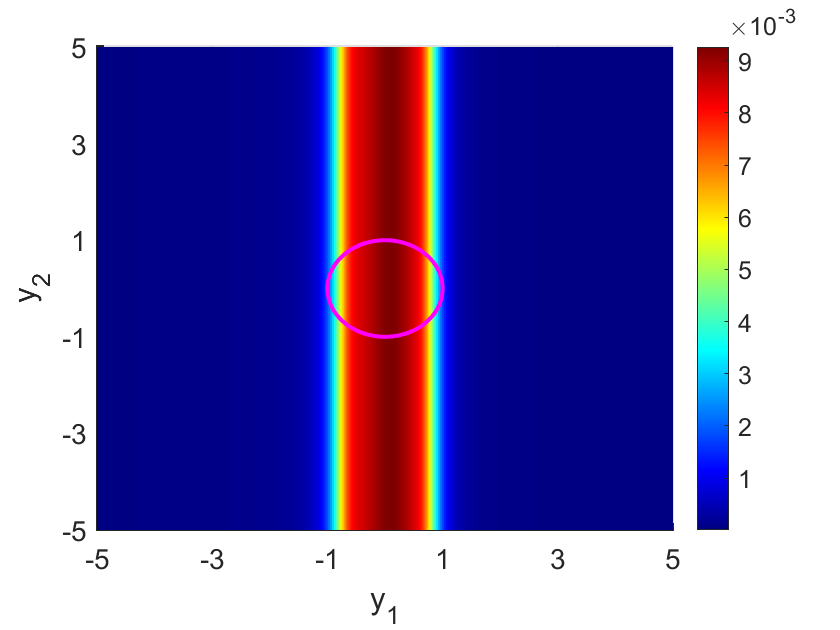}
}
\subfigure[$t_2=0.1, \hat x=(\sqrt 2/2,\sqrt 2/2)$]{
\includegraphics[scale=0.22]{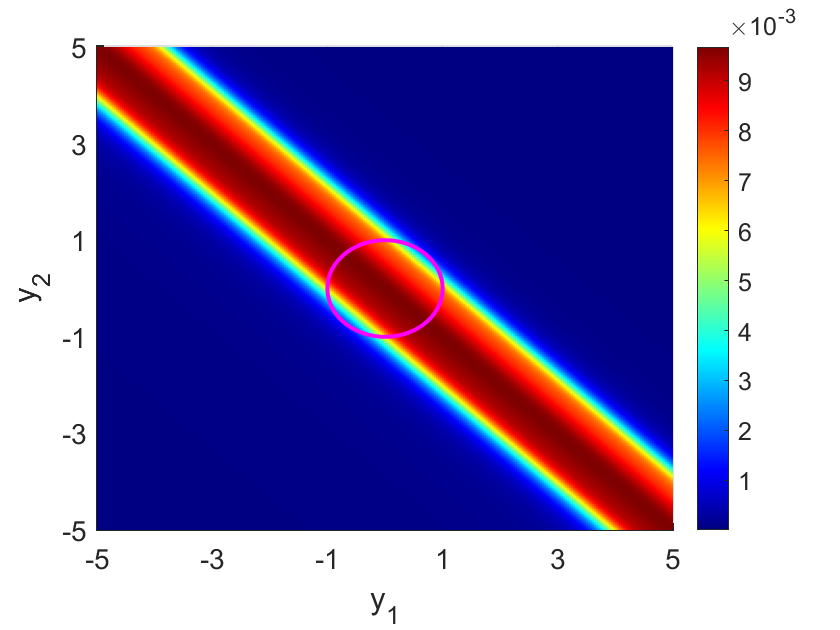}

}
\subfigure[$t_2=0.1, \hat x=(0,1)$ ]{
\includegraphics[scale=0.22]{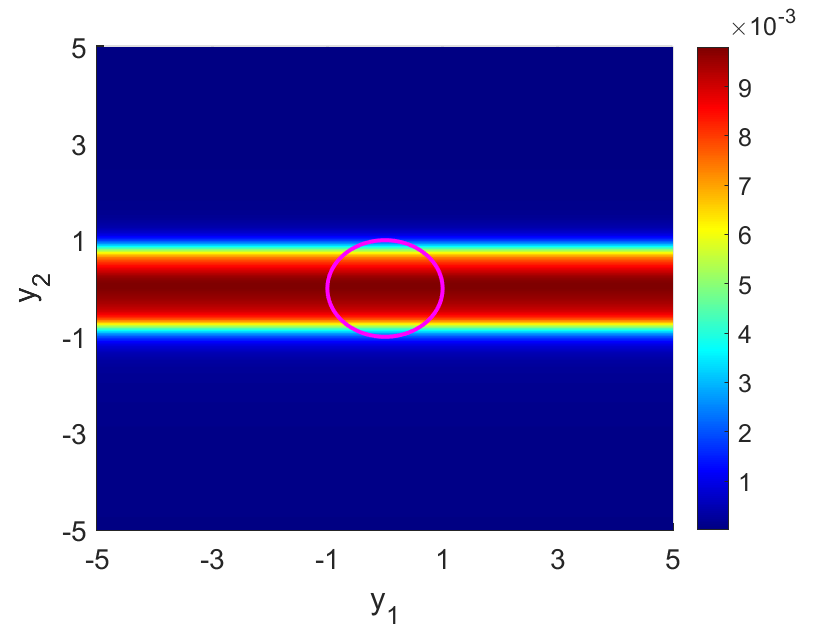}
}

\subfigure[$t_2=0.1, N_{\hat x}=12$]{
\includegraphics[scale=0.22]{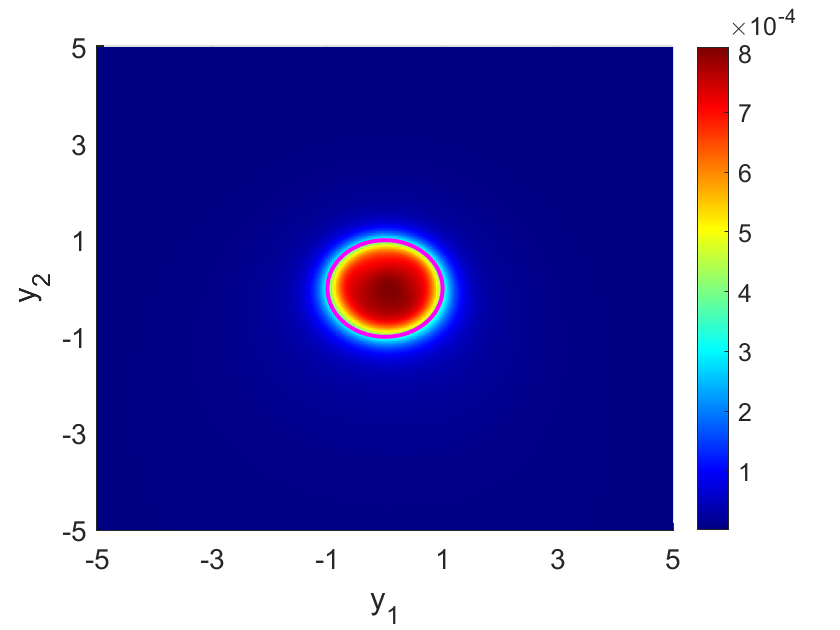}

}
\subfigure[$t_2=1, N_{\hat x}=12$ ]{
\includegraphics[scale=0.22]{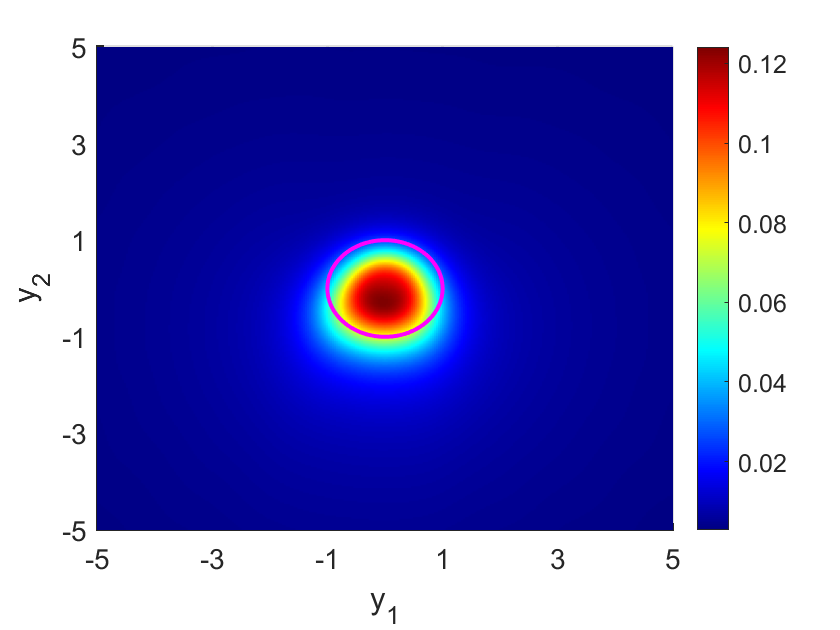}
}
\subfigure[$t_2=2, N_{\hat x}=12$]{
\includegraphics[scale=0.22]{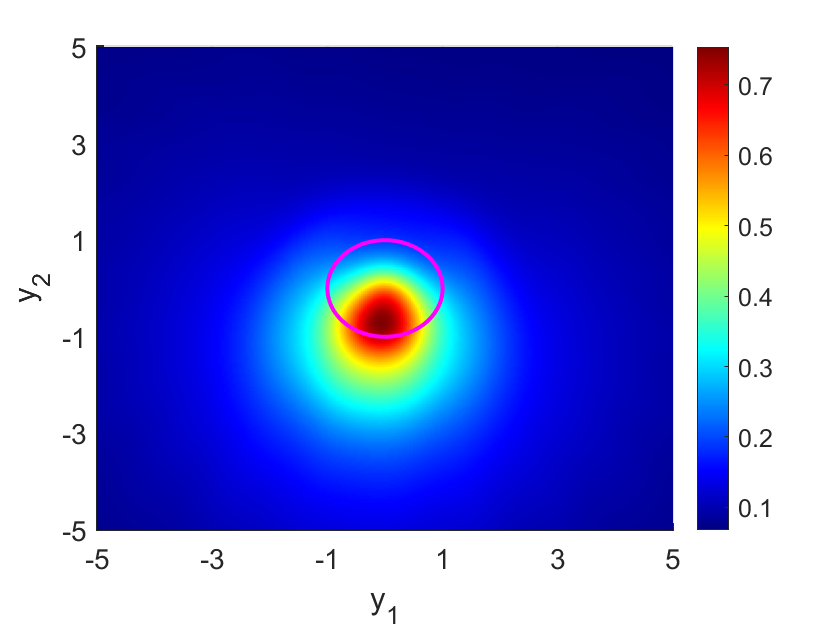}

}
\caption{Reconstructions using far-field data from one or 12 observation directions for a disk-shaped source support with $K(x,t)=t+1$ and different radiating time period $[0, t_2]$.
} \label{fig:2-exa1}
\end{figure}

\begin{figure}
\centering
\subfigure[$K(x,t)=t+1$]{
\includegraphics[scale=0.22]{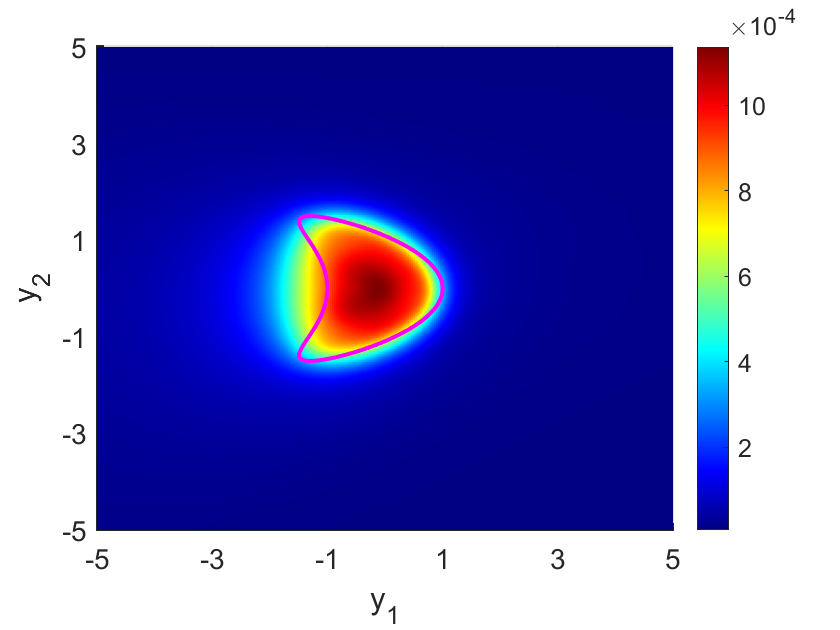}

}
\subfigure[$K(x,t)=(x_1^2+x_2^2)(t+1)$]{
\includegraphics[scale=0.22]{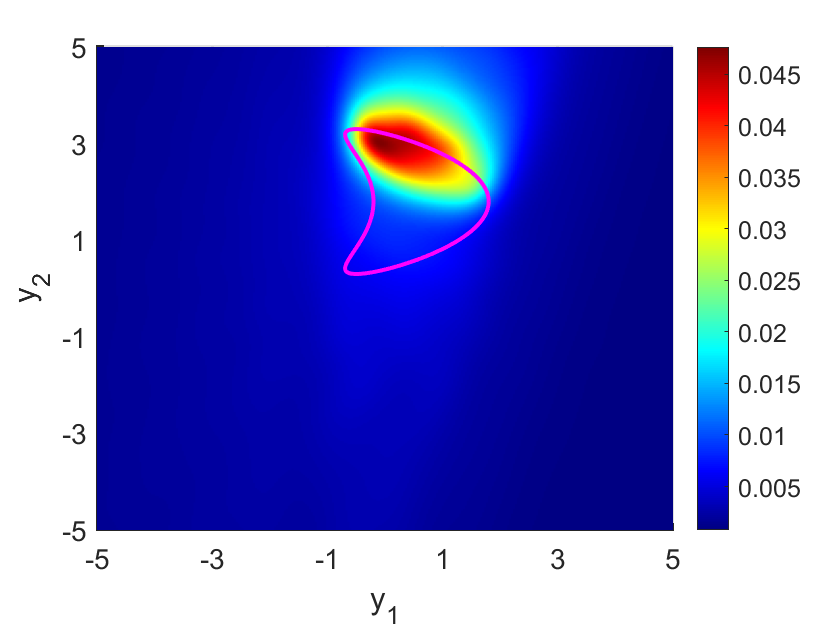}

}
\subfigure[$K(x,t)=(x_1^2+x_2^2)(t+1)$]{
\includegraphics[scale=0.22]{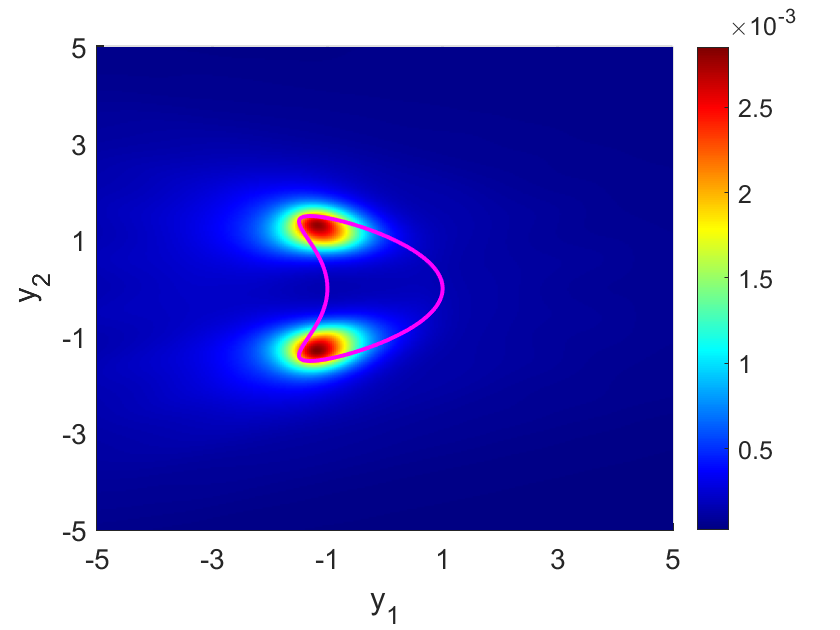}

}

\subfigure[$K(x,t)=t+1$]{
\includegraphics[scale=0.22]{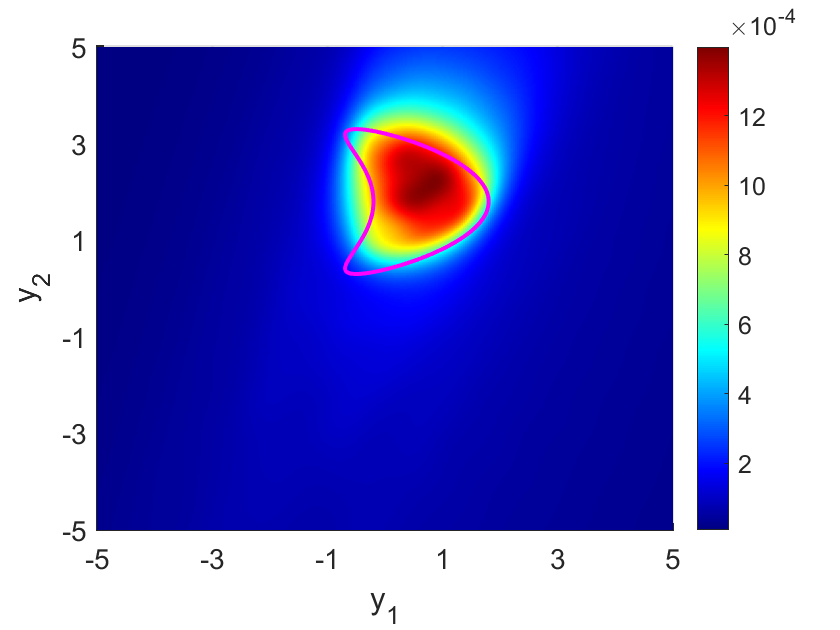}

}
\subfigure[$K(x,t)=(x_1^2+x_2^2+4)(t+1)$]{
\includegraphics[scale=0.22]{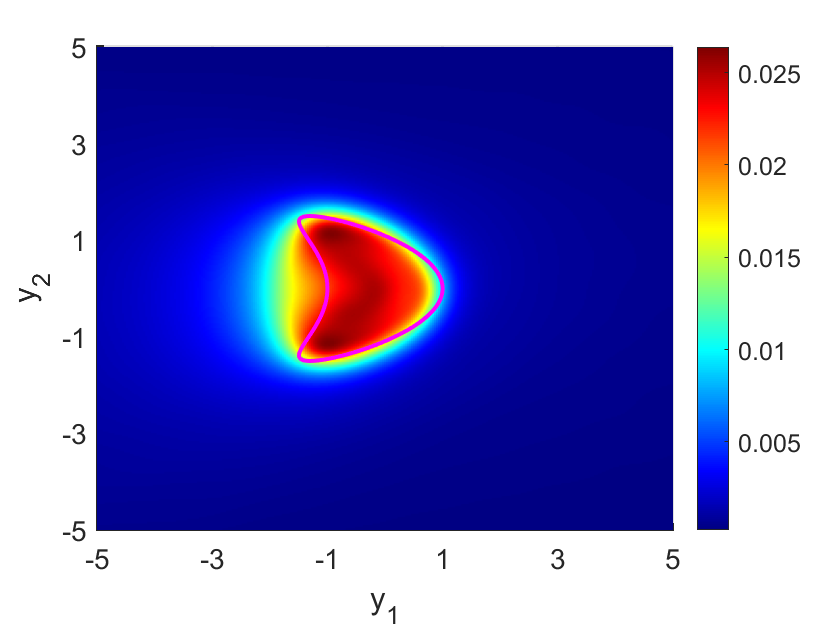}

}
\subfigure[$K(x,t)=(\sin(x_1+2x_2)+4)(t+1)$]{
\includegraphics[scale=0.22]{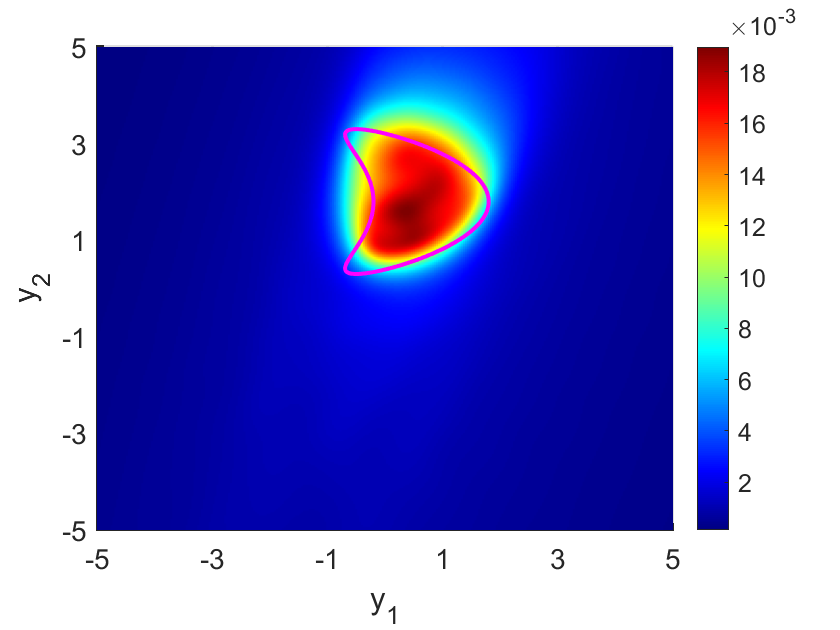}

}
\caption{Reconstructions using far-field data from 12 observation directions for a kite-shaped source support with different $K(x,t)$ and  radiating time period $[0, 0.1]$.
} \label{fig:2-exa2}
\end{figure}

\begin{figure}
\centering
\subfigure[$K(x,t)=t+1$]{
\includegraphics[scale=0.22]{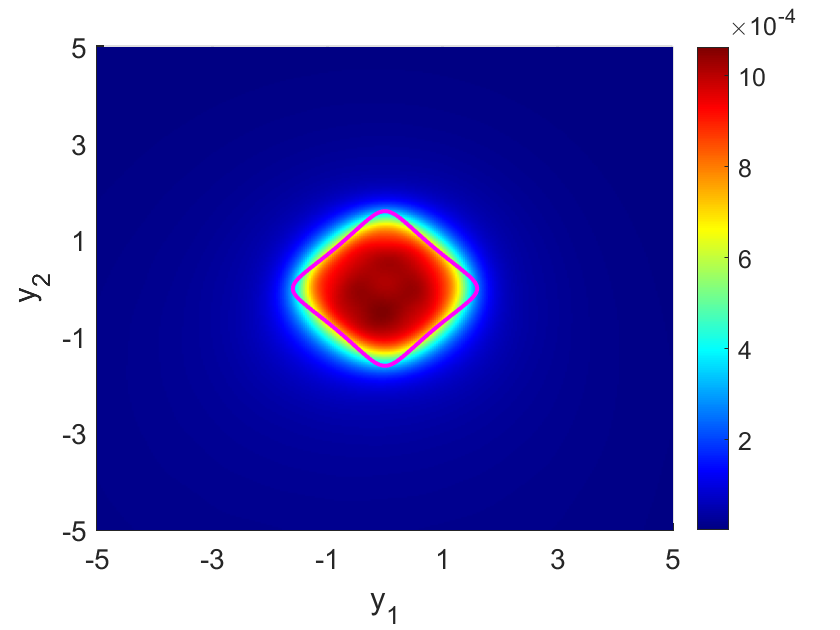}

}
\subfigure[$K(x,t)=x_1(t+1)$]{
\includegraphics[scale=0.22]{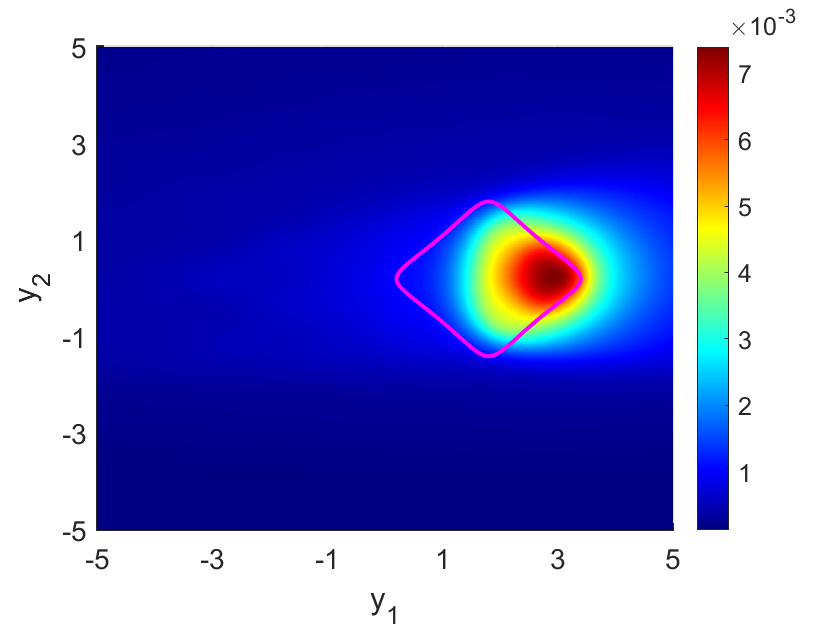}

}
\subfigure[$K(x,t)=(x_1^2+x_2^2)(t+1)$]{
\includegraphics[scale=0.22]{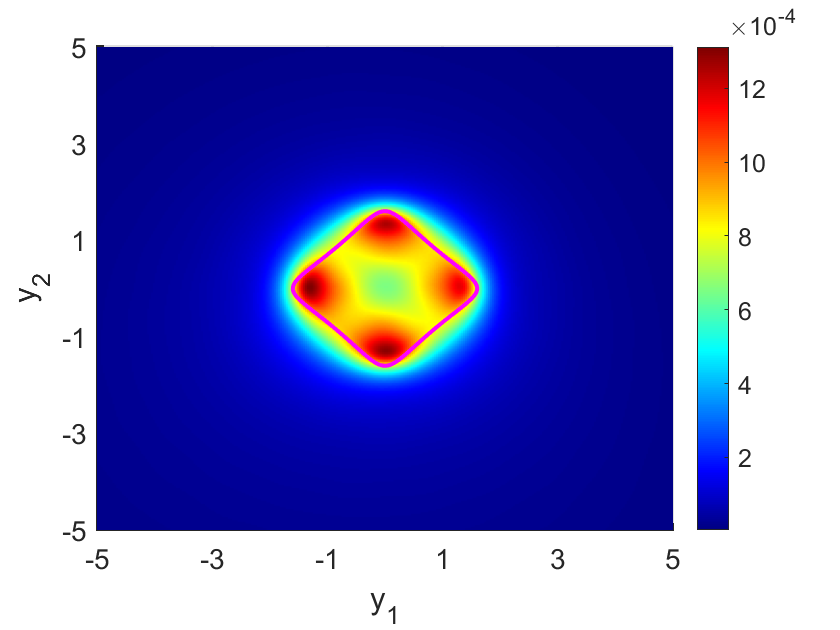}

}

\subfigure[$K(x,t)=t+1$]{
\includegraphics[scale=0.22]{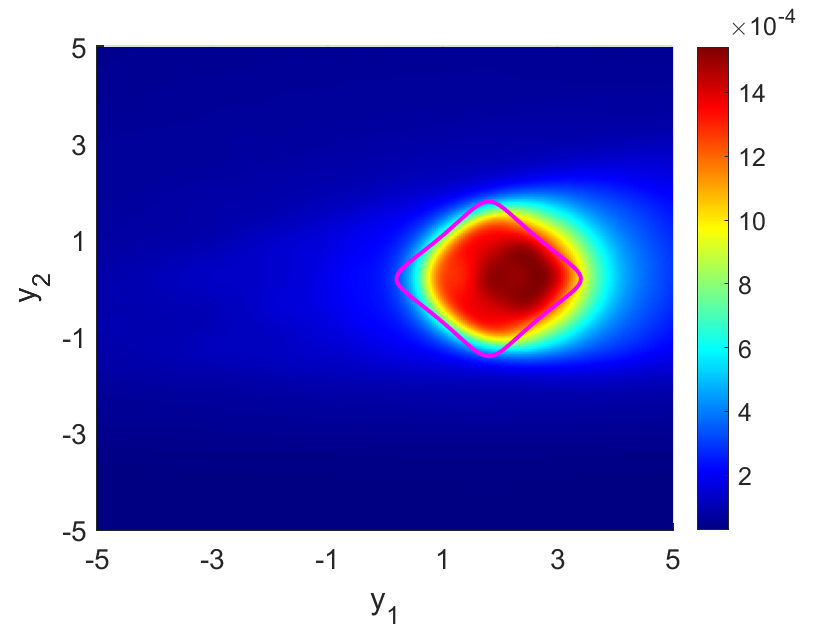}

}
\subfigure[$K(x,t)=(x_1^2+x_2^2+4)(t+1)$]{
\includegraphics[scale=0.22]{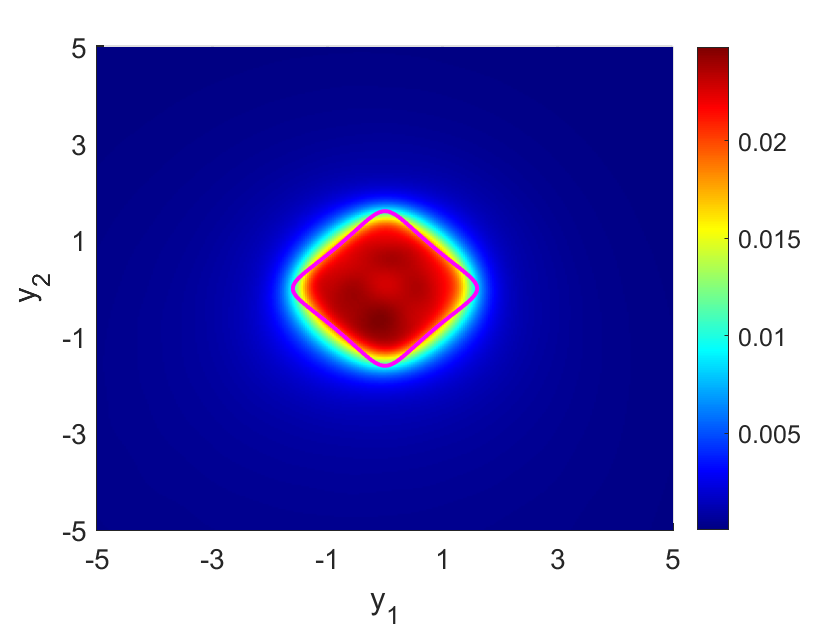}

}
\subfigure[$K(x,t)=(3-0.5((x_1-1.8)^2+(x_2-0.2)^2))(t+1)$]{
\includegraphics[scale=0.22]{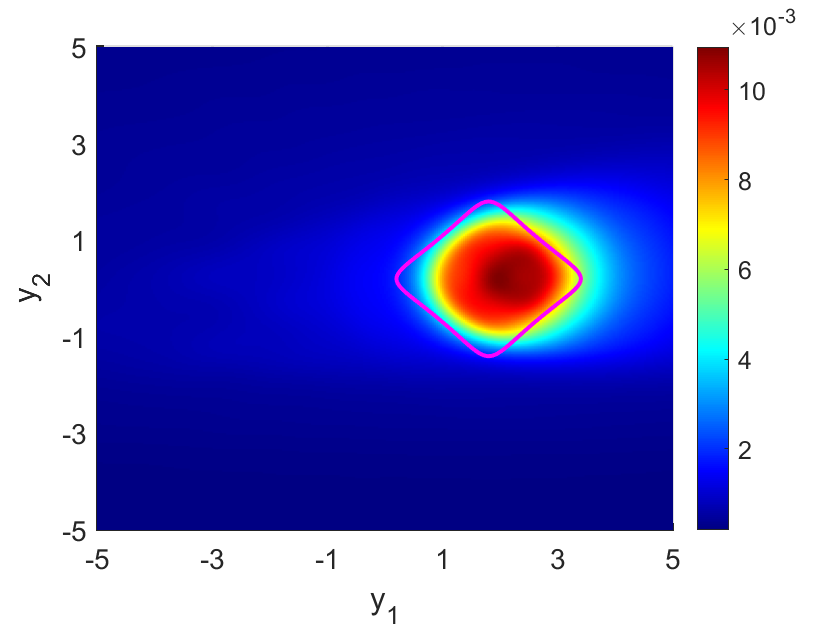}

}
\caption{Reconstructions using far-field data from using far-field data from 12 observation directions for a rounded square source support with different $K(x,t)$ and  radiating time period $[0, 0.1]$
} \label{fig:2-exa3}
\end{figure}

\begin{figure}
\centering
\subfigure[$K(x,t)=t+1$]{
\includegraphics[scale=0.22]{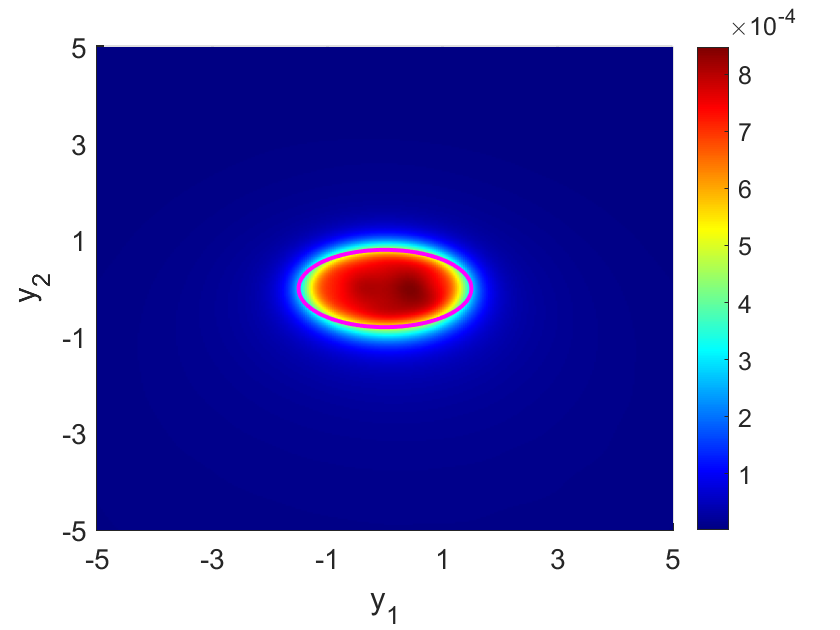}

}
\subfigure[$K(x,t)=(x_1+2x_2^2)(t+1)$]{
\includegraphics[scale=0.22]{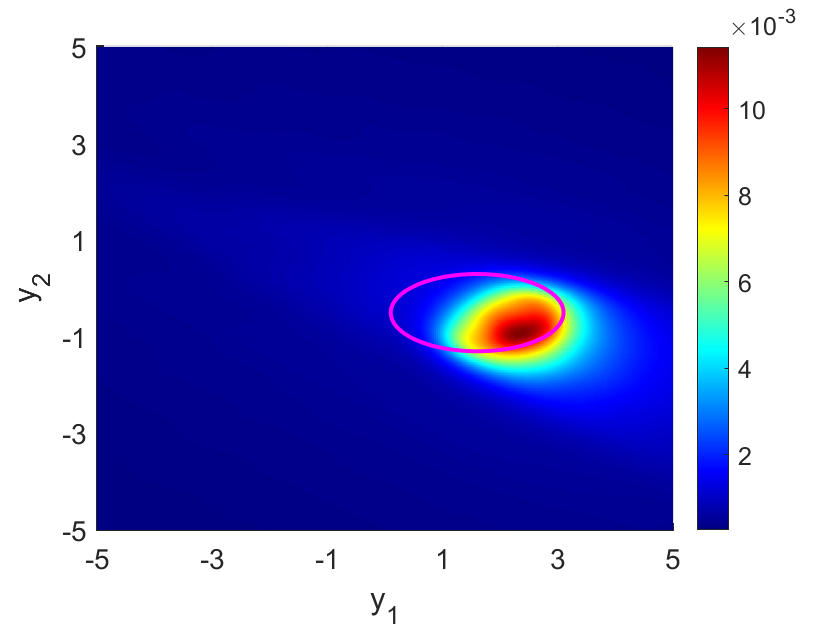}

}
\subfigure[$K(x,t)=(x_1^2+x_2^2)(t+1)$]{
\includegraphics[scale=0.22]{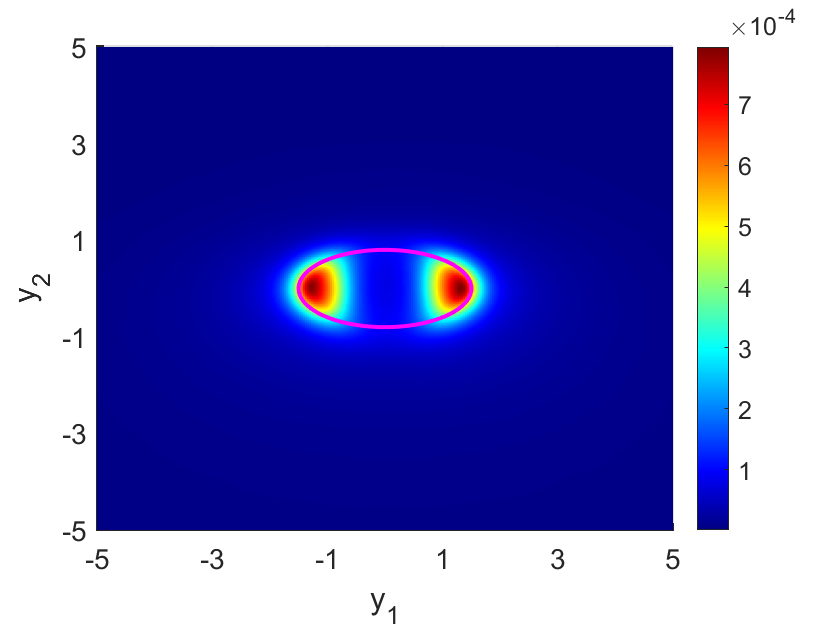}

}

\subfigure[$K(x,t)=t+1$]{
\includegraphics[scale=0.22]{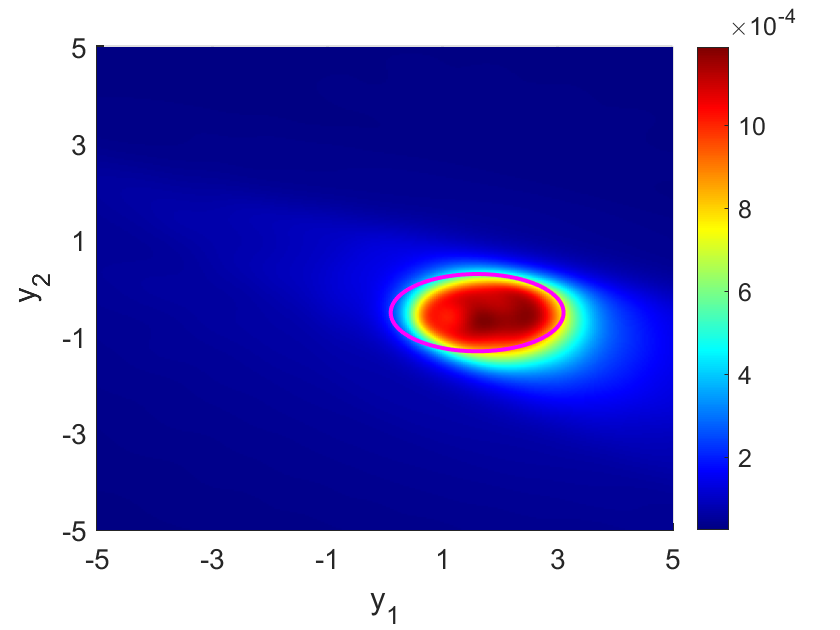}

}
\subfigure[$K(x,t)=(x_1^2+x_2^2+4)(t+1)$]{
\includegraphics[scale=0.22]{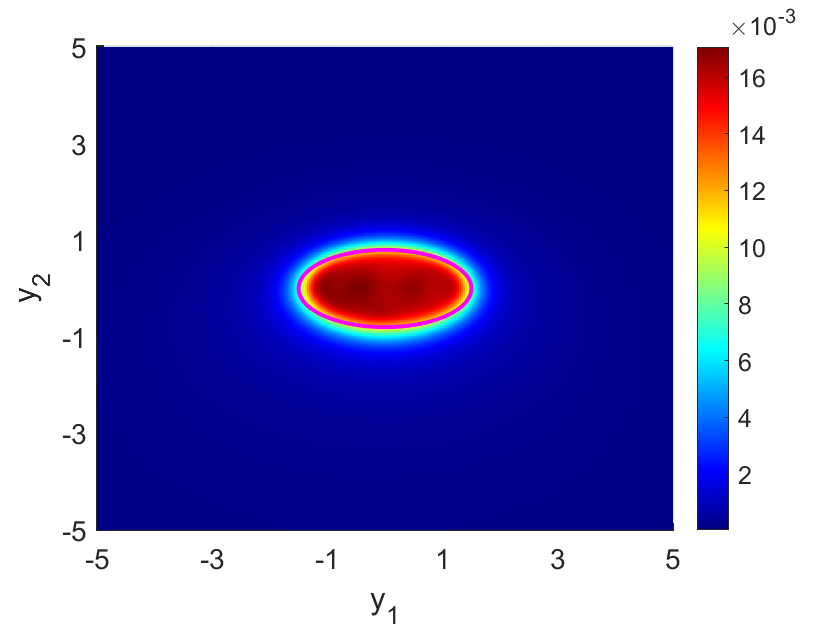}

}
\subfigure[$K(x,t)=(0.5\sin(2x_1)+\cos(x_2)+3)(t+1)$]{
\includegraphics[scale=0.22]{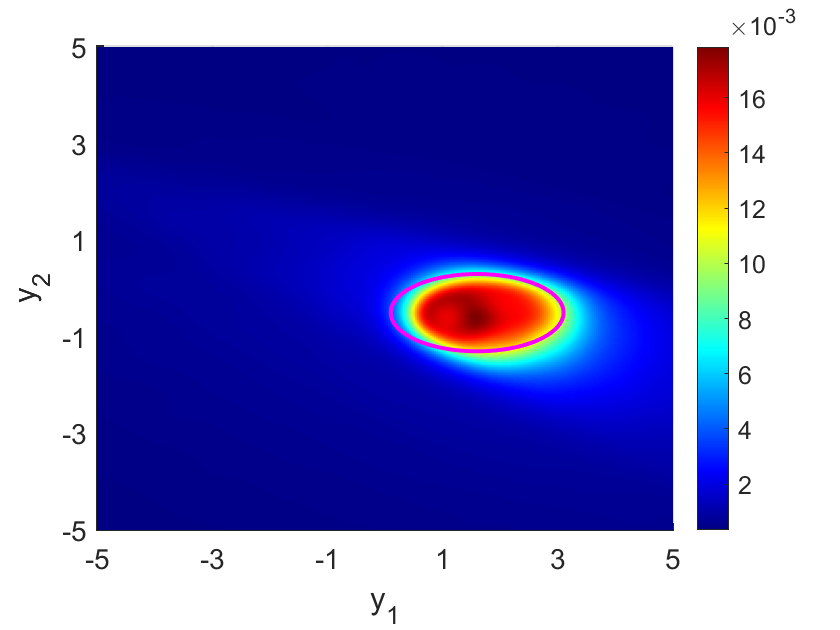}

}
\caption{Reconstructions using far-field data from 12 observation directions for an elliptic source support with different $K(x,t)$ and  radiating time period $[0, 0.1]$
} \label{fig:2-exa4}
\end{figure}


\begin{figure}
\centering
\subfigure[$\hat x=(1, 0, 0)$]{
\includegraphics[scale=0.2]{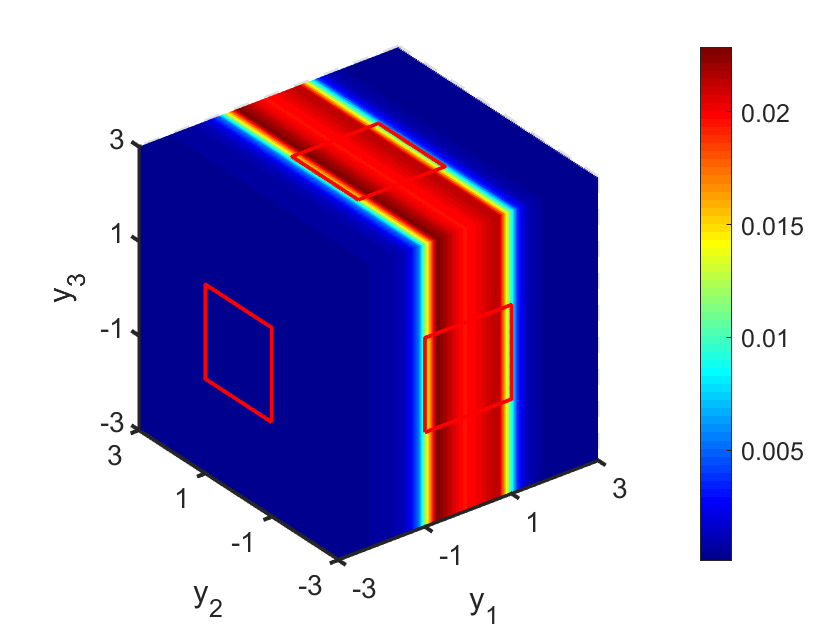}

}
\subfigure[$\hat x= (0,1,0)$]{
\includegraphics[scale=0.2]{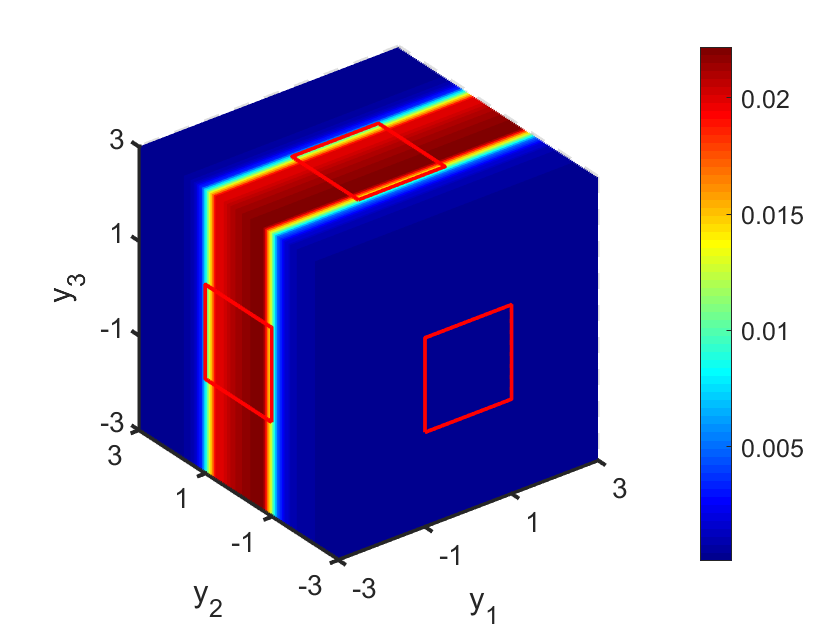}

}
\subfigure[$\hat x =(0,0, 1)$]{
\includegraphics[scale=0.2]{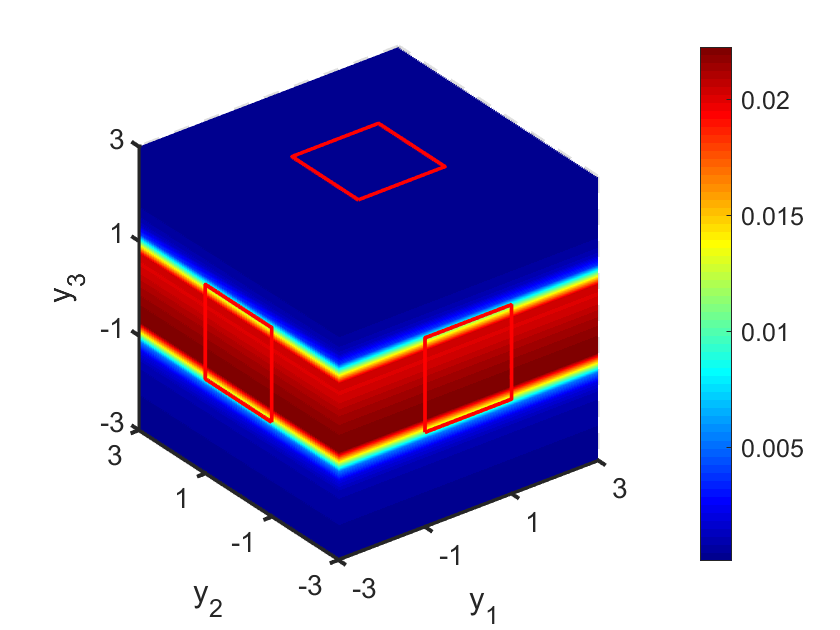}

}
\caption{Reconstructions using far-field data from a single observation direction for a cube-shaped support centered at the origin.
} \label{fig:3-cube-1a}
\end{figure}

\begin{figure}
\centering
\subfigure[Cube-shaped support]{
\includegraphics[scale=0.16]{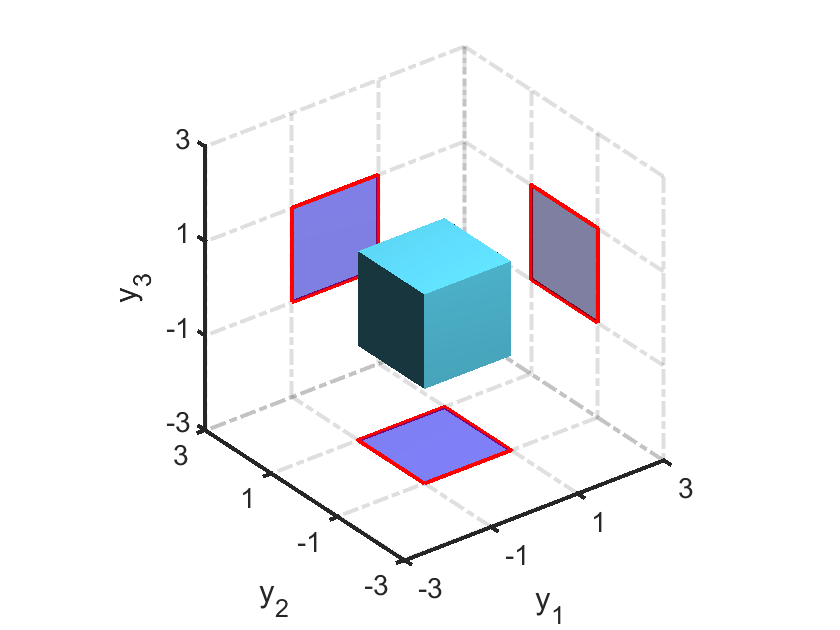}

}
\subfigure[$J=3$]{
\includegraphics[scale=0.16]{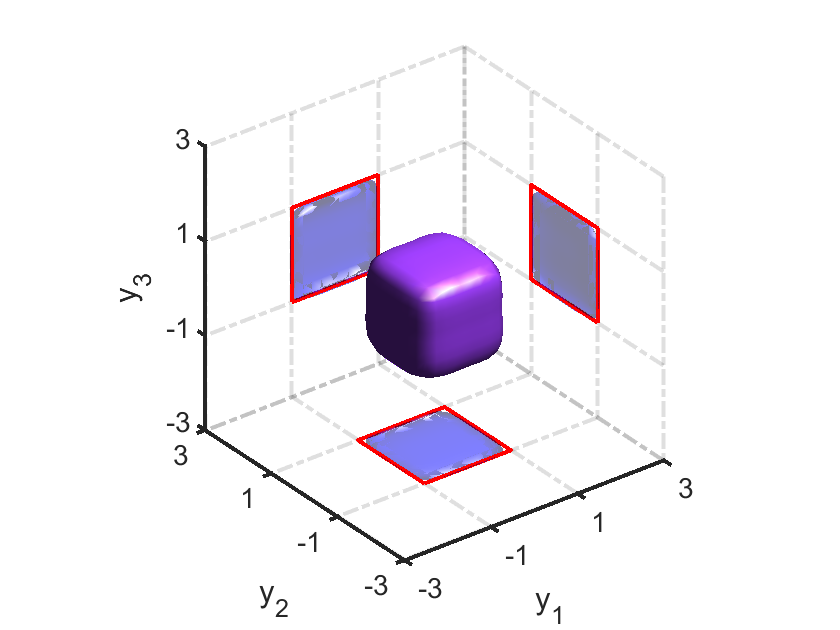}

}
\subfigure[$J=10$]{
\includegraphics[scale=0.16]{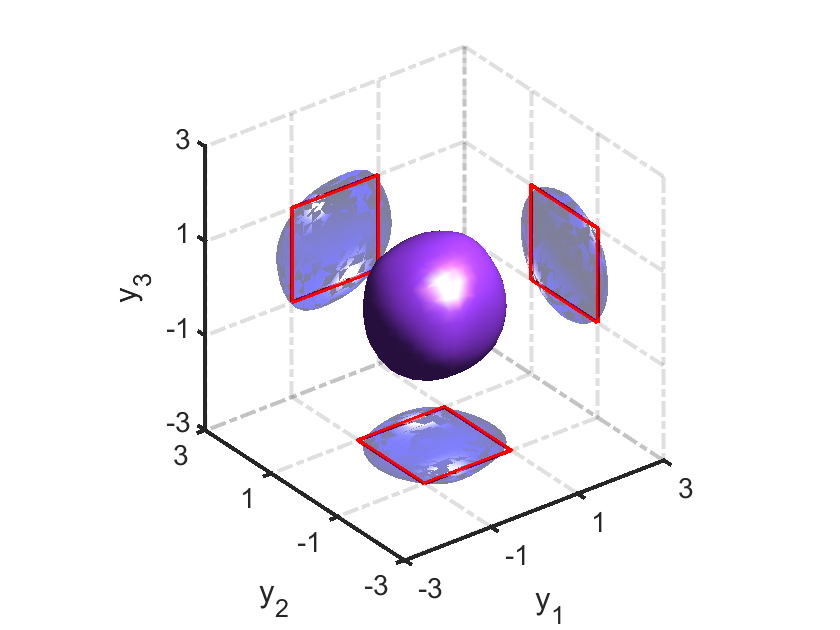}

}
\subfigure[$J=20$]{
\includegraphics[scale=0.16]{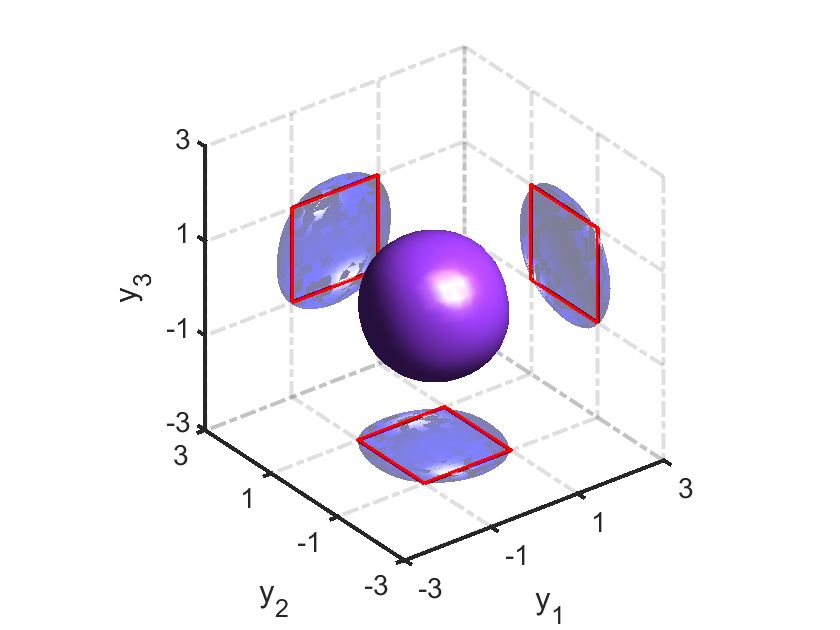}

}
\caption{Reconstructions using far-field data from multiple observation directions for a cube-shaped support centered at the origin.
} \label{fig:3-cube-ma}
\end{figure}

\begin{figure}
\centering
\subfigure[$\hat x=(1,0,0)$]{
\includegraphics[scale=0.2]{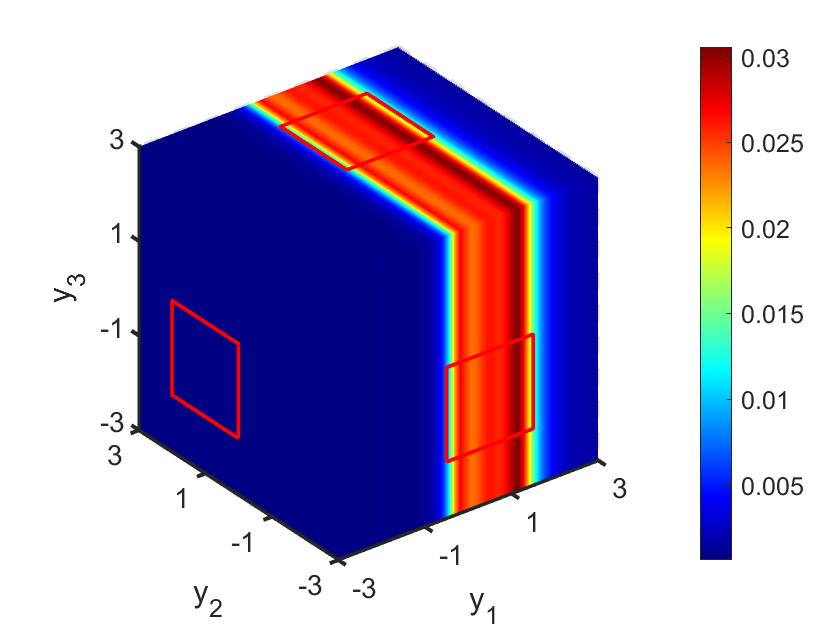}

}
\subfigure[$\hat x =(0,1,0)$]{
\includegraphics[scale=0.2]{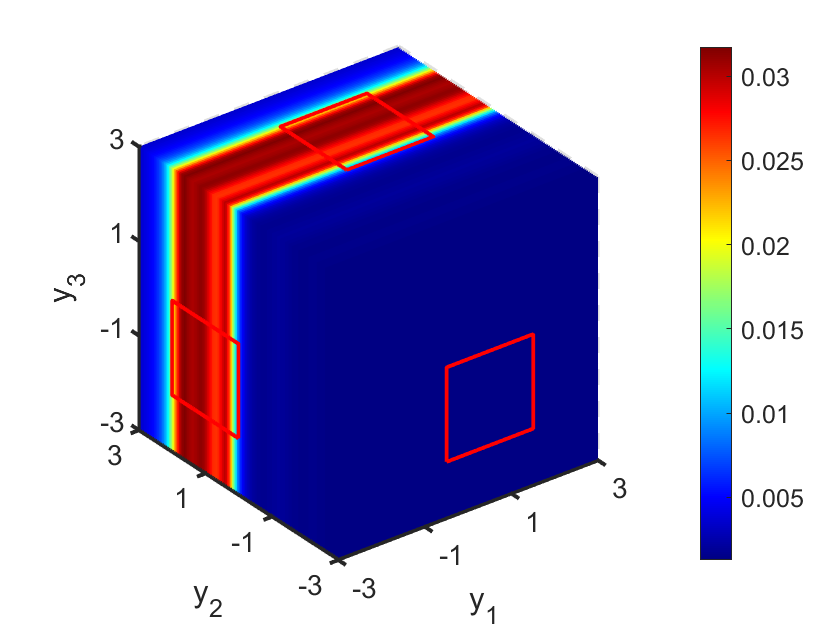}

}
\subfigure[$\hat x =(0,0,1)$]{
\includegraphics[scale=0.2]{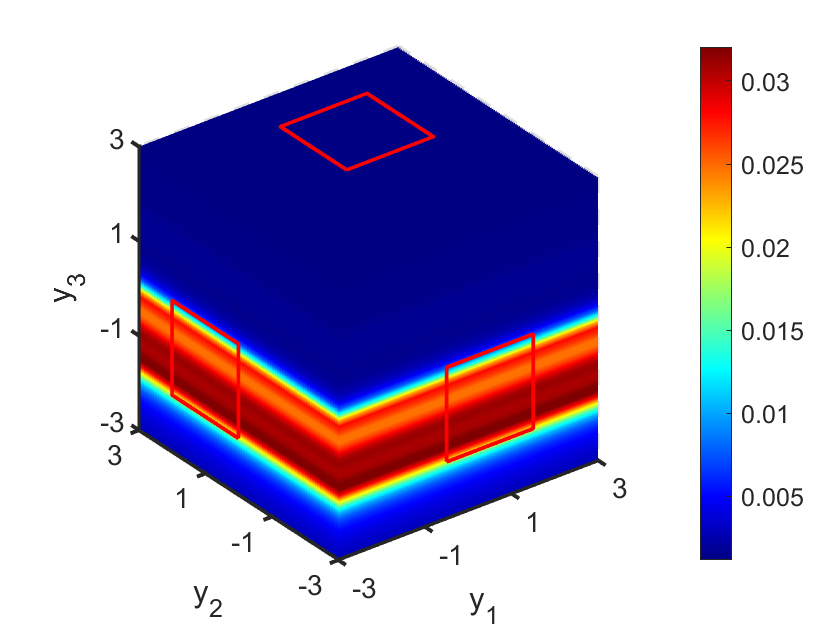}

}
\caption{Reconstructions using far-field data from a single observation direction for a cube-shaped support with an off-origin center.
} \label{fig:3-cube-1b}
\end{figure}

\begin{figure}
\centering
\subfigure[Cube-shaped support]{
\includegraphics[scale=0.16]{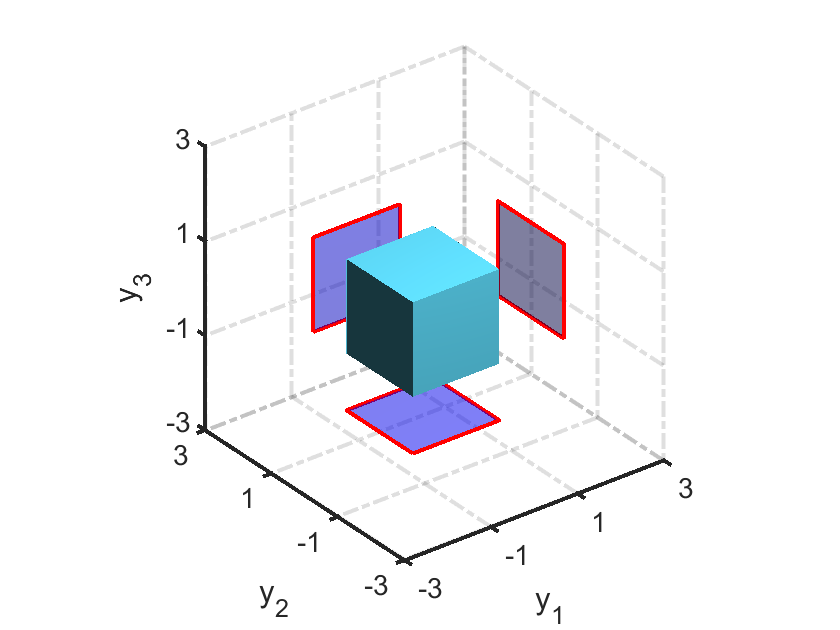}

}
\subfigure[$J=3$]{
\includegraphics[scale=0.16]{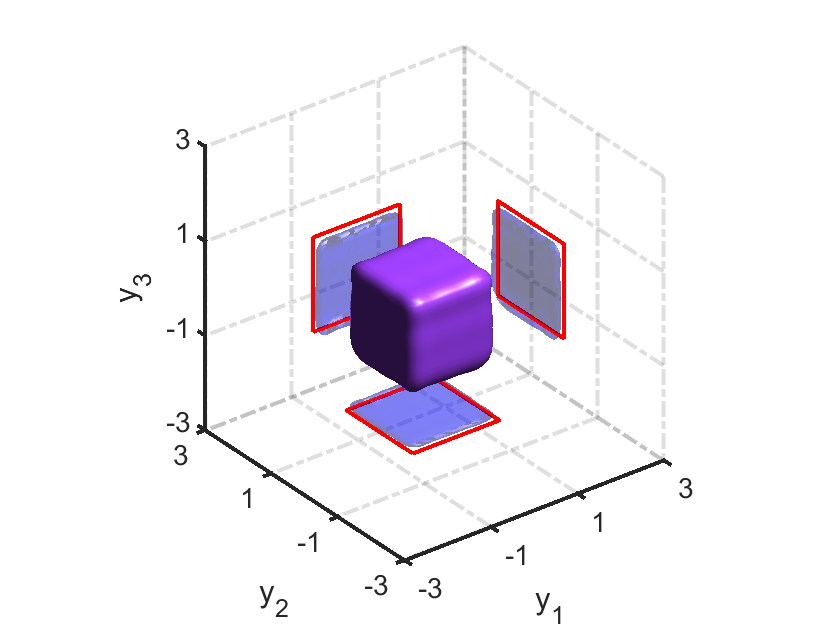}

}
\subfigure[$J=10$]{
\includegraphics[scale=0.16]{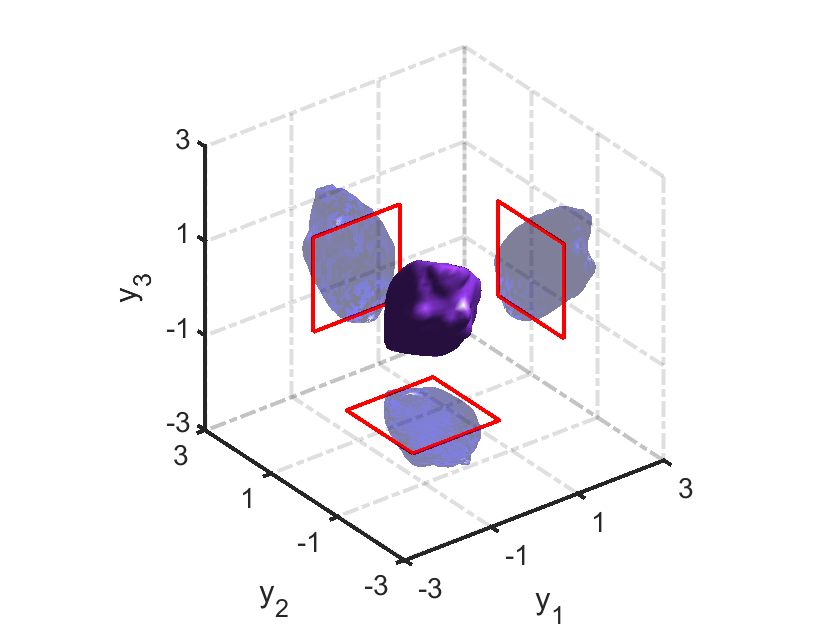}

}
\subfigure[$J=20$]{
\includegraphics[scale=0.16]{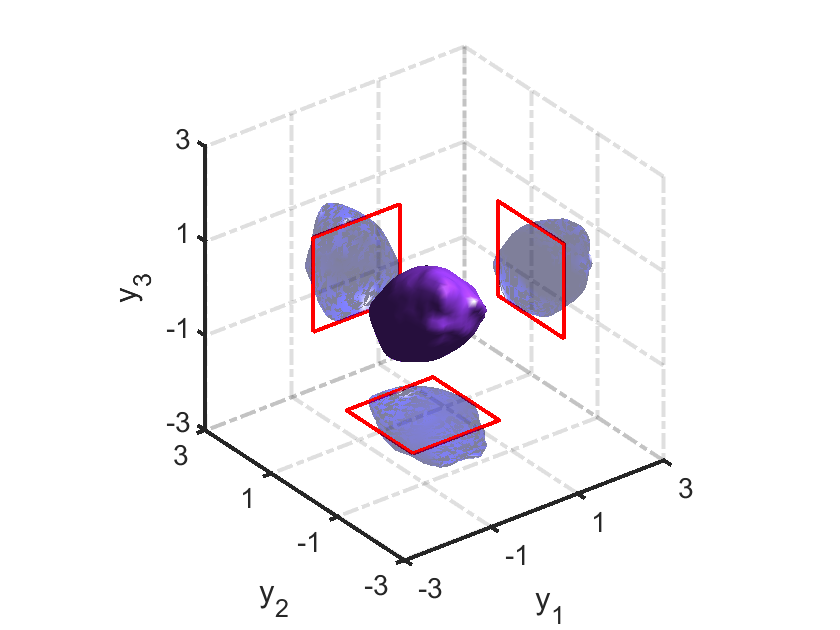}

}


\caption{Reconstructions using far-field data from multiple observation directions for a cube-shaped support with an off-origin center.
} \label{fig:3-cube-mb}
\end{figure}

\begin{figure}
\centering
\subfigure[Ball-shaped support]{
\includegraphics[scale=0.16]{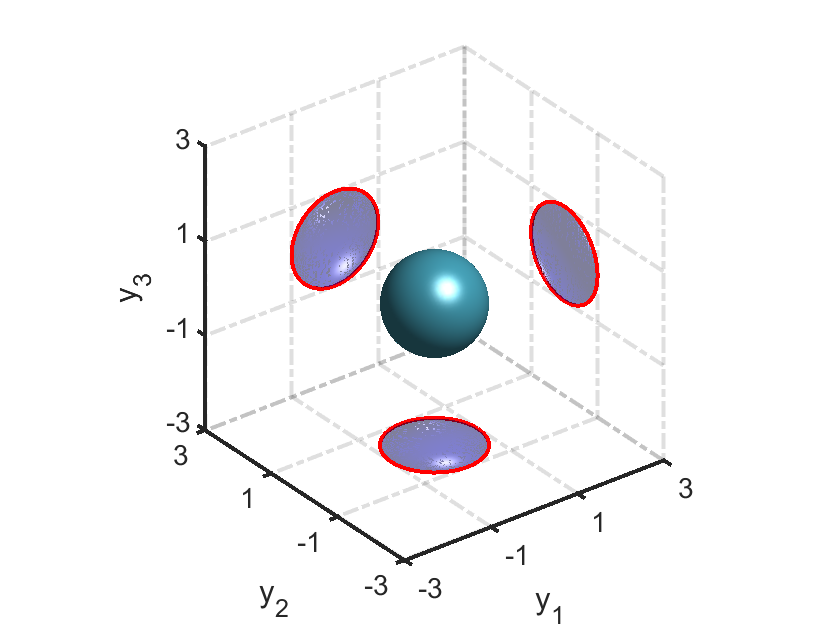}

}
\subfigure[$J=3$]{
\includegraphics[scale=0.16]{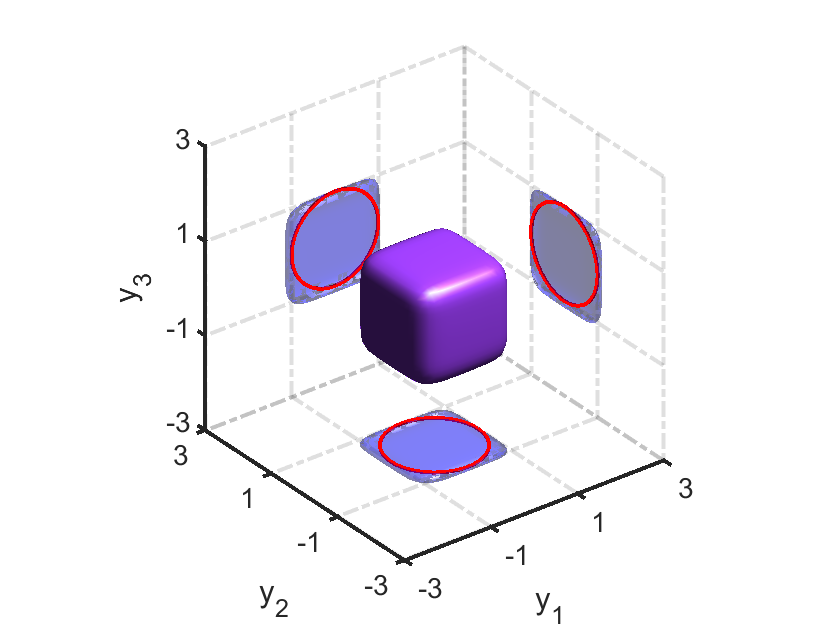}

}
\subfigure[$J=10$]{
\includegraphics[scale=0.16]{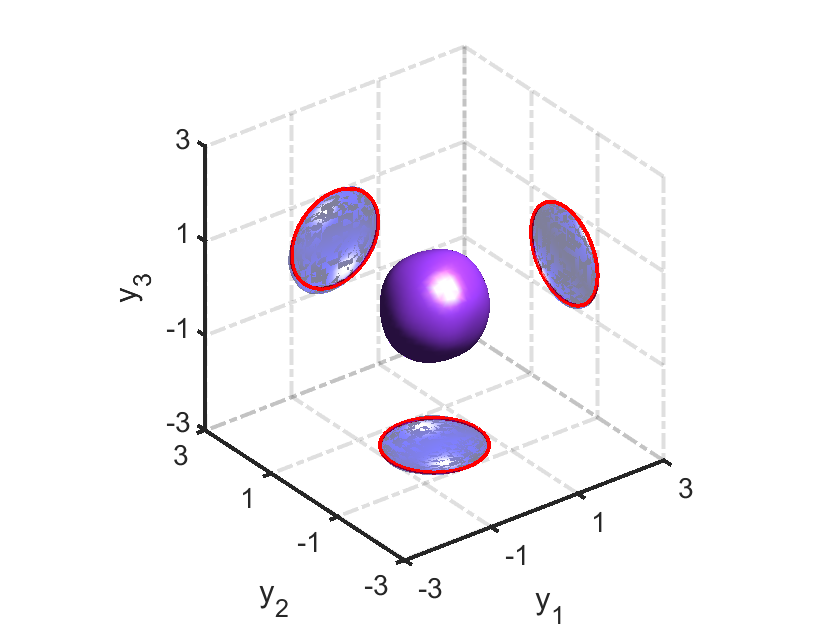}

}
\subfigure[$J=20$]{
\includegraphics[scale=0.16]{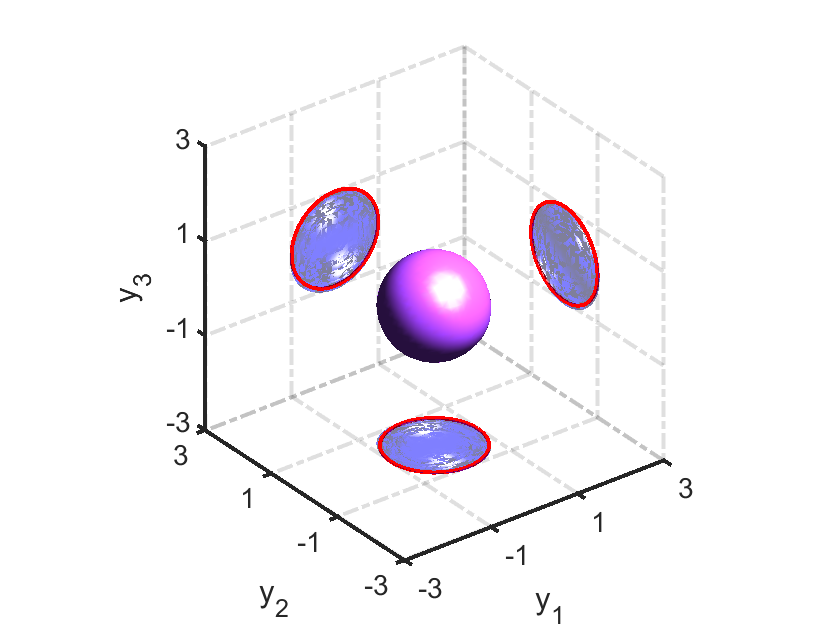}

}
\caption{Reconstructions using far-field data from multiple observation directions for a ball-shaped support centered at the origin.
} \label{fig:3-ball-ma}
\end{figure}

\begin{figure}
\centering
\subfigure[Ball-shaped support]{
\includegraphics[scale=0.16]{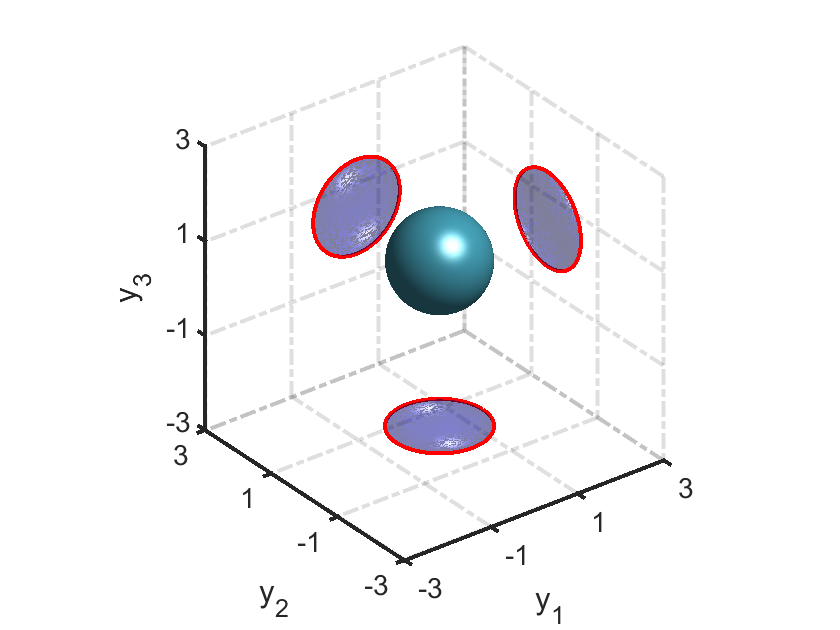}

}
\subfigure[$J=3$]{
\includegraphics[scale=0.16]{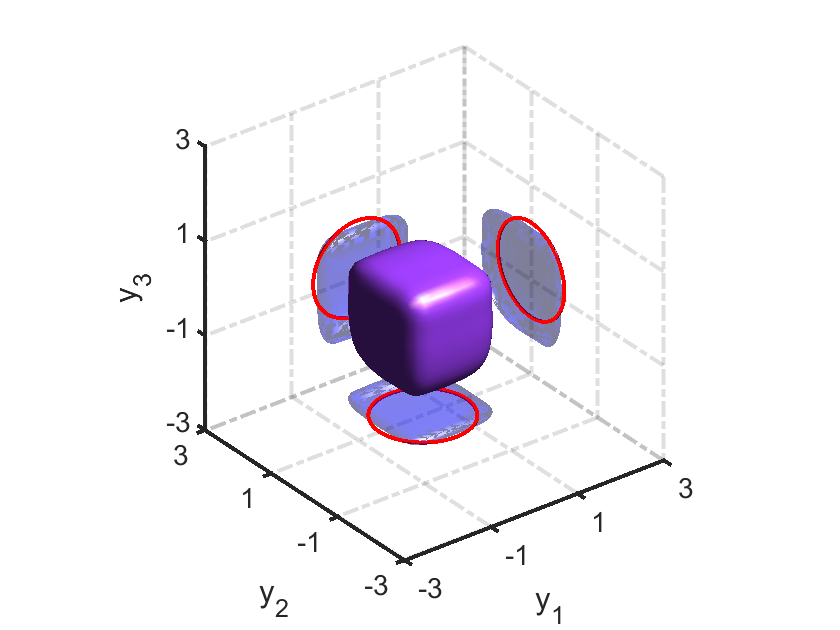}

}
\subfigure[$J=10$]{
\includegraphics[scale=0.16]{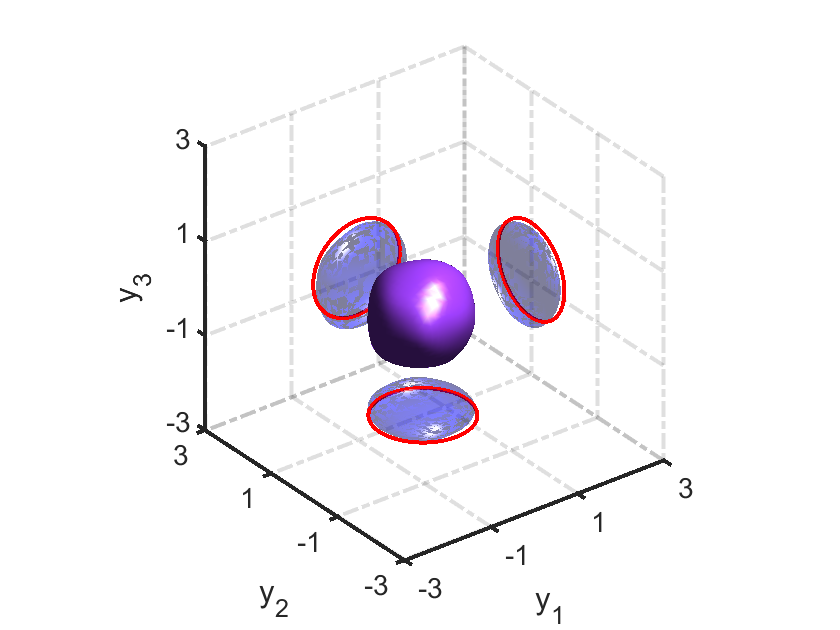}

}
\subfigure[$J=20$]{
\includegraphics[scale=0.16]{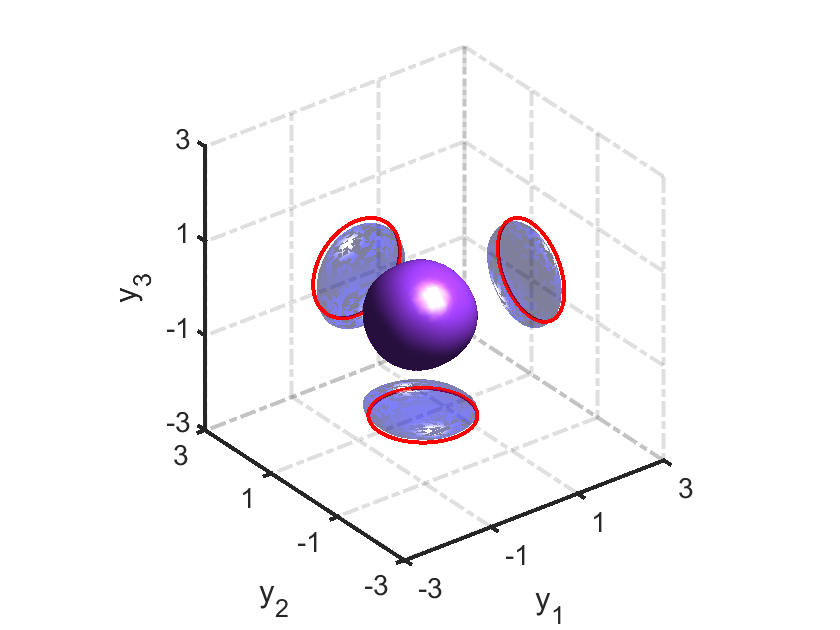}

}
\caption{Reconstructions using far-field data from multiple observation directions for a ball-shaped support with an off-origin center.
} \label{fig:3-ball-mb}
\end{figure}

\begin{figure}
\centering
\subfigure[Ellipsoidal support]{
\includegraphics[scale=0.16]{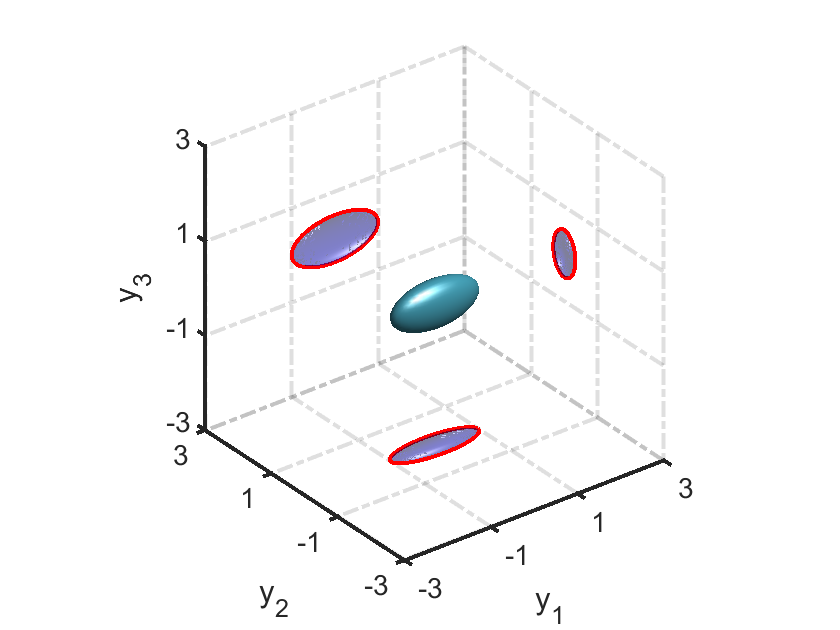}

}
\subfigure[$J=3$]{
\includegraphics[scale=0.16]{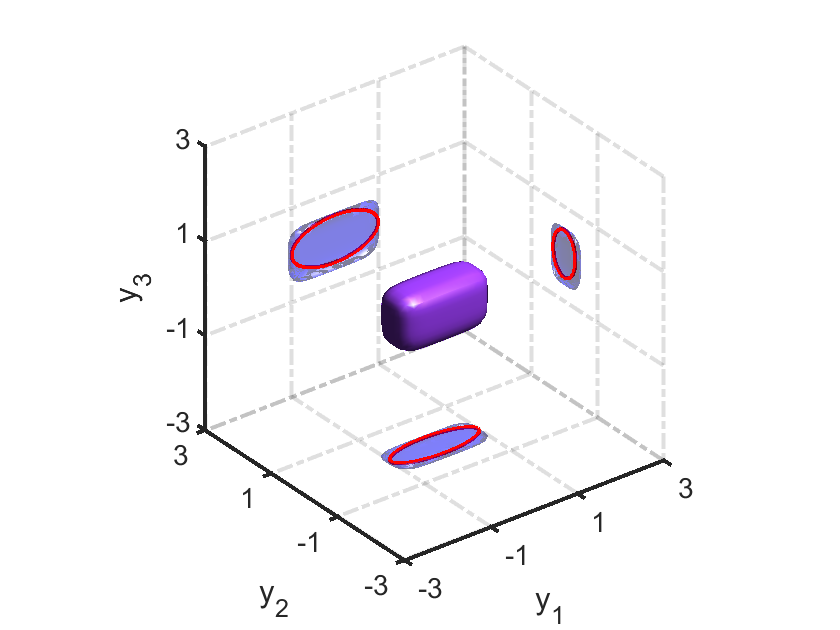}

}
\subfigure[$J=10$]{
\includegraphics[scale=0.16]{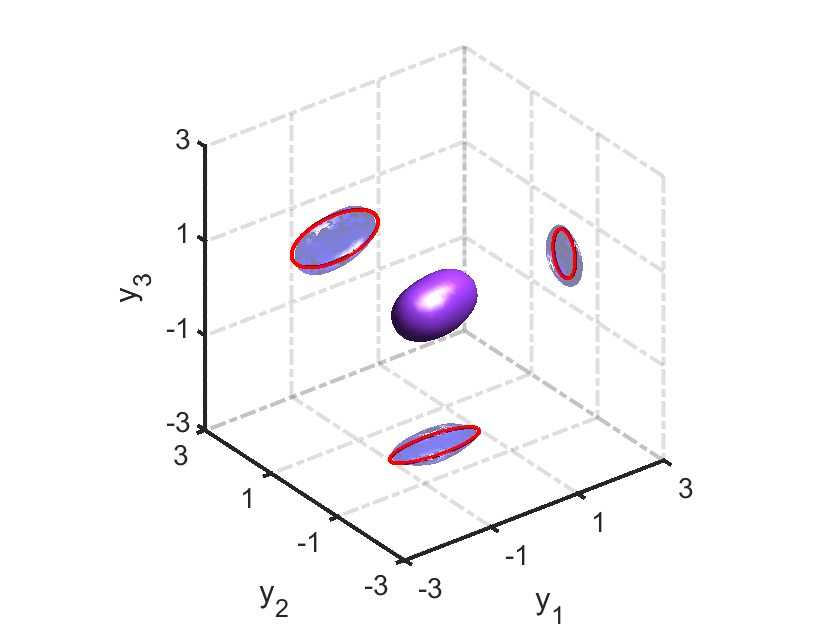}

}
\subfigure[$J=20$]{
\includegraphics[scale=0.16]{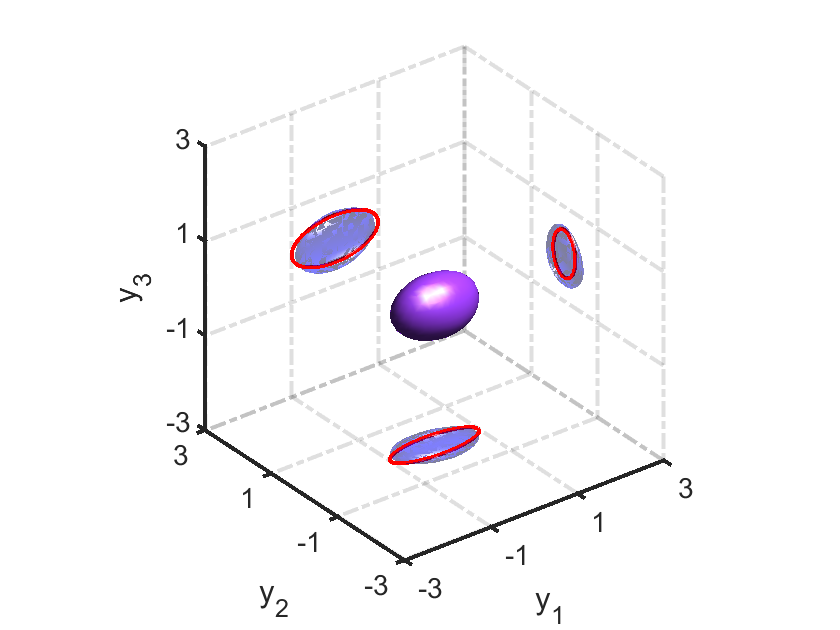}

}
\caption{Reconstructions using far-field data from multiple observation directions for an ellipsoidal support centered at the origin.
} \label{fig:3-ellip-ma}
\end{figure}

\begin{figure}
\centering
\subfigure[Ellipsoidal support]{
\includegraphics[scale=0.16]{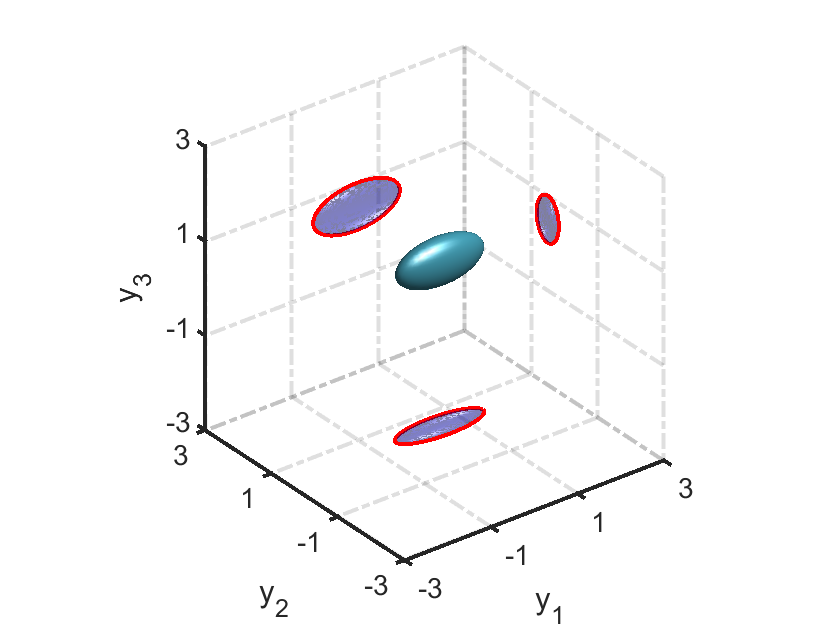}

}
\subfigure[$J=3$]{
\includegraphics[scale=0.16]{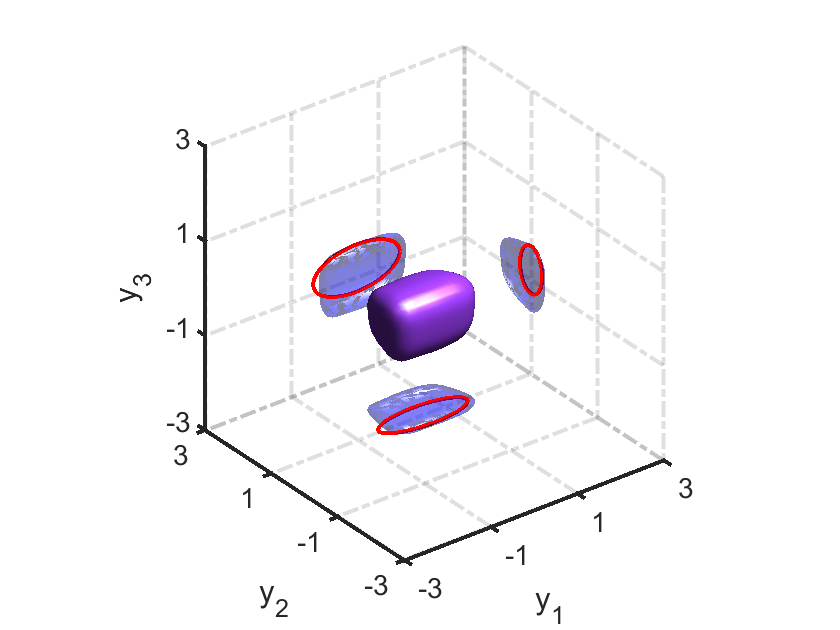}

}
\subfigure[$J=10$]{
\includegraphics[scale=0.16]{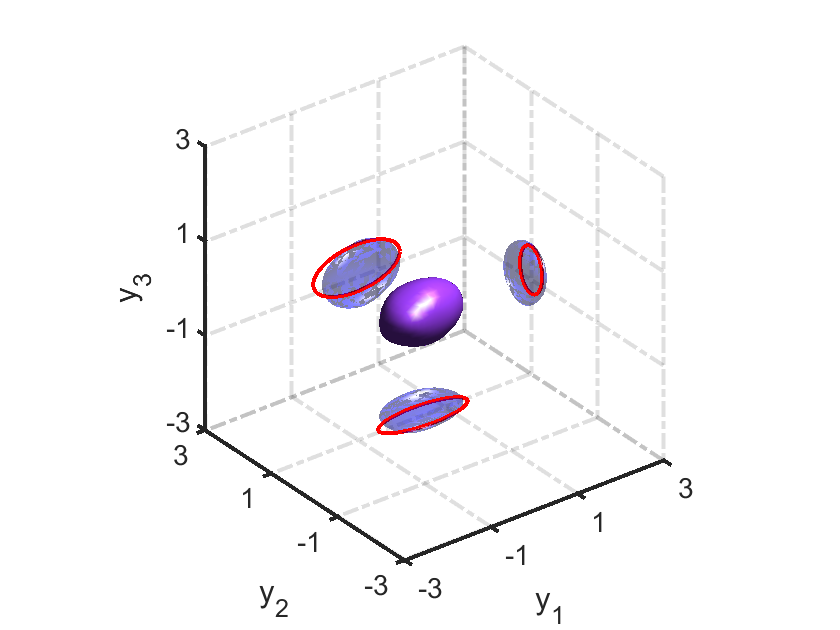}

}
\subfigure[$J=20$]{
\includegraphics[scale=0.16]{Fig_3D/ex3_set1b_m10.png}

}
\caption{Reconstructions using far-field data from multiple observation directions for an ellipsoidal support with an off-origin center.
} \label{fig:3-ellip-mb}
\end{figure}

\section{Conclusion}

In this paper, we consider the inverse problem of determining the uncontrolled noise source for the acoustic wave equation in two and three dimensions. The noise source
is a random process with compact spatial support and will vanish after a finite period of time. Therefore, it is not stationary in time.
By transforming the wave equation into the Helmholtz equation via Fourier transform, we propose an efficient factorization method
to locate the support of the random source based on the multi-frequency far-field correlation data. The proposed method only requires sparse
observation directions and has low computational cost.
Numerical examples in both two and three dimensions are presented to demonstrate the validity and effectiveness of the proposed method. Although this paper concerns the inverse random source scattering problem for the acoustic wave equation, we believe that the proposed framework and methodology can be directly applied to solve inverse random source problems for other wave equations,  such as the electromagnetic and elastic wave equations.  It is a more challenging direction to consider the inverse random medium scattering problem where the medium should be modeled as a random function. We hope to be able to report the progress on these problems in the future.

\appendix
\section{Factorization for the near-field case in three dimensions}
In this section, we will carry the factorization with multi-frequency far-field correlation data to the case with near-field measurements in three dimensions. More precisely,
assuming $D\subset B_R$, the proposed factorization method can be slightly modified to  get an image of the annular region
\[
\tilde K^{(x)}_D:=\{ y\in \mathbb R^3: \inf_{z\in D}|x-z|<|x-y|<\sup_{z\in D}|x-z|\}\subset \mathbb R^3,
\]
for every fixed measurement position $x\in \partial B_R$. Define the near-field operator $\mathcal N^{(x)}: L^2(0,\tau_w)\rightarrow L^2(0,\tau_w)$ by
\begin{eqnarray*}
    &&(\mathcal{N}\phi)(\tau)
    =(\mathcal{N}^{(x,k_0)}\phi)(\tau)   \\
    &=&\int_{0}^{\tau_w} \tilde C(x, \tau_c+\tau-\gamma,k_0)\,\phi(\gamma)\,{\rm d}\gamma, \quad {\tau \in (0,\tau_w)}, \\
    &=& \int_{0}^{\tau_w} \int_{D} \int_{t_1}^{t_2}\int_{t_1}^{t_2} F(t-s)K(y,t)K(y,s)e^{{\rm i}(k_0+\tau_c+\tau-\gamma)t}e^{-{\rm i} k_0 s} \, {\rm d} t {\rm d}s\, \frac{e^{{\rm i}(\tau_c+\tau-\gamma)|x-y|}}{4\pi|x-y|}{\rm d}y\,\phi(\gamma)\,{\rm d}\gamma
\end{eqnarray*}
where $\tilde C(x, \tau, k):= {\mathbb E}[u(x, k+\tau)\overline{u(x, k)}]$. Following the proof of Proposition \ref{Fac-F}, we obtain a factorization of the near-filed operator:
\[
\mathcal N^{(x)}=\tilde {\mathcal L} \tilde {\mathcal T} \tilde {\mathcal L}^*
\]
where
$\mathcal{\tilde L}=\mathcal{\tilde L}_D^{(x,k_0)}:  L^2(D\times[t_1,t_2])\rightarrow  L^2(0,\tau_w)$ is defined by
\begin{eqnarray*}
    (\mathcal{\tilde L}u)(\tau)=\int_{t_1}^{t_2}\int_D e^{-{\rm i} \tau (|x-y|+t)} u(y,t)\,{\rm d}y {\rm d}t,\qquad \tau\in (0, \tau_w),
\end{eqnarray*}
for all $u\in L^2(D\times[t_1,t_2])$, and the middle operator
$\mathcal{\tilde T}: L^2(D\times[t_1,t_2]) \rightarrow L^2(D\times[t_1,t_2])$ is  again a multiplication operator defined by
\begin{eqnarray}
    (\mathcal{\tilde T}\psi)(y,t):=\frac{1}{4\pi|x-y|}\int_{t_1}^{t_2} F(t-s)K(y,t)K(y,s)e^{{\rm i} k_0 (t-s)} e^{{\rm i} \tau_c (t+|x-y|)}\, {\rm d}s\, \psi(y,t).   \nonumber
\end{eqnarray}
Note the adjoint of $\tilde L:L^2(0,\tau_w)\rightarrow L^2(D\times[t_1,t_2])$ is defined by
\begin{eqnarray*}
    (\mathcal{\tilde L^*}\phi)(y,t)=\int_{t_1}^{t_2}\int_D e^{{\rm i} \gamma (|x-y|+t)} \phi (\gamma)\,{\rm d}\gamma,\qquad \phi\in L^2(0, \tau_w ).
\end{eqnarray*}
Choose the test function:
\[
\tilde \phi_{y,\epsilon}^{(x)}(\tau)=\frac{1}{(t_2-t_1)|B_\epsilon(y)|} \int _{t_1}^{t_2} \int _{B_\epsilon(y)} \frac{e^{\rm i\tau(|x-z|+t)}}{4\pi|x-z|} \,{\rm d}z{\rm d}t\,\quad \tau \in (0, \tau_w).
\]
and introduce the indicator function
\[
\tilde {\mathcal I}^{(x)}(y):=\left[\sum_{n=1}^{n=\infty}\frac{|\langle \tilde \phi^{(x)}_{y,\epsilon}, \tilde \psi^{(x)}_n\rangle|^2_{L^2(0,\tau_w)}}{|\tilde \lambda_n^{(x)}|}\right]^{-1}, \quad y \in \mathbb R^3,
\]
where  $(\tilde \lambda_n^{(x)}, \tilde \psi^{(x)}_n)$ is an eigensystem of the near-field operator $\mathcal N^{(x)}_\#$. As a counterpart to $\mathcal I$, we have the following result  for the near-field case.
\begin{corollary}
(i) $\mathrm{Range}[ (\mathcal {N}^{(x)})_\#^{1/2}]=\rm {Range}( \mathcal{\tilde L}^{(x)})$ for all $x\in \partial B_R$.\\
(ii)If $y\in\overline{\tilde K^{(x)}_D}$, the test function $\tilde \phi _{y,\epsilon}^{(x)}\in \mathrm{Range}(\tilde {\mathcal L}^{(x)})$. If $y\notin\overline{\tilde K^{(x)}_D}$, the test function $\tilde \phi _{y,\epsilon}^{(x)}\notin \mathrm{Range}(\tilde {\mathcal L}^{(x)})$.Moreover, it holds that $\tilde {\mathcal I} ^{(x)}(y)>0 $ if $y\in \overline{\tilde K^{(x)}_D}$ and $\tilde {\mathcal I} ^{(x)}(y)=0 $ if $y\notin \overline{\tilde K^{(x)}_D}$.
\end{corollary}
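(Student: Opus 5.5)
The proof follows the far-field analysis step by step. Part (i) comes from Proposition \ref{app:range}, and part (ii) is the near-field analogue of Lemma \ref{lem:test} combined with Picard's theorem, as in Theorem \ref{thm:indicator}.

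\textbf{Step 1: the factorization.} First I verify $\mathcal N^{(x)}=\tilde{\mathcal L}\tilde{\mathcal T}\tilde{\mathcal L}^*$ by the same Fubini computation as in Proposition \ref{Fac-F}. Plugging $\tilde{\mathcal L}^*\phi$ into $\tilde{\mathcal T}$ and then applying $\tilde{\mathcal L}$ collects the phases into $e^{\mathrm{i}(k_0+\tau_c+\tau-\gamma)t}e^{-\mathrm{i}k_0 s}e^{\mathrm{i}(\tau_c+\tau-\gamma)|x-y|}$. This requires fixing consistent sign conventions in $\tilde{\mathcal L}$ and $\tilde{\mathcal L}^*$ so that the phase reads $\pm\tau(|x-y|-t)$ or the equivalent; this is routine bookkeeping.

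Compactness of $\tilde{\mathcal L}$ is immediate, since its kernel is smooth and bounded on $(0,\tau_w)\times D\times[t_1,t_2]$ (recall $x\in\partial B_R$ and $D\subset B_R$, so $|x-y|$ is bounded away from $0$). Dense range is equivalent to injectivity of $\tilde{\mathcal L}^*$. If $\tilde{\mathcal L}^*\phi=0$ on $D\times[t_1,t_2]$, then the one-dimensional Fourier transform of $\phi$ (extended by zero) vanishes on the open set of values $\{|x-y|-t\}$. Since $\phi$ has compact support, this transform is entire, so $\phi=0$. This is the same argument as \cite[Lemma 2.1]{GH2024}.

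\textbf{Step 2: coercivity of the middle operator.} $\tilde{\mathcal T}$ is multiplication by
\[
m(y,t)=\frac{K(y,t)}{4\pi|x-y|}\int_{t_1}^{t_2}F(t-s)K(y,s)e^{\mathrm{i}k_0(t-s)}\,\mathrm{d}s\;e^{\mathrm{i}\tau_c(t+|x-y|)}.
\]
Repeating the argument in Theorem \ref{thm:range} on $D\setminus\overline Y_\epsilon$:
\begin{itemize}
\item use $|\operatorname{Re}m|+|\operatorname{Im}m|\ge|m|$;
\item by (H2), replace $K(y,s)$ by $|K(y,s)|$ inside the modulus;
\item shift the phase by $s_m$ and use (H3) to get $\cos k_0(s-s_m)\ge\sin(\delta_0/2)$;
\item use (H1) for $F\ge F_{\min}$.
\end{itemize}
The extra factor $1/(4\pi|x-y|)$ is bounded below by $1/(8\pi R)$ because $|x-y|\le 2R$. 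This gives $|m|\ge c_0>0$, so $\tilde{\mathcal T}_{\epsilon,\#}$ is coercive. Proposition \ref{app:range} then gives $\mathrm{Range}(\mathcal N^{(x)}_{\epsilon,\#})^{1/2}=\mathrm{Range}(\tilde{\mathcal L}_\epsilon)$. Letting $\epsilon\to0$ exactly as in \eqref{range-fl} yields (i).

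\textbf{Step 3: the test functions.} For (ii), I reduce the range test to a one-dimensional problem. Set $\rho=|x-z|$ and write $\tilde{\mathcal L}u(\tau)=\int e^{-\mathrm{i}\tau r}\,(\text{pushforward of }u)(r)\,\mathrm{d}r$, with $r=\rho+t$ ranging over an interval determined by $[\inf_D|x-z|,\sup_D|x-z|]$ and $[t_1,t_2]$.

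If $y\in\tilde K_D^{(x)}$, then for small $\epsilon$ the set $B_\epsilon(y)\times[t_1,t_2]$ has its $r$-values inside the range swept by $D\times[t_1,t_2]$. I then construct $u$ supported on $(D\cap\{\text{shell near }|x-y|\})\times[t_1,t_2]$ with the same pushforward density as the averaged kernel of $\tilde\phi_{y,\epsilon}$, including the weight $1/(4\pi|x-z|)$, which is a bounded positive function of $r$. This is exactly the construction of Lemma \ref{lem:test}, with $\hat x\cdot z$ replaced by $|x-z|$ (the spherical shells play the role of the planes).

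If $y\notin\overline{\tilde K_D^{(x)}}$, suppose $\tilde\phi_{y,\epsilon}=\tilde{\mathcal L}u$. Both sides are restrictions to $(0,\tau_w)$ of entire functions of $\tau$, namely Fourier transforms of compactly supported densities in $r$. Analytic continuation makes them equal for all $\tau$, and Fourier inversion then forces the densities to coincide. This is a contradiction, because the $\rho$-supports $[|x-y|-\epsilon,|x-y|+\epsilon]$ and $[\inf_D|x-z|,\sup_D|x-z|]$ differ.

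\textbf{Step 4: the indicator.} The statement about $\tilde{\mathcal I}^{(x)}$ follows from Picard's theorem and (i), verbatim as in Theorem \ref{thm:indicator}.

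\textbf{Main obstacle.} I expect the hardest step to be the exclusion argument for $y$ outside the annulus. The convolution in time smears the $\rho$-support by $[t_1,t_2]$, so a support comparison in the combined variable $r=\rho+t$ only separates the two densities when $y$ is far enough out. To handle this, I would first try to exploit that the time parts of $\tilde\phi_{y,\epsilon}$ and of admissible $u$ can be disentangled, as done in \cite{GH2024}. Failing that, I would accept the conclusion in the enlarged-annulus sense that \cite{GH2024} actually proves.
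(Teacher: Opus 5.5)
Your Steps 1, 2 and 4 follow the route the paper intends. The paper gives no separate proof of this corollary; it only points back to Proposition~\ref{Fac-F}, Theorem~\ref{thm:range}, Lemma~\ref{lem:test} and Picard's theorem, and your handling of the extra factor $1/(4\pi|x-y|)\ge 1/(8\pi R)$ is correct.

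\textbf{The exclusion half of Step 3 is a genuine gap.} Saying ``the $\rho$-supports differ'' gives no contradiction, because $u$ need not factor in $(z,t)$. You notice this, but the obstacle you then describe is not real, and falling back to an enlarged annulus would not prove the statement. No disentangling is needed. The test function averages over the same full window $[t_1,t_2]$ as the source, so both densities in $r$ are smeared by the same shift, and comparing a single endpoint is enough.

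Put $\rho_{\min}=\inf_{z\in D}|x-z|$ and $\rho_{\max}=\sup_{z\in D}|x-z|$. Let $\mu_u$ be the pushforward of $u$ under $(z,t)\mapsto |x-z|+t$; it vanishes outside $[\rho_{\min}+t_1,\rho_{\max}+t_2]$. The test function is $\tilde\phi^{(x)}_{y,\epsilon}(\tau)=\int e^{\mathrm i\tau r}\nu_y(r)\,\mathrm dr$ with
\[
\nu_y(r)=\frac{1}{(t_2-t_1)|B_\epsilon(y)|}\int_{r-t_2}^{r-t_1}\sigma_y(\rho)\,\mathrm d\rho .
\]
Here $\sigma_y$, the $\rho$-density of $\chi_{B_\epsilon(y)}(z)/(4\pi|x-z|)$, is positive on $(|x-y|-\epsilon,|x-y|+\epsilon)\cap(0,\infty)$. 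Hence $\nu_y>0$ on a left neighbourhood of $|x-y|+\epsilon+t_2$.

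If $|x-y|>\rho_{\max}$, that neighbourhood can be taken inside $(\rho_{\max}+t_2,\,|x-y|+\epsilon+t_2)$, where every $\mu_u$ vanishes. This contradicts $\nu_y=\mu_u$, which your analytic-continuation step gives. The case $|x-y|<\rho_{\min}$ is symmetric at the lower endpoint $\max\{0,|x-y|-\epsilon\}+t_1<\rho_{\min}+t_1$. The argument works for every $\epsilon>0$.

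\textbf{The statement overclaims on the boundary of the annulus.} The same comparison shows more than you use. Whenever $\epsilon>\min\{|x-y|-\rho_{\min},\,\rho_{\max}-|x-y|\}$ one has $\tilde\phi^{(x)}_{y,\epsilon}\notin\mathrm{Range}(\tilde{\mathcal L}^{(x)})$, and hence $\tilde{\mathcal I}^{(x)}(y)=0$. This covers every $\epsilon>0$ when $|x-y|\in\{\rho_{\min},\rho_{\max}\}$. So the inclusion half of (ii) and the positivity of $\tilde{\mathcal I}^{(x)}$ hold only on the open annulus $\tilde K^{(x)}_D$, and only for $\epsilon$ small depending on $y$. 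They do not hold on $\overline{\tilde K^{(x)}_D}$ as stated. Your Step 3 correctly works with the open annulus, but the ``verbatim'' transfer in Step 4 from Theorem~\ref{thm:indicator}, which has the same defect, overclaims.

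\textbf{Sign in Step 1.} The phase is $\tau(|x-y|+t)$, not $\tau(|x-y|-t)$. With the paper's kernel $e^{-\mathrm i\tau(|x-y|+t)}$ for $\tilde{\mathcal L}$, the product $\tilde{\mathcal L}\tilde{\mathcal T}\tilde{\mathcal L}^*$ carries $e^{\mathrm i(\tau_c-\tau+\gamma)(t+|x-y|)}$ instead of $e^{\mathrm i(\tau_c+\tau-\gamma)(t+|x-y|)}$. You must use $e^{+\mathrm i\tau(|x-y|+t)}$ throughout, including in Step 3 where you wrote $e^{-\mathrm i\tau r}$. With the minus sign, matching the test function would force $\nu_y(r)=\mu_u(-r)$. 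The two sides live on $r>0$ and $r<0$, so no test function could ever lie in the range.

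\textbf{The $\epsilon\to0$ limit in Step 2.} This passage is not a proof in the paper either: norm convergence of $\mathcal N_\epsilon$ does not carry range identities over. So your argument establishes (i) only when $|K|$ is bounded below on all of $D\times[t_1,t_2]$, in which case no approximation is needed.
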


\end{document}